\documentclass[11pt,reqno]{amsart}
\usepackage[T1]{fontenc}
\usepackage{lmodern,amsmath,amssymb,mathtools,booktabs,array,microtype}
\usepackage[margin=1.05in]{geometry}
\usepackage{xcolor,mathrsfs}
\usepackage[colorlinks=true,linkcolor=blue!50!black,citecolor=blue!50!black,urlcolor=blue!50!black]{hyperref}
\hypersetup{pdftitle={A Proof of the Third and Cubic Borwein Conjectures},pdfauthor={Ma Yicen}}
\newtheorem{theorem}{Theorem}[section]
\newtheorem{lemma}[theorem]{Lemma}
\newtheorem{proposition}[theorem]{Proposition}
\newtheorem{corollary}[theorem]{Corollary}
\theoremstyle{definition}

\theoremstyle{remark}

\numberwithin{equation}{section}
\newcommand{\ii}{\mathrm i}
\newcommand{\Li}{\operatorname{Li}_2}
\newcommand{\Rea}{\operatorname{Re}}
\newcommand{\Ima}{\operatorname{Im}}
\newcommand{\TV}{\operatorname{TV}}
\newcommand{\Var}{\operatorname{Var}}
\newcommand{\dist}{\operatorname{dist}}

\newcommand{\ro}{\mathcal R_{\mathrm o}}
\newcommand{\rc}{\mathcal R_{\mathrm c}}
\newcommand{\Br}{\mathcal B_r}
\newcommand{\Gr}{\mathcal G_r}
\newcommand{\norm}[1]{\left\lVert#1\right\rVert}

\newcommand{\epsd}{\varepsilon_d}
\newcommand{\NN}{\mathbb N}
\newcommand{\RR}{\mathbb R}
\newcommand{\QQ}{\mathbb Q}
\newcommand{\EE}{\mathbb E}
\newcommand{\dd}{\,\mathrm d}
\newcommand{\sg}{\succeq}
\title[Third and Cubic Borwein Conjectures]{A Proof of the Third and Cubic Borwein Conjectures}
\author{Ma Yicen}
\address{College of Chemistry, Zhejiang University, Hangzhou, China}
\date{October 6, 2026}
\subjclass[2020]{Primary 05A30; Secondary 05A17, 41A60}
\keywords{Third Borwein conjecture, Cubic Borwein conjecture, coefficient signs, Rogers--Ramanujan identities, saddle point method, Dedekind eta function}
\newcommand{\eps}{\varepsilon}
\begin{document}
\begin{abstract}
We establish the coefficient sign patterns in the Third Borwein conjecture
and the modulus-three Cubic Borwein conjecture. The analytic arguments apply
for $n\ge1750$ and $n\ge500$, respectively. They combine exact dissections
and positive coefficient identities near the boundary with saddle point
estimates that preserve cancellation between primitive-root contributions.
Paired Gaussian estimates remove the leading odd error, and explicit
remainder bounds cover the complementary contours. The remaining finite
intervals are checked by exact integer arithmetic. The Third finite
verification through $1749$ is author-confirmed; the Cubic verification
through $500$ is supported by two complete integer implementations and
additional coefficient crosschecks.
\end{abstract}
\maketitle

\section{The two sign patterns}\label{sec:overview}
For a positive integer $n$, define
\begin{align}
 S_n(q)&=\prod_{j=1}^n\prod_{s=1}^4(1-q^{5j-s})
       =\sum_{d=0}^{10n^2}a_d(n)q^d,\label{eq:overview-third}\\
 P_n(q)^3&=\prod_{j=1}^n(1-q^{3j-2})^3(1-q^{3j-1})^3
       =\sum_{d=0}^{9n^2}b_d(n)q^d.\label{eq:overview-cubic}
\end{align}
The first product is the one in the Third Borwein conjecture; the second
is the one in the Cubic Borwein conjecture. In each case the prediction is
that the coefficients in residue zero are nonnegative and all other
coefficients are nonpositive. The modulus is five for $S_n$ and three for
$P_n^3$. Our main result is the following.

\begin{theorem}\label{thm:both}
For every positive integer $n$,
\begin{align*}
 a_d(n)&\ge0&&\text{if }d\equiv0\pmod5, &
 a_d(n)&\le0&&\text{otherwise},\qquad 0\le d\le10n^2,\\
 b_d(n)&\ge0&&\text{if }d\equiv0\pmod3, &
 b_d(n)&\le0&&\text{otherwise},\qquad 0\le d\le9n^2.
\end{align*}
\end{theorem}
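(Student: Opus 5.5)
We follow the route indicated in the abstract: an exact dissection of each product by residue class, a saddle point analysis in the bulk of the coefficient range, positive identities near the ends, and a finite computer check for small $n$.

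\textbf{Step 1: dissection and symmetry.} For $S_n$ we write $S_n(q)=\sum_{r=0}^{4}q^{r}A_r(q^5)$, and for $P_n^3$ we write $P_n(q)^3=\sum_{r=0}^{2}q^{r}B_r(q^3)$. The theorem then says that $A_0$ and $B_0$ have nonnegative coefficients and every other $A_r$, $B_r$ has nonpositive coefficients. Both products are self-reciprocal up to sign: $q^{10n^2}S_n(1/q)=S_n(q)$, and similarly for $P_n^3$. So it is enough to treat $d$ up to half the degree.

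Each component is extracted by a root-of-unity filter,
\[
a_d(n)=\frac{1}{2\pi\ii}\oint S_n(z)\,z^{-d-1}\dd z .
\]
The main contributions to this integral come from the points $z$ near $\rho\,\zeta$, where $\zeta$ runs over the fifth (respectively cube) roots of unity. At $\zeta=1$ the product is tiny. At the primitive roots, the product over complete residue blocks collapses. For example, $\prod_{s=1}^{4}(1-\zeta^{-s}x)=(1-x^5)/(1-x)$ evaluated suitably, so near $\zeta$ the logarithm of $|S_n|$ is of order $n$ times a dilogarithm expression of the form $\Li(\cdot)$.

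\textbf{Step 2: small $d$.} When $d$ is bounded by a multiple of $n$, the coefficient $a_d(n)$ agrees with the corresponding coefficient of the infinite product $\prod_{j\ge1}\prod_s(1-q^{5j-s})$. That infinite product has a known sign-respecting dissection through the Rogers--Ramanujan and Jacobi triple product identities; in the cubic case it is an eta quotient with known positive $3$-dissection. We will prove the sign pattern in this boundary range from these exact identities, together with an explicit bound on the correction coming from the terms $j>n$.

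\textbf{Step 3: bulk range.} For $d=\delta n^2$ with $\delta$ bounded away from $0$ and from the centre, we apply the saddle point method on the circle $|z|=\rho(\delta)$. The contributions of the conjugate primitive roots $\zeta$ and $\bar\zeta$ must be combined before any absolute value is taken. Their sum is $2\Rea\bigl(\zeta^{-d}\,\cdot\,\text{Gaussian}\bigr)$, and its sign is dictated by the residue of $d$. The leading odd error term cancels when the two Gaussian estimates are paired. The remaining arcs, near $\zeta=1$ and between the roots, are bounded explicitly by $\int|S_n|$ and shown to be exponentially smaller. Tracking constants through all these estimates gives thresholds $n_0$, which we expect to be about $1750$ and $500$.

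\textbf{Step 4: finite check.} For $n<n_0$ all coefficients are computed by exact integer polynomial multiplication, and the signs are checked directly.

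\textbf{Main obstacle.} The hardest part is Step 3. There is a uniformity problem near the transition $d\asymp n$, where the saddle approaches $1$ and the Gaussian approximation degenerates. There is also a problem near the middle of the range, where $\Rea(\zeta^{-d}\cdots)$ could be small for certain residues. We would first try to make the Step 2 range overlap the Step 3 range by using explicit remainder bounds. If that fails, we would fall back on a local expansion near the boundary using modular (eta) transformation bounds.
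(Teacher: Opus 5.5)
Your outline has the same architecture as the paper: reciprocity, exact dissections near the boundary, saddle points on circles $|q|=e^{-t/n}$ around the primitive roots, complementary arcs, and a finite check. It has a genuine gap in exactly the residue classes that make these conjectures hard, namely $r=3,4$ modulo $5$ and $r=2$ modulo $3$. In these classes the infinite product has identically zero coefficients, since $S_\infty(q)=\mathcal U(q^5)-q\mathcal V(q^5)-q^2\mathcal W(q^5)$ and $P_\infty(q)^3=\mathcal A(q^3)-q\mathcal B(q^3)$. So Step 2 (``agreement with the infinite product plus a bound on the correction from $j>n$'') gives nothing beyond $d\le5n$ (resp.\ $3n$). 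There the whole coefficient \emph{is} the tail correction, and it must be shown sign-definite, not merely small. The paper gets the sign from a positivity identity for the first tail layer. For the Third case this is $a_{5M+r}(n)=-[Q^{M-n}]\mathcal C$ for $n\le M\le2n-1$, with $\mathcal C=(\mathcal V+\mathcal W-\mathcal U)/(1-Q)=\sum_j Q^{2j(j+1)}F_{2j+1}F_{2j+2}\succeq0$. For the Cubic case it is $b_{3m+2}=-3\mathcal C_{m-n}+E_{n,d}$, with $\mathcal C=(\mathcal B-\mathcal A)/(1-Q)$ made positive by pairing the lattice points $(u,v)$ and $(-u,-v)$. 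The paper then controls the higher layers to push the exact range to $15n-2$ (resp.\ $30n$). In the Third case this is an injection argument bounding the second layer $Q^{2n}h_2$; in the Cubic case it is explicit lower bounds for $\mathcal C_s$ against all later layers.

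The same vanishing undermines your Step 3 claim that the combined root contributions have a sign ``dictated by the residue of $d$.'' With four primitive fifth roots you have two conjugate pairs of equal modulus. The folded amplitude $\Gamma_r(z)=\sum_h\zeta^{-hr}\lambda_h(z)$, built from Euler--Maclaurin endpoint factors $\lambda_h$ that you never compute, satisfies $\mathcal G_3(0)=\mathcal G_4(0)=0$; in the cubic case $\Gamma_2=-e^{-3z}H(z)$. So for these classes the peaks cancel down to relative size $x=e^{-5t}$ as the saddle moves toward the \emph{boundary}. At the centre, which you single out as the danger, every $|\Gamma_r|$ is bounded away from $0$. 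Getting a sign therefore requires:
\begin{itemize}
\item a quantitative amplitude lemma such as $\Gamma_3\le-3x/4$ and $\Gamma_4\le-7x/10$;
\item single-peak errors that keep their factor $x$ until after the division by the amplitude;
\item complementary-arc bounds strong enough to absorb the extra factor $e^{5t}$ lost in that division, which is what caps $t\le\alpha\sqrt n$;
\item the exact factorization $\Gamma_r(z)=-e^{-5z}\mathcal B_r(e^{-5z})$ with $\mathcal B_r(0)=1$. This moves $e^{5\ii v}$ into the exponent and shifts the saddle equation to $d-5n=n^2\beta(t)$ (resp.\ $d-3n=3n^2\beta(t)$). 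With the unshifted saddle, an oscillating linear term survives.
\end{itemize}
Finally, your bulk range ($\delta$ bounded away from $0$ and from the centre) leaves both $d\asymp n$ and a neighbourhood of the centre uncovered, and for the former you only list fallbacks. The paper instead runs the saddle analysis uniformly from $t=0$ up to $t=\alpha_0\sqrt n$ (resp.\ $\alpha\sqrt n$). This makes the analytic range start below $4n$ (resp.\ $14.8n$ in the shifted classes), so it overlaps the exact boundary.
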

\begin{proof}
The assertions follow from Corollaries~\ref{cor:full} and
\ref{cub:cor:full}, proved below by combining the analytic estimates
with the exact finite verifications.
\end{proof}

Both arguments retain cancellation between primitive-root contributions
before estimating local errors. Exact positive coefficient identities
cover the boundary, paired Gaussian estimates control the saddle region,
and explicit contour estimates bound its complement.

Parts~I and~II give the Third and Cubic analytic proofs. The Third finite
verification is described in Part~I, and the two Cubic computations in
Part~III. The appendices specify the rational bounds and auxiliary identities.

Notation is local to the analytic parts: $D(X)=1+X+X^2+X^3+X^4$ in
Part~I and $D(X)=1+X+X^2$ in Part~II. The functions $A,\beta,V$ and the
constants $c,\kappa$ are defined separately in those parts. All terminating
decimals used as bounds denote exact rational numbers.

\clearpage
\part{The Third Borwein conjecture}
\section{The Third analytic theorem}
For $n\in\NN$, put
\begin{equation}\label{eq:polynomial}
 S_n(q)=\frac{(q;q)_{5n}}{(q^5;q^5)_n}
       =\prod_{j=1}^n\prod_{s=1}^4(1-q^{5j-s})
       =\sum_{d=0}^{10n^2}a_d(n)q^d.
\end{equation}
Here $(a;q)_m=\prod_{j=0}^{m-1}(1-aq^j)$.
The third Borwein conjecture, recorded by Andrews \cite{Andrews}, predicts
\begin{equation}\label{eq:sign}
 a_{5m}(n)\ge0,\qquad a_{5m+r}(n)\le0\quad(1\le r\le4).
\end{equation}
Zero coefficients are allowed in either sign class.
We write $\epsd=1$ if $5\mid d$, and $\epsd=-1$ otherwise.

Wang and Krattenthaler \cite[Section 11]{WangKrattenthaler} identified the
four primitive fifth roots of unity as the relevant approximate saddle points.
They also isolated the difficulty in residue classes $3$ and $4$:
the corresponding coefficients of the infinite product vanish identically,
so the sum of the dominant peaks has a smaller scale than an individual peak.
Related coefficient-sign problems and further conjectures are discussed in
\cite{BerkovichDhar}.
Our estimates preserve this cancellation before bounding the local errors.

\begin{theorem}[Third analytic theorem]\label{thm:main}
For every integer $n\ge1750$ and every $0\le d\le10n^2$,
\begin{equation}\label{eq:main}
 \epsd a_d(n)\ge0.
\end{equation}
\end{theorem}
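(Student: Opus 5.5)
We will write $a_d(n)$ as a Cauchy integral over the circle $|q|=1$ and split the range of $d$. By the symmetry $S_n(q)=q^{10n^2}S_n(1/q)$ we have $a_d(n)=a_{10n^2-d}(n)$, and $10n^2\equiv0\pmod5$ preserves the residue class. So it suffices to treat $0\le d\le5n^2$. That range splits into a boundary regime, $d\le\delta n^2$ for a small fixed $\delta$ (or $d=O(n)$ at first), and a bulk regime, where a saddle point analysis applies.

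\textbf{Boundary regime.} For small $d$ the coefficient $a_d(n)$ agrees with the $q^d$ coefficient of the infinite product $(q;q)_\infty/(q^5;q^5)_\infty$ as long as $d<5n+1$. We will use the exact $5$-dissection of this product through Rogers--Ramanujan type identities. Writing
\[
\frac{(q;q)_\infty}{(q^5;q^5)_\infty}=\sum_{r=0}^4 q^r F_r(q^5),
\]
each $F_r$ is a quotient of theta-type products, with $\pm F_r$ having nonnegative coefficients. The components $F_3,F_4$ vanish at the relevant order (this is the Wang--Krattenthaler degeneracy). To reach beyond $d\approx5n$, we will use the finite analogue: a positive-coefficient identity expressing $\epsd a_d(n)$, for $d$ up to a small multiple of $n^2$, as a sum of manifestly nonnegative $q$-binomial expressions. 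Here the truncation $(q;q)_{5n}$ is handled by a Bressoud/Andrews style finite identity.

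\textbf{Bulk regime.} Write
\[
a_d(n)=\frac{1}{2\pi\ii}\oint S_n(q)q^{-d-1}\dd q
\]
on $|q|=e^{-\tau}$, with $\tau$ chosen at the saddle. The integrand peaks near $q=\zeta e^{-\tau}$ with $\zeta$ a primitive fifth root of unity. Near $\zeta$ we will use $\log S_n$ expressed through $\Li$ (Euler--Maclaurin on $\sum\log(1-q^{5j-s})$) with explicit error terms. Summing over the four roots gives an amplitude of the form $\sum_\zeta \zeta^{-d}A(\zeta)e^{n\phi}$ times a Gaussian. The main term equals $\epsd$ times a positive quantity for residues $1$ and $2$, and trivially so for residue $0$.

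For residues $3$ and $4$ the leading peaks cancel. We will therefore pair the contributions at $\zeta$ and $\bar\zeta$ and expand the Gaussian to the next order. The odd (skew) term in the paired expansion then cancels, and the surviving second-order term carries the correct sign. This step is where the cancellation must be preserved: we estimate the sum of the two local integrals, never each one separately.

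The complement of the saddle neighbourhoods (arcs away from the fifth roots, including near $1$, where $S_n$ is tiny) will be bounded by explicit $\TV$/log-modulus estimates. These show $|S_n|$ there is smaller than the main term by $e^{-cn}$.

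\textbf{Expected obstacle and conclusion.} The main obstacle is making the residue-$3,4$ paired estimate fully explicit with constants good enough to close at $n=1750$. This requires rational bounds on the third-order Taylor remainder of the phase, uniformly in $d/n^2\in[\delta,5]$. It also requires matching the boundary identity's range of validity with $\delta$. I would first try to push the positivity identity as far as possible in $d$, so that the saddle estimate only needs $d/n^2$ bounded away from $0$. Combining the boundary regime, the bulk regime, and the symmetry then gives $\epsd a_d(n)\ge0$ for all $0\le d\le10n^2$ and all $n\ge1750$.
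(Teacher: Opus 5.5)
There is a genuine gap in your boundary regime. No positive-coefficient identity is available that expresses $\epsd a_d(n)$ through manifestly nonnegative $q$-binomial terms for $d$ up to $\delta n^2$. The finite Bressoud/Andrews-type expansions are alternating sums, and their sign pattern is exactly what is at stake, so this step presupposes what it is meant to prove.

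What can be proved exactly covers a much shorter range. From $S_n=S_\infty T_n$, the dissection $S_\infty=\mathcal U(q^5)-q\mathcal V(q^5)-q^2\mathcal W(q^5)$ gives the signs for $d\le5n$. For residues $3,4$, three ingredients extend this only to $d<15n$: the first two tail layers of $T_n$, the positive series $\mathcal C=(\mathcal V+\mathcal W-\mathcal U)/(1-Q)=\sum_jQ^{2j(j+1)}F_{2j+1}F_{2j+2}$, and an injection argument.

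The saddle-point analysis must therefore itself reach $d$ of order $n$. That means radii $e^{-t/n}$ with $t$ up to order $\sqrt n$; the paper uses $t\le\alpha\sqrt n$ with $\alpha^2=5c/49$ for residues $3,4$. It cannot stop at $d/n^2\ge\delta$. Nor could $\delta$ be small at $n=1750$ in your scheme. Relative to one peak, the folded main term in residues $3,4$ has size $e^{-5t}$. Single-peak errors of order $1/n$ that do not themselves carry this factor overwhelm it once $t\gtrsim\frac15\log n$.

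Second, you misidentify the mechanism in residues $3,4$. At finite $t$ the leading amplitudes do not cancel. The Euler--Maclaurin endpoint factors $\lambda_h(t)$ come from the $B_1$ terms with exponents $c_s=1/2-s/5$. They fold to $\Gamma_r(t)=\sum_h\zeta^{-hr}\lambda_h(t)$, which vanishes only in the limit $t\to\infty$ (the infinite product). At finite $t$ it satisfies $\Gamma_3\le-3x/4$ and $\Gamma_4\le-7x/10$, with $x=e^{-5t}$. The sign comes from this first-order amplitude, not from a second-order Gaussian term.

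The cancellation is between the conjugate pairs $\{1,4\}$ and $\{2,3\}$. Pairing $\zeta$ with $\bar\zeta$ therefore only makes the sum real. The pairing that helps is $v\leftrightarrow-v$ in the combined integrand, which removes the odd cubic phase term and the linear amplitude term from the error.

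For large $t$ you also need three further ingredients:
\begin{enumerate}
\item The exact factorization $\Gamma_r(z)=-e^{-5z}\mathcal B_r(e^{-5z})$. It moves $e^{5\ii v}$ into the phase and shifts the saddle equation to $d-5n=n^2\beta(t)$; the unshifted equation leaves an oscillatory linear term.
\item Single-peak errors that keep the factor $x$. These come from treating $S_\infty$ exactly via the eta transformation at the primitive roots, and only $T_n$ by Euler--Maclaurin.
\item Complementary bounds at all rational cusps, with moving denominators chosen by Dirichlet approximation. They give a saving $e^{-(37c/50)n/t}$, which beats the cancellation penalty $e^{5t}$ throughout $t\le\alpha\sqrt n$.
\end{enumerate}
Your uniform $e^{-cn}$ saving on the complement holds only for bounded $t$.
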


The proof treats the center, an intermediate range, and a boundary range on
circles of radius $e^{-t/n}$. The boundary is joined to an exact combinatorial
range, rather than handled by an asymptotic estimate at $t=\infty$.
For the three residue classes $\ro=\{0,1,2\}$, the saddle point equation is
$d=n^2\beta(t)$. For $\rc=\{3,4\}$, it is $d-5n=n^2\beta(t)$.
The latter shift arises from an exact analytic factorization of the amplitude.
The error in the center is reduced by pairing opposite integration variables,
which removes the first odd contribution.

\begin{proposition}[Exact finite verification]\label{prop:finite}
The signs in \eqref{eq:sign} hold for every $1\le n\le1749$ and every
coefficient of \eqref{eq:polynomial}.
\end{proposition}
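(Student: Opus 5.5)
This statement is a finite computation, so the proof is an exact integer computation rather than an argument. Organize it so that its cost is manageable and its correctness can be audited. The key point is that the polynomials need not be expanded independently for each $n$. They satisfy the recursion
\[
 S_{n}(q)=S_{n-1}(q)\prod_{s=1}^4(1-q^{5n-s}),
\]
so $S_n$ is obtained from $S_{n-1}$ by four multiplications by binomials $1-q^{k}$. Each multiplication is the in-place update $c_d\leftarrow c_d-c_{d-k}$, run over decreasing $d$. Each step costs $O(n^2)$ integer additions on a coefficient array of length $10n^2+1$. Running $n=1,\dots,1749$ costs about $\sum_n 4\cdot 10n^2\approx 7\times10^{10}$ big-integer additions. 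After each $n$ is completed, scan the array once and check $\epsd a_d(n)\ge0$ for all $0\le d\le 10n^2$.

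The main obstacle is size. The coefficients of $S_n$ grow roughly like $e^{cn}$: the maximum modulus on the unit circle is attained near primitive fifth roots of unity and is exponential in $n$. At $n\approx1750$ this means integers of several thousand bits, so $7\times10^{10}$ multiprecision additions is heavy but feasible. To make it cheaper and independently checkable, I would use multimodular arithmetic. Run the recursion modulo several word-sized primes $p_1,\dots,p_r$ whose product exceeds $2M$, where $M$ is an a priori bound on $|a_d(n)|$. One valid bound is $M\le\prod_{k}2=2^{4n}$, since the sum of absolute values of the coefficients of the product is at most $2^{4n}$. Then recover the signed representative in $(-P/2,P/2]$ by the Chinese remainder theorem. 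This also parallelizes trivially across primes.

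A cheaper alternative needs only $2^{4n}$ to be beaten for the sign test. The symmetric-range CRT reconstruction handles this automatically once $P>2^{4\cdot1749+1}$, which takes roughly $230$ primes of $31$ bits.

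For auditing, I would add three checks. First, the palindromic symmetry $a_d(n)=a_{10n^2-d}(n)$. This follows from the product being self-reciprocal up to sign: the degree $10n^2$ with $4n$ factors gives an even sign. Second, $S_n(1)=0$, which can be checked for each $n$. Third, exact multiprecision spot-checks at selected $n$, and comparison with the known Andrews--Wang--Krattenthaler computations for small $n$. By symmetry, only $0\le d\le5n^2$ needs to be scanned. A violation anywhere would be reported with its $(n,d)$, and the proposition asserts that none occurs. This completes the plan: the statement reduces to the correctness of this deterministic enumeration.
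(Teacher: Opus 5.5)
Your procedure is sound and follows the same strategy as the paper. You build $S_n$ from $S_{n-1}$ by the four binomial updates, use an a priori bound to guarantee exact integer recovery of every coefficient, and use $a_d(n)=a_{10n^2-d}(n)$ to halve the sign check. The differences lie in how exactness is secured and at what cost.

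\textbf{Arithmetic model.} The paper packs the polynomial into a single integer evaluated at a power of two $B$ and multiplies by $B^{5n-s}-1$. It then reads each coefficient directly from its base-$B$ digit using the borrow rule \eqref{eq:decode}. You instead carry residues modulo about $230$ primes. This parallelizes well, but it does not make the computation cheaper in total word operations. It also adds a CRT sign decision at each of roughly $9\times10^9$ lower-half positions.

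\textbf{Capacity bound.} This is the more substantive difference. Your unconditional bound $2^{4n}$ is the one the paper uses only for its $n\le500$ record, with base $2^{2008}$. Beyond that, the paper uses the fact that the growth you describe as $e^{cn}$ is governed exactly by $5^n$. From $S_n(1)=0$ and $S_n(\zeta^h)=5^n$ one gets the residue sums \eqref{eq:residue-sums}. Together with the signs already verified at stage $n-1$, these give $\|S_{n-1}\|_1=8\cdot5^{n-2}$, hence $|a_d(n)|\le128\cdot5^{n-2}$ before stage $n$ is tested. This induction is legitimate because the run halts at the first violation. At $n=1749$ the bound needs about $4060$ bits instead of your $7000$. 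That is the real saving; your ``cheaper alternative'' only restates the same $2^{4n}$ bound. The paper's reciprocal-prefix variant also stores only $d\le5n^2$, extending the old polynomial by reciprocity before each block.

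\textbf{Execution.} Like the paper's, your argument establishes the proposition only once the enumeration has actually been run with zero violations. The paper supplies that outcome: a complete record for $n\le500$ and an author-confirmed run for $501\le n\le1749$. Your proposal specifies a correct procedure but reports no result.
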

\begin{proof}[Computational verification]
The finite check uses exact integer coefficients. The supplied complete
record covers $1\le n\le500$; the author has checked and confirmed the
remaining interval $501\le n\le1749$. The decoding and capacity bounds,
and the provenance of the two intervals, are stated in
Section~\ref{sec:exact}. Altogether the range contains
$17\,849\,275\,499$ coefficient positions.
\end{proof}

\begin{corollary}[Third Borwein conjecture]\label{cor:full}
The Third Borwein sign pattern holds for every positive integer $n$.
\end{corollary}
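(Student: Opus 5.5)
The corollary follows by splitting the positive integers at $1750$ and citing the two results already stated. Fix a positive integer $n$ and an index $0\le d\le10n^2$. The sign pattern \eqref{eq:sign} is exactly the statement $\epsd a_d(n)\ge0$. This is because $\epsd=1$ precisely when $d=5m$, and $\epsd=-1$ precisely when $d=5m+r$ with $1\le r\le4$. Zero coefficients are allowed in both classes, so no strictness is needed.

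There are two cases:
\begin{itemize}
\item If $n\ge1750$, Theorem~\ref{thm:main} gives $\epsd a_d(n)\ge0$ for every $0\le d\le10n^2$ directly, and that is the sign pattern.
\item If $1\le n\le1749$, Proposition~\ref{prop:finite} asserts \eqref{eq:sign} for every coefficient of \eqref{eq:polynomial}.
\end{itemize}
These two ranges cover all of $\NN$, so the pattern holds for every positive integer $n$.

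The argument has no genuine obstacle, since the work lies in the two cited results. The only point to check is that the two ranges meet with no gap: the analytic threshold $1750$ is exactly one more than the upper endpoint $1749$ of the finite verification. One should also confirm that the coefficient range $0\le d\le10n^2$ in Theorem~\ref{thm:main} matches the full degree of $S_n$ in \eqref{eq:polynomial}. This holds because $\deg S_n=\sum_{j=1}^n\sum_{s=1}^4(5j-s)=10n(n+1)-10n=10n^2$.
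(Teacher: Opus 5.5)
Your proposal is correct and takes the same approach as the paper: Proposition~\ref{prop:finite} covers $1\le n\le1749$ and Theorem~\ref{thm:main} covers $n\ge1750$, and together these leave no gap. Your two side checks are also right, namely that $\epsd a_d(n)\ge0$ is exactly \eqref{eq:sign} and that $\deg S_n=10n^2$, so the coefficient ranges match.
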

\begin{proof}
Proposition~\ref{prop:finite} covers $n\le1749$, and
Theorem~\ref{thm:main} covers $n\ge1750$.
\end{proof}

\subsection{Constants and conventions}
Throughout the analytic proof,
\begin{equation}\label{eq:constants}
 \zeta=e^{2\pi\ii/5},\quad c=\frac{2\pi^2}{75},\quad
 \kappa=6c,\quad N=1750,\quad \gamma=\frac9{800}.
\end{equation}
All decimals used as upper or lower bounds are terminating rational numbers.
We use
\begin{equation}\label{eq:elementary}
 \frac{157}{50}<\pi<\frac{22}{7},\quad
 \frac{13}{50}<c<\frac4{15},\quad
 \frac{13}{8}<\frac{\pi^2}{6}<\frac53,
 \quad 12<e^{5/2}<13,\quad 20000<e^{10}<25000.
\end{equation}
Appendix~\ref{app:certificates} specifies rational bounds for exponential,
logarithmic, dilogarithmic, and trigonometric evaluations.
Finite interval comparisons in the proof are checked using these bounds,
not floating point samples.

Distances between angles are circular distances.
For a function $f$ on an interval, $\TV(f)$ denotes its total variation.
For formal series, $F\sg G$ means that $F-G$ has nonnegative coefficients.
Since there are $4n$ factors in \eqref{eq:polynomial},
\begin{equation}\label{eq:reciprocity}
 q^{10n^2}S_n(q^{-1})=S_n(q),\qquad
 a_d(n)=a_{10n^2-d}(n).
\end{equation}
Reflection replaces residue $r$ by $5-r$ and preserves the desired sign.
It is therefore enough to consider $0\le d\le5n^2$.

\section{An exact linear boundary}\label{sec:edge}
We first provide the range that will overlap the analytic boundary.
All series in this section are formal series in $Q$.
Define
\begin{equation}\label{eq:Ft}
 F_t(Q)=\sum_{k\ge0}\frac{Q^{k^2+tk}}{(Q;Q)_k},\qquad t\in\mathbb Z_{\ge0}.
\end{equation}
Subtracting the staircase $(t+1,t+3,\ldots,t+2k-1)$ shows that $F_t$
counts partitions with adjacent parts differing by at least $2$ and with
smallest part at least $t+1$. The empty partition is included.
Partitioning according to whether the smallest allowed part is present gives
\begin{equation}\label{eq:Ft-rec}
 F_t=F_{t+1}+Q^{t+1}F_{t+2},\qquad F_t\sg F_{t+1}\sg0.
\end{equation}
Let $G=F_0$, $H=F_1$, and put
$\mathcal U=G^2$, $\mathcal V=GH$, $\mathcal W=H^2$.
The Rogers--Ramanujan identities give
\begin{equation}\label{eq:RR}
 G=\frac1{(Q,Q^4;Q^5)_\infty},\qquad
 H=\frac1{(Q^2,Q^3;Q^5)_\infty}.
\end{equation}
The five-dissection used below follows from the identity
$T(q^5)-q-q^2/T(q^5)=(q;q)_\infty/(q^{25};q^{25})_\infty$,
where $T=G/H$; see \cite[Lemma 1.2]{BanerjeeDastidar}.
Multiplication by $GH(q^5)=(q^{25};q^{25})_\infty/(q^5;q^5)_\infty$ yields
\begin{equation}\label{eq:dissection}
 S_\infty(q):=\frac{(q;q)_\infty}{(q^5;q^5)_\infty}
       =\mathcal U(q^5)-q\mathcal V(q^5)-q^2\mathcal W(q^5).
\end{equation}
In particular, the residue classes $3$ and $4$ vanish in this infinite product.

\begin{lemma}\label{lem:C}
The series
\begin{equation}\label{eq:C}
 \mathcal C:=\frac{\mathcal V+\mathcal W-\mathcal U}{1-Q}
   =\sum_{j\ge0}Q^{2j(j+1)}F_{2j+1}F_{2j+2}
\end{equation}
has nonnegative coefficients. If $\mathcal A=F_1F_2$, then $\mathcal C\sg\mathcal A$.
\end{lemma}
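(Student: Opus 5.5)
The plan is to prove the series identity in \eqref{eq:C} by a telescoping recursion built from \eqref{eq:Ft-rec}. Both assertions of the lemma then follow at once. Every summand $Q^{2j(j+1)}F_{2j+1}F_{2j+2}$ is a product of series with nonnegative coefficients, by \eqref{eq:Ft-rec}, so $\mathcal C\sg0$. The $j=0$ summand is exactly $F_1F_2=\mathcal A$, so $\mathcal C-\mathcal A$ is a sum of nonnegative series and $\mathcal C\sg\mathcal A$. Note that the sum converges $Q$-adically, since $F_t=1+O(Q^{t+1})$ and the prefactors $Q^{2j(j+1)}$ have unbounded exponents.

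For the identity, I first rewrite the numerator using $F_0=F_1+QF_2$. Since $F_0^2=F_0F_1+QF_0F_2$,
\[
\mathcal V+\mathcal W-\mathcal U=F_1^2-QF_0F_2=F_1^2-QF_1F_2-Q^2F_2^2 .
\]
This suggests introducing a family of quadratic forms
\[
Y_t=F_t^2+\alpha_t(Q)F_tF_{t+1}+\beta_t(Q)F_{t+1}^2 ,
\]
normalized so that $Y_1=\mathcal V+\mathcal W-\mathcal U$. The exponent gaps $2(j+1)(j+2)-2j(j+1)=4j+4=2t+2$ at $t=2j+1$ dictate the recursion to aim for:
\begin{equation*}
Y_t=(1-Q)F_tF_{t+1}+Q^{2t+2}Y_{t+2}\qquad(t\text{ odd}).
\end{equation*}
Iterating this from $t=1$ gives
\[
Y_1=(1-Q)\sum_{j\ge0}Q^{2j(j+1)}F_{2j+1}F_{2j+2},
\]
because the tail $Q^{2J(J+1)}Y_{2J+1}$ tends to $0$ $Q$-adically. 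Dividing by $1-Q$ yields \eqref{eq:C}.

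To verify the recursion, I put $a=F_{t+1}$, $b=F_{t+2}$, $c=F_{t+3}$. Then \eqref{eq:Ft-rec} gives $F_t=a+Q^{t+1}b$, $a=b+Q^{t+2}c$ and $b=F_{t+4}+Q^{t+3}F_{t+4}'$-type relations, so both sides reduce to quadratic forms in two consecutive $F$'s. The coefficients $\alpha_t,\beta_t$ are then determined by matching coefficients. The naive choice $\alpha_t=-Q^t$, $\beta_t=-Q^{2t}$, read off from the $t=1$ case, does not work for general $t$. A direct check leaves a residual $a^2(Q-Q^t-Q^{2t})$ that is not divisible by $Q^{2t+2}$. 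So the shape of the coefficients must depend on $t$ more subtly. They may involve factors such as $(1-Q)$ or several monomials, or the recursion may be needed in a two-step form relating $Y_t$ to $Y_{t+2}$ through an intermediate $Y_{t+1}$ of a different shape.

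The main obstacle is finding the correct invariant family $Y_t$. I would first solve for it by an ansatz with undetermined Laurent-monomial coefficients in $Q$, using only the linear relations \eqref{eq:Ft-rec} to eliminate $F_t$ and $F_{t+3}$. I would then confirm the recursion symbolically and check it against low-order expansions of the $F_t$. If no quadratic ansatz closes, the fallback is a combinatorial route. One would interpret $(1-Q)^{-1}$ as adding a part, and $Q^{2j(j+1)}F_{2j+1}F_{2j+2}$ as pairs of Rogers--Ramanujan partitions classified by a Durfee-type statistic. The identity would then follow from a sign-reversing involution on $\mathcal V+\mathcal W-\mathcal U$.
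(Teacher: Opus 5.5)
\textbf{There is a genuine gap: you never produce the family $Y_t$.} Your strategy is the right one, and it is the paper's: a telescoping recursion driven by \eqref{eq:Ft-rec}, with the tail removed $Q$-adically. Several of your pieces are correct:
\begin{itemize}
\item the rewriting $\mathcal V+\mathcal W-\mathcal U=F_1^2-QF_1F_2-Q^2F_2^2$;
\item the target recursion $Y_t=(1-Q)F_tF_{t+1}+Q^{2t+2}Y_{t+2}$, which yields the exponents $2j(j+1)$;
\item the deduction of $\mathcal C\sg0$ and $\mathcal C\sg\mathcal A$ from the $j=0$ term.
\end{itemize}
But the family $Y_t$ is the entire content of the identity \eqref{eq:C}. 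Deferring it to an undetermined-coefficient search, or to an unspecified sign-reversing involution, leaves the lemma unproved.

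The test you used to reject your first guess is also unreliable. When both sides are written in $a=F_{t+1}$, $b=F_{t+2}$, the right side contains $F_{t+3}=(a-b)Q^{-t-2}$. So the $a^2$-coefficient of $Q^{2t+2}Y_{t+2}$ is $\beta_{t+2}Q^{-2}$, which need not be divisible by $Q^{2t+2}$. Your divisibility criterion would reject the correct family too. Your naive choice does fail (already at $t=1$), but not for the reason given.

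\textbf{The missing invariant.} Take the Tur\'an-type expression
\[
Y_t=F_t^2-QF_{t-1}F_{t+1}=F_t^2-QF_tF_{t+1}-Q^{t+1}F_{t+1}^2 ,
\]
using $F_{t-1}=F_t+Q^tF_{t+1}$. Thus $\alpha_t=-Q$ does not depend on $t$, and $\beta_t=-Q^{t+1}$. The paper writes this as $E_t=F_{t+1}^2-QF_tF_{t+2}$, so $Y_t=E_{t-1}$.

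The verification is two applications of \eqref{eq:Ft-rec}. First, using $F_t-F_{t+1}=Q^{t+1}F_{t+2}$,
\[
Y_t-(1-Q)F_tF_{t+1}=F_t(F_t-F_{t+1})-Q^{t+1}F_{t+1}^2=Q^{t+1}\bigl(F_tF_{t+2}-F_{t+1}^2\bigr).
\]
Second, substituting $F_t=F_{t+1}+Q^{t+1}F_{t+2}$ and $F_{t+1}-F_{t+2}=Q^{t+2}F_{t+3}$,
\[
F_tF_{t+2}-F_{t+1}^2=Q^{t+1}F_{t+2}^2-Q^{t+2}F_{t+1}F_{t+3}=Q^{t+1}\bigl(F_{t+2}^2-QF_{t+1}F_{t+3}\bigr)=Q^{t+1}Y_{t+2}.
\]
Hence $Y_t=(1-Q)F_tF_{t+1}+Q^{2t+2}Y_{t+2}$ holds for every $t\ge1$, not only odd $t$. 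Since $Y_1=F_1^2-QF_0F_2=\mathcal V+\mathcal W-\mathcal U$, your iteration and $Q$-adic tail argument then complete the proof exactly as in the paper.
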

\begin{proof}
Set $E_t=F_{t+1}^2-QF_tF_{t+2}$.
Two applications of \eqref{eq:Ft-rec} give
\begin{equation}\label{eq:E}
 E_t-(1-Q)F_{t+1}F_{t+2}=Q^{2t+4}E_{t+2}.
\end{equation}
After $J$ iterations, the remainder in $E_t/(1-Q)$ contains the factor
$Q^{2J(J+t+1)}$. Its valuation tends to infinity, so passage to the formal
series limit is valid. We obtain
\[
 \frac{E_t}{1-Q}=\sum_{j\ge0}Q^{2j(j+t+1)}F_{t+2j+1}F_{t+2j+2}.
\]
Since $G=H+QF_2$, one has $E_0=\mathcal V+\mathcal W-\mathcal U$.
Every summand is nonnegative, and the $j=0$ summand is $\mathcal A$.
\end{proof}

\begin{lemma}\label{lem:edge}
For every $n\ge2$, the desired signs hold for $d\le5n$.
Moreover,
\begin{equation}\label{eq:edge34}
 a_{5M+3}(n)\le0,\qquad a_{5M+4}(n)\le0
       \quad(0\le M\le3n-1).
\end{equation}
\end{lemma}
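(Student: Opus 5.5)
The plan is to compare $S_n$ with the infinite product $S_\infty$ of \eqref{eq:dissection}. Write
\[
 S_n(q)=S_\infty(q)\,R_n(q),\qquad
 R_n(q)=\frac{(q^{5n+5};q^5)_\infty}{(q^{5n+1};q)_\infty}
 =\prod_{\substack{m>5n\\ 5\nmid m}}\frac1{1-q^m}.
\]
Then $R_n$ counts partitions into parts $m>5n$ with $5\nmid m$. Hence $R_n\equiv1\pmod{q^{5n+1}}$.

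For $d\le5n$ this gives $a_d(n)=[q^d]S_\infty$. By \eqref{eq:dissection} and \eqref{eq:RR}, the residue-$0$ part is $\mathcal U(q^5)\sg0$ and the residue-$1,2$ parts are $-q\mathcal V(q^5)$ and $-q^2\mathcal W(q^5)$, both $\preceq0$, since $G,H$ are partition generating functions. The residue-$3,4$ parts vanish. This proves the first assertion.

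For \eqref{eq:edge34} we need $d\le15n-1$. Partitions with three or more allowed parts have size at least $15n+3$, so modulo $q^{15n}$ we have $R_n\equiv1+R^{(1)}+R^{(2)}$. Here $R^{(1)}=\sum_{m>5n,\,5\nmid m}q^m$ and $R^{(2)}$ is the generating function of two-part partitions. Since $S_\infty$ has no residue-$3,4$ terms, only $S_\infty R^{(1)}$ and $S_\infty R^{(2)}$ contribute in those classes.

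The key computation is the first-order term. We have
\[
 R^{(1)}=\frac{q^{5n}(q+q^2+q^3+q^4)}{1-q^5}.
\]
Multiplying $\mathcal U(q^5)-q\mathcal V(q^5)-q^2\mathcal W(q^5)$ by $q+q^2+q^3+q^4$, the residue-$3$ part is $q^3(\mathcal U-\mathcal V-\mathcal W)(q^5)$. The residue-$4$ part is the same expression with $q^4$ in place of $q^3$. Lemma~\ref{lem:C} rewrites $(\mathcal U-\mathcal V-\mathcal W)(Q)/(1-Q)$ as $-\mathcal C(Q)$. So the residue-$r$ contribution, for $r=3,4$, is
\[
 -q^{5n+r}\,\mathcal C(q^5)\preceq0,
\]
and it already has the desired sign.

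The remaining step, which I expect to be the main obstacle, is the second-order term $S_\infty R^{(2)}$. It starts at degree $10n+2$ and is not sign-definite. I would organize $R^{(2)}$ by residue classes of the two parts, as $q^{10n}$ times rational functions in $q$ with denominators $(1-q^5)(1-q^{10})$. Taking residue-$3,4$ parts then expresses its contribution as $q^{10n+r}$ times explicit combinations of $\mathcal U,\mathcal V,\mathcal W$ in $Q=q^5$. I would then reduce these combinations, via $G=H+QF_2$ and \eqref{eq:Ft-rec}, to the form $+Q^{n}\cdot(\text{polynomial})\cdot F_1F_2$ plus nonpositive series.

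The decisive comparison is to show that, for exponents up to $3n-1$ in $Q$, this positive part is dominated by the available negative term $\mathcal C\sg\mathcal A=F_1F_2$ shifted by $Q^n$. That is, I would aim to verify $Q^{n}\mathcal A\sg(\text{the positive part})$ coefficientwise in the relevant window. The hypothesis $n\ge2$ enters here: the two-part correction begins only after the negative first-order term has accumulated enough mass.

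If the bare inequality $\mathcal C\sg\mathcal A$ proves too weak, I would fall back on the next summand $Q^4F_3F_4$ of \eqref{eq:C}, together with the monotonicity $F_t\sg F_{t+1}$, to absorb the remaining positive terms.
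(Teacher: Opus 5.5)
Your setup matches the paper's. The factorization $S_n=S_\infty R_n$ settles $d\le5n$. The first layer gives exactly $-q^{5n+r}\mathcal C(q^5)$ in residues $3,4$, which proves \eqref{eq:edge34} for $M\le2n-1$. The gap is the range $2n\le M\le3n-1$, which is the real content of the lemma, and there the plan misidentifies what has to be compared.

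The two-part layer does not reduce to a polynomial multiple of $F_1F_2$: the denominators $(1-Q)(1-Q^2)$ of the residue components of $h_2$ survive. Carrying out your reduction with $\mathcal C=\mathcal A+Q^4\mathcal C_2$, $\mathcal W=\mathcal A+Q^2\mathcal B$, $\mathcal V=\mathcal W+Q\mathcal A$ and $\mathcal B=F_1F_3\preceq\mathcal A$ gives $d_{2,3}\preceq 2Q^2\mathcal A/(1-Q)$ and $d_{2,4}\preceq Q(1+Q)\mathcal A/(1-Q)$. So, with $s=M-2n$, $L=n+s$ and $\mathcal A=\sum_jA_jQ^j$, what you actually need is
\[
 A_L\ge2\sum_{j=0}^{s-2}A_j\quad(r=3),\qquad
 A_L\ge\sum_{j=0}^{s-1}A_j+\sum_{j=0}^{s-2}A_j\quad(r=4).
\]
A single coefficient of $\mathcal A$ must dominate cumulative sums of much lower coefficients. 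That is a growth statement across indices, and your proposed tools do not give it. The relation $F_t\sg F_{t+1}$ compares coefficients at the same index. The further summands $Q^{2j(j+1)}F_{2j+1}F_{2j+2}$ of $\mathcal C$ only add more coefficients at index $L$, without relating them to partial sums at indices $\le s$. As written, the decisive comparison ``$Q^n\mathcal A\sg(\text{positive part})$'' is simply the statement to be proved.

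The missing idea is the paper's combinatorial injection. $A_j$ counts ordered pairs of gap-$2$ partitions with smallest parts at least $2$ and $3$, respectively.
\begin{itemize}
\item Given a pair of weight $j\le s-2$, insert the part $L-j$ into either row.
\item Since $L\ge2s+1$, the new part exceeds every existing part by at least $L-2j\ge5$, so the gap and minimum-part conditions persist.
\item It is the unique part larger than $L/2$, so deleting it recovers both the source pair and the chosen row.
\item The two insertions are therefore injective with disjoint images in the pairs of weight $L$, which gives the $r=3$ inequality.
\item For $r=4$, insert into the first row when $j\le s-1$ and into the second row when $j\le s-2$; then $L-2j\ge3$ still suffices.
\end{itemize}
Together with $[Q^L]\mathcal C\ge A_L$, this closes $2n\le M\le3n-1$. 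Without this, or some equivalent superadditive growth bound for the coefficients of $F_1F_2$, your argument stops at $M=2n-1$.
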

\begin{proof}
Write $Q=q^5$ and
\[
 S_n(q)=S_\infty(q)T_n(q),\qquad
 T_n(q)=\prod_{\substack{k>5n\\5\nmid k}}(1-q^k)^{-1}.
\]
The first nonconstant term of $T_n$ has degree $5n+1$, proving the first
assertion by \eqref{eq:dissection}.
Sort tail partitions by their number of parts:
\begin{equation}\label{eq:tail-layers}
 T_n=1+Q^nh_1+Q^{2n}h_2+\cdots,
 \qquad h_1=\frac{q+q^2+q^3+q^4}{1-Q}.
\end{equation}
The residue-$r$ components of $S_\infty h_1$ are $-q^r\mathcal C$ for
$r=3,4$. Thus $a_{5M+r}(n)=-[Q^{M-n}]\mathcal C\le0$ when
$n\le M\le2n-1$, while these coefficients are zero for $M<n$.

Put $D_2=(1-Q)(1-Q^2)$ and write $h_2=\sum_{r=0}^4q^r h_{2,r}(Q)$.
Sorting pairs of parts by residue gives
\begin{equation}\label{eq:h2}
 \begin{array}{ll}
 h_{2,1}=Q(2+Q)/D_2,&h_{2,2}=(1+Q+Q^2)/D_2,\\
 h_{2,3}=(1+2Q)/D_2,&h_{2,4}=(2+Q)/D_2.
 \end{array}
\end{equation}
The denominators for equal and unequal residues before simplification are
$(1-Q)(1-Q^2)$ and $(1-Q)^2$, respectively, so \eqref{eq:h2} also follows
by direct multiplication of geometric series.
Let $q^r d_{2,r}$ be the residue-$r$ component of $S_\infty h_2$.
Using \eqref{eq:C} gives
\begin{align}
 d_{2,3}&=\frac{Q\mathcal V+(1+Q)\mathcal W-(1+2Q)\mathcal C}{1-Q^2},\label{eq:d23}\\
 d_{2,4}&=\frac{\mathcal V+(1+Q)\mathcal W-(2+Q)\mathcal C}{1-Q^2}.\label{eq:d24}
\end{align}
For $\mathcal B=F_1F_3$ and $\mathcal C_2=E_2/(1-Q)\sg0$, the recurrences give
\[
 \mathcal C=\mathcal A+Q^4\mathcal C_2,\quad
 \mathcal B\preceq\mathcal A,\quad
 \mathcal W=\mathcal A+Q^2\mathcal B,\quad
 \mathcal V=\mathcal W+Q\mathcal A.
\]
Substitution in \eqref{eq:d23}--\eqref{eq:d24} therefore yields
\begin{equation}\label{eq:d2-bounds}
 d_{2,3}\preceq\frac{2Q^2\mathcal A}{1-Q},\qquad
 d_{2,4}\preceq\frac{Q(1+Q)\mathcal A}{1-Q}.
\end{equation}

For $2n\le M\le3n-1$, set $s=M-2n$ and $L=M-n=n+s$.
Then $0\le s\le n-1$ and $L\ge2s+1$.
Three tail parts have total weight at least $15n+3$, whereas
$5M+r\le15n-1$. Consequently
\begin{equation}\label{eq:two-layer}
 a_{5M+r}(n)=-[Q^L]\mathcal C+[Q^s]d_{2,r},\qquad r=3,4.
\end{equation}
Write $\mathcal A=\sum_j A_jQ^j$.
The coefficient $A_j$ counts ordered pairs of gap-$2$ partitions, with
minimum parts $2$ and $3$ in the two rows.
For each pair of total weight $j\le s-2$, add $L-j$ to either row.
The difference from every existing part is at least $L-2j\ge5$.
The inserted part is the unique part exceeding $L/2$, so deletion recovers
the source and its row. The two injections have disjoint images. Hence
\[
 A_L\ge2\sum_{j=0}^{s-2}A_j\ge[Q^s]d_{2,3}.
\]
For $r=4$, add the new part to the first row for $j\le s-1$, and to the
second row for $j\le s-2$. Now $L-2j\ge3$ in the widest range.
The same inverse map gives
\[
 A_L\ge\sum_{j=0}^{s-1}A_j+\sum_{j=0}^{s-2}A_j\ge[Q^s]d_{2,4}.
\]
Empty sums are zero. Since $[Q^L]\mathcal C\ge A_L$, \eqref{eq:two-layer}
proves both inequalities.
\end{proof}

For completeness, $S_1$ has coefficient sequence
$(1,-1,-1,0,0,2,0,0,-1,-1,1)$.
The bounds in Lemma~\ref{lem:edge} extend the two cancelling residue classes
to degrees $15n-2$ and $15n-1$, respectively.

\section{The radial function and derivative bounds}\label{sec:radial}
For $\Rea z>0$, define
\begin{equation}\label{eq:A}
 A(z)=\int_0^1\log D(e^{-5zu})\dd u
     =\frac c z-\frac{\Li(e^{-5z})}{5z}+\frac{\Li(e^{-25z})}{25z}.
\end{equation}
The logarithm is chosen by continuation from positive real $z$.
The integral also gives an analytic continuation through $z=0$ on
$|\Ima z|<2\pi/25$. In particular, the apparent singularity at zero in
the second expression is removable.
For real $t\ge0$, put $\beta(t)=-A'(t)$ and $V(t)=A''(t)$.

\begin{lemma}\label{lem:radial}
The function $\beta$ decreases strictly from $5$ to $0$.
Writing $b=\beta(2)$ and $B=\beta(1/2)$, we have
\begin{equation}\label{eq:beta-bounds}
 \beta(t)<\frac c{t^2}\quad(t>0),\qquad
 b>\frac1{20},\qquad B<\frac{16}{15}.
\end{equation}
Furthermore,
\begin{align}
 2\le V(t)<17&\quad(0\le t\le1/2),\label{eq:center-V}\\
 \frac c{t^3}\le V(t)\le\frac{3c}{t^3},\quad V(2)>\frac3{50}
     &\quad(t\ge1/2).\label{eq:outer-V}
\end{align}
\end{lemma}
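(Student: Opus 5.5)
The plan is to differentiate under the integral in \eqref{eq:A} and reduce every claim to properties of the single function $\phi(x)=-\frac{d}{dx}\log D(e^{-x})$. With $D(X)=1+X+\cdots+X^4$ we have
\[
 \beta(t)=\int_0^1 5u\,\phi(5tu)\dd u,\qquad
 V(t)=\int_0^1 25u^2\,\psi(5tu)\dd u,\qquad \psi=-\phi' .
\]
We can write
\[
 \phi(x)=\frac{1}{e^x-1}-\frac{5}{e^{5x}-1}=\sum_{k\ge1}\chi(k)e^{-kx},
\]
where $\chi(k)=1$ for $5\nmid k$ and $\chi(k)=-4$ for $5\mid k$.

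\emph{Step 1: monotonicity and endpoints.} We will show that $\phi$ is positive and strictly decreasing on $[0,\infty)$, i.e.\ $\psi>0$.
\begin{itemize}
 \item Positivity of $\phi$ is $D'(X)X/D(X)>0$ for $X\in(0,1]$.
 \item For $\psi>0$, write $\phi(x)=g(x)-g(5x)\cdot 5$ with $g(x)=1/(e^x-1)$. Equivalently, $\phi(x)=\frac{1}{x}h(x)$-type expressions, using $\frac{d}{dx}\frac{X D'(X)}{D(X)}$ at $X=e^{-x}$. The key fact is that $XD'(X)/D(X)$ is increasing in $X$ on $(0,1)$, i.e.\ the log-derivative of a polynomial with nonnegative coefficients has increasing "mean exponent". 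This is the standard variance-positivity: $X\frac{d}{dX}\bigl(XD'/D\bigr)$ is the variance of the distribution on $\{0,\dots,4\}$ with weights $X^k$, which is positive.
\end{itemize}
Then $\beta'(t)=-V(t)<0$, so $\beta$ strictly decreases. Its endpoints are $\beta(0)=\frac52\phi(0)=\frac52\cdot 2=5$, using $\phi(0)=D'(1)/D(1)=10/5$. As $t\to\infty$, dominated convergence and the exponential decay of $\phi$ give $\beta(t)\to0$.

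\emph{Step 2: the global bounds in $t$.} Substituting $x=5tu$ gives
\[
 \beta(t)=\frac{1}{5t^2}\int_0^{5t}x\phi(x)\dd x,\qquad
 V(t)=\frac{1}{5t^3}\int_0^{5t}x^2\psi(x)\dd x .
\]
The full integrals are computed from the Dirichlet series:
\[
 \int_0^\infty x\phi(x)\dd x=\sum_k\frac{\chi(k)}{k^2}=\zeta(2)\Bigl(1-\frac{5}{25}\Bigr)=\frac{2\pi^2}{15}=5c .
\]
This gives $\beta(t)<c/t^2$ strictly, since the integrand is positive. Similarly, integrating by parts,
\[
 \int_0^\infty x^2\psi\dd x=2\int_0^\infty x\phi\dd x=10c,
\]
so $V(t)<2c/t^3\le 3c/t^3$.

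For the lower bound $V(t)\ge c/t^3$ when $t\ge1/2$, we need $\int_0^{5t}x^2\psi\ge5c$ for $5t\ge5/2$. Because $\int_0^{X}x^2\psi$ is increasing in $X$, it suffices to check the single value $X=5/2$, i.e.\ that at least half the total mass $10c$ lies below $5/2$. I would bound the tail $\int_{5/2}^\infty x^2\psi\dd x$ explicitly using
\[
 \psi(x)=\frac{e^x}{(e^x-1)^2}-\frac{25e^{5x}}{(e^{5x}-1)^2}\le\frac{e^x}{(e^x-1)^2},
\]
together with $e^{5/2}>12$. This gives an elementary tail below $\frac12\cdot10c$.

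\emph{Step 3: the numerical constants.} The three constants are handled as follows.
\begin{itemize}
 \item For $b=\beta(2)>1/20$, I will use $\beta(2)=\frac1{20}\int_0^{10}x\phi$. It suffices that $\int_0^{10}x\phi>1$, which follows from $5c>1.3$ minus a tiny tail, bounded using $e^{10}>20000$.
 \item For $B=\beta(1/2)<16/15$, I will use the integral over $[0,5/2]$ and bound $\phi$ from above there, for instance by truncated series or a rational upper envelope (Appendix certificates).
 \item For $V(2)>3/50$, the lower bound from Step 2 already gives $c/8>13/400>$… that is insufficient, so I will instead evaluate directly: $V(2)=\frac1{40}\int_0^{10}x^2\psi$, which is essentially $10c/40=c/4>0.065>0.06$ after subtracting the small tail.
\end{itemize}

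\emph{Step 4: the center bound \eqref{eq:center-V}.} On $[0,1/2]$, $V$ is monotone provided $\psi$ is monotone (decreasing), because
\[
 V'(t)=\int_0^1125u^3\psi'(5tu)\dd u .
\]
The endpoint $V(0)=\frac{25}{3}\psi(0)$, where $\psi(0)$ is the variance of the uniform distribution on $\{0,\dots,4\}$, equal to $2$; hence $V(0)=50/3<17$. The other endpoint $V(1/2)\ge2$ follows from a certified evaluation.

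The main obstacle is the monotonicity of $\psi$ on $[0,5/2]$, i.e.\ the sign of the third cumulant-type quantity. If this fails, I would instead bound $V$ on $[0,1/2]$ by bounding $\psi$ between $\psi(5/2)$-type values and $2$ directly via interval subdivision with the rational certificates of Appendix~\ref{app:certificates}.
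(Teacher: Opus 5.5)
\textbf{Comparison with the paper's route.} Your treatment of $\beta$ and of the outer variance bounds is correct and differs from the paper's.
\begin{itemize}
\item \emph{Paper.} It gets $\beta<c/t^2$ from the closed dilogarithm expression for $\beta$. It gets $c/t^3\le V\le3c/t^3$ and $V(2)>3/50$ from the remainder estimates $|A''(t)-2c/t^3|<(24/25)c/t^3$ and their sharper $t\ge2$ version, proved later in Lemma~\ref{lem:derivatives}.
\item \emph{You.} You write $\beta=\frac1{5t^2}\int_0^{5t}x\phi$ and $V=\frac1{5t^3}\int_0^{5t}x^2\psi$, and compute the total masses $5c$ and $10c$ from $\sum_k\chi(k)k^{-2}=\frac45\cdot\frac{\pi^2}{6}$. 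Positivity of the integrands then gives $\beta<c/t^2$, and even $V<2c/t^3$, for every $t>0$. Every lower bound becomes a one-sided tail estimate, with no forward reference.
\end{itemize}
Two of your steps are heavier than necessary. First, $B<4c<16/15$ is immediate from $\beta<c/t^2$ and $c<4/15$. Second, $V(1/2)\ge8c>2$ is your own Step~2 bound at $t=1/2$, so neither needs a certified evaluation.

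The tail estimate at $5t=5/2$ is very tight, however. With $e^{5/2}>12$ your majorant gives $159/121\approx1.3140$, while you need it below $5c=2\pi^2/15\approx1.3160$. The coarse $c>13/50$ does not suffice. Use $\pi>157/50$, or integrate $\sum_{k\ge1}ke^{-kx}$ termwise, which brings the tail down to about $1.155$.

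\textbf{The gap: Step~4.} Both halves of \eqref{eq:center-V} rest on $V$ decreasing on $[0,1/2]$, so that $V\le V(0)=50/3$ and $V\ge V(1/2)$. By your formula for $V'$, this is the statement that $\psi(x)=K_2(e^{-x})$ decreases. Equivalently, the variance $K_2(X)$ of the tilted law $p_j(X)=X^j/D(X)$ increases on $[0,1]$, i.e.\ its third cumulant is nonnegative. Your reduction is right, but you flag this as the main obstacle and give no argument.

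The fallback does not close it as described. An envelope $\psi(5/2)\le\psi\le2$ on $[0,5/2]$ only yields $V(t)\ge\frac{25}{3}\psi(5/2)\approx0.81$ near $t=1/2$, far below $2$. The upper envelope $\psi\le2$ itself needs proof; it is the inequality $2D(X)^2\ge XP_2(X)$. A genuine subdivision would need Lipschitz control of $\psi$, which you do not specify.

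\textbf{How the paper fills it.} The missing ingredient is a finite polynomial computation. With $K_2=XP_2/D^2$ as in \eqref{eq:K2}, one has $XK_2'(X)=X(1-X)P_3(X)/D(X)^3$. Here $P_3$ has coefficients $(1,8,35,112,169,169,112,35,8,1)$, all positive. Hence $K_2$ increases on $[0,1]$ and $K_2\le K_2(1)=2$. So $\psi$ decreases and $V$ decreases, and \eqref{eq:center-V} follows from $V(0)=50/3<17$ and $V(1/2)\ge8c>2$.
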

\begin{proof}
Give $R\in\{0,1,2,3,4\}$ probabilities $p_j(X)=X^j/D(X)$.
If $K_2(X)=\Var_X(R)$, direct multiplication gives
\begin{align}
 K_2(X)&=\frac{XP_2(X)}{D(X)^2},\quad
 P_2=1+4X+10X^2+20X^3+10X^4+4X^5+X^6,\label{eq:K2}\\
 XK_2'(X)&=\frac{X(1-X)P_3(X)}{D(X)^3},\label{eq:K3}
\end{align}
where $P_3$ has coefficient sequence
$(1,8,35,112,169,169,112,35,8,1)$.
Thus $K_2$ increases on $[0,1]$, $K_2\le2$, and
\begin{equation}\label{eq:V-integral}
 V(t)=25\int_0^1u^2K_2(e^{-5tu})\dd u>0.
\end{equation}
This also shows that $V$ decreases and $V(0)=50/3$.
The derivative formula and \eqref{eq:A} give $\beta(0)=5$ and
$\beta(t)\to0$ as $t\to\infty$.
Differentiating \eqref{eq:A}, with $x=e^{-5t}$, gives
\[
 \beta(t)=\frac c{t^2}-\frac{\Li(x)}{5t^2}+\frac{\Li(x^5)}{25t^2}
          +\frac{\log(1-x)-\log(1-x^5)}t<\frac c{t^2}.
\]
At $t=2$, using $x<1/20000$, this formula yields
$b>13/200-11/(20\cdot19999)>1/20$.
The upper bound for $B$ follows from $c<4/15$.
The remaining variance bounds follow from the derivative estimates below:
$|A''(t)-2c/t^3|<(24/25)c/t^3$ for $t\ge1/2$.
In particular $V(1/2)>8c>2$.
For $t\ge2$, the sharper bound
$|A''(t)-2c/t^3|\le7e^{-5t}/[t(1-e^{-5t})]$ gives
$V(2)>13/200-7/39998>3/50$.
\end{proof}

\begin{lemma}\label{lem:derivatives}
On every line $\Rea z=t\ge1/2$,
\begin{equation}\label{eq:middle-derivatives}
 |A'''(z)|\le\frac{11c}{t^4},\qquad
 |A''''(z)|\le\frac{46c}{t^5}.
\end{equation}
On $\Rea z=t\ge2$,
\begin{align}
 \frac{199c}{100t^3}\le V(t)\le\frac{201c}{100t^3},\label{eq:boundary-V}\\
 |A'''(z)|\le\frac{61c}{10t^4},\qquad
 |A''''(z)|\le\frac{123c}{5t^5}.\label{eq:boundary-derivatives}
\end{align}
For $t\ge1/2$, $2|A(z)-c/z|<3c/(20t)$.
\end{lemma}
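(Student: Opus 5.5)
The plan is to write $A(z)=c/z+R(z)$, where
\[
 R(z)=-\frac{\Li(e^{-5z})}{5z}+\frac{\Li(e^{-25z})}{25z}.
\]
The derivatives of $c/z$ are explicit: $|(c/z)^{(k)}|=k!\,c/|z|^{k+1}\le k!\,c/t^{k+1}$ on $\Rea z=t$. This already gives $2c/t^3$, $6c/t^4$ and $24c/t^5$ for $k=2,3,4$. The whole task is therefore to bound $R^{(k)}$, $k=0,\dots,4$, on vertical lines by quantities of the form $\theta_k c/t^{k+1}$ with explicit $\theta_k$. For the main-term plus remainder bound on $A''$ stated in Lemma~\ref{lem:radial} we need $\theta_2\le 24/25$. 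For the present lemma we need $\theta_3\le 5$ and $\theta_4\le 22$ when $t\ge1/2$, and $\theta_0\le 3/40$ for the last assertion. For $t\ge2$ we need the much smaller values $\theta_3\le1/10$, $\theta_4\le 3/5$, and the two-sided variance bound. The variance bound on the real axis comes from $|R''(t)|\le (c/100)t^{-3}$.

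To bound $R^{(k)}$ I would expand it termwise. Using $\Li(w)=\sum_{m\ge1}w^m/m^2$ with $|e^{-5z}|=e^{-5t}=:x\le e^{-5/2}<1/12$, we get
\[
 R(z)=\frac{1}{z}\sum_{m\ge1}\frac{-e^{-5mz}/5+e^{-25mz}/25}{m^2}=\frac{g(z)}{z}.
\]
The Leibniz rule then gives
\[
 R^{(k)}=\sum_{j=0}^k\binom kj g^{(j)}(z)\,(-1)^{k-j}(k-j)!\,z^{-(k-j)-1}.
\]
Each $g^{(j)}$ is bounded by
\[
 |g^{(j)}|\le \sum_m \frac{5^{j}m^{j}x^m}{5m^2}+\sum_m\frac{25^{j}m^{j}x^{5m}}{25m^2}.
\]
These are polylogarithm-type sums in $x$. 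They are dominated by their first term times a geometric factor $1/(1-x)$, or $1/(1-x)^{j-1}$ with the binomial moment sums. Then $|z|\ge t$ gives
\[
 |R^{(k)}|\le t^{-k-1}\sum_j \binom kj (k-j)!\,t^{j}\,|g^{(j)}|,
\]
and each term has the shape $t^{j}e^{-5t}\cdot(\text{const})$. The factor $t^je^{-5t}$ is decreasing for $t\ge j/5$, so on $t\ge1/2$ (respectively $t\ge2$) its maximum is at the endpoint. Bounding it there with $12<e^{5/2}$ (respectively $e^{10}>20000$) and $c>13/50$ converts everything into a rational comparison.

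For $t\ge2$ the factor $e^{-10}<1/20000$ makes every correction tiny. So \eqref{eq:boundary-V} and \eqref{eq:boundary-derivatives} follow with large room: $6.1c-6c=0.1c$ against terms of size about $t^{3}\cdot 5^3e^{-5t}$. The same estimate gives the stated bound $7e^{-5t}/[t(1-e^{-5t})]$ used in Lemma~\ref{lem:radial}.

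The main obstacle is the regime $t$ near $1/2$. There $x\approx e^{-5/2}\approx 0.082$ and the factors $5^j t^j$ are as large as $(5/2)^j$, so the crude Leibniz bound may exceed $5c\approx1.3$ for $k=3$ or $22c$ for $k=4$. If it does, I would switch to the integral representation
\[
 A^{(k)}(z)=\int_0^1(-5u)^k (\log D)^{(k)}(e^{-5zu})\dd u,
\]
with derivatives taken in the variable $s=5zu$. I would then bound the cumulant functions $K_k$ of the distribution $p_j(X)$ for complex $X=e^{-5zu}$. For $k=3$ this is the numerator $P_3$ from \eqref{eq:K3}, but complex $X$ requires a lower bound on $|D(X)|$ away from its zeros, which are the primitive fifth roots of unity. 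Alternatively, I would split the $m$-sum, keeping $m=1$ exactly and bounding the $m\ge2$ tail geometrically. I would also use $|z|\ge t$ only on the $z^{-k-1}$ factor while keeping the oscillating exponentials bounded in modulus. I expect the exact first-term treatment plus the geometric tail to fit the constants $11c$ and $46c$, with the rational certificates of Appendix~\ref{app:certificates} used for the endpoint evaluations. The final assertion $2|R(z)|<3c/(20t)$ is the $k=0$ case: at $t\ge1/2$,
\[
 |g|\le \frac{\Li(x)}{5}+\frac{\Li(x^5)}{25}<\frac{0.09}{5},
\]
and indeed $0.036<3c/20\approx0.039$.
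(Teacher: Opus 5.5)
Your main route is the paper's own proof. You write $A=c/z+R$, differentiate the dilogarithm series termwise via Leibniz on $g(z)/z$, use $|z|\ge t$, and sum the moment series $\sum_m m^{\ell-2}Y^m$ with $Y=e^{-at}$, $a\in\{5,25\}$. This reproduces exactly the paper's majorants \eqref{eq:R2}--\eqref{eq:R4}.

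The fallbacks you sketch (complex cumulants, or splitting off $m=1$) are not needed, because the crude Leibniz bound does close. On $t\ge1/2$ it gives $t^3|R''|<(24/25)c$, $t^4|R'''|<5c$ and $t^5|R''''|<22c$. Numerically, the third-derivative sum is about $1.05$ against $5c>13/10$, and the fourth is about $5.62$ against $22c>286/50=5.72$. On $t\ge2$ the same sums give $c/100$, $c/10$ and $3c/5$. One small correction to your geometric factors: for $\ell=4$ the sum is $\sum_m m^2Y^m=Y(1+Y)/(1-Y)^3$, not a pure power of $1/(1-Y)$.

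There is one step that is wrong as written. You say that $t^je^{-5t}$, being decreasing for $t\ge j/5$, attains its maximum over $t\ge1/2$ at the endpoint. That holds only for $j\le2$. For $j=3,4$ the maximum is interior, at $t=3/5$ and $t=4/5$, so you must use
\[
 t^3e^{-5t}\le(3/5)^3e^{-3}<\tfrac{27}{2500},\qquad
 t^4e^{-5t}\le(4/5)^4e^{-4}<\tfrac{128}{15625},
\]
which follow from $e^3>20$ and $e^4>50$. This is what the paper does. Evaluating at $t=1/2$ would give $1/96$ and $1/192$, and these are not upper bounds over the whole range. In particular, the $a^3t^4Y$ term in the fourth-derivative bound would be understated by a factor of about $3/2$.

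With the correct maxima the bound still fits, but the fourth-derivative margin is only about $0.1$. So the rational evaluation is an essential part of the proof, not a formality, and your proposal leaves it undone. For $t\ge2$ your endpoint argument is valid, since $j/a\le4/5<2$. For the $a=25$ terms on $t\ge1/2$ it is also valid, since $j/25<1/2$; there the paper uses $t^jY\le2^{-j}12^{-5}$. Your treatment of the final assertion and of the $t\ge2$ variance bound is fine.
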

\begin{proof}
Write $R_A(z)=A(z)-c/z$. Each summand in $R_A$ is of the form
$\pm\Li(e^{-az})/(az)$, with $a=5,25$.
Termwise differentiation of the absolutely convergent positive series,
followed by $|z|\ge t$, gives the following bounds, summed over
$a\in\{5,25\}$ and $Y=e^{-at}$:
\begin{align}
 t^3|R_A''|&\le\sum_a\left\{\frac{at^2Y+2tY+(2/a)Y}{1-Y}\right\},\label{eq:R2}\\
 t^4|R_A'''|&\le\sum_a\left\{\frac{a^2t^3Y}{(1-Y)^2}
              +\frac{3at^2Y+6tY+(6/a)Y}{1-Y}\right\},\label{eq:R3}\\
 t^5|R_A''''|&\le\sum_a\left\{\frac{a^3t^4Y(1+Y)}{(1-Y)^3}
        +\frac{4a^2t^3Y}{(1-Y)^2}
        +\frac{12at^2Y+24tY+(24/a)Y}{1-Y}\right\}.\label{eq:R4}
\end{align}
For $a=5$, $t\ge1/2$, usable upper bounds for
$(Y,tY,t^2Y,t^3Y,t^4Y)$ are
\[
 \left(\frac1{12},\frac1{24},\frac1{48},\frac{27}{2500},\frac{128}{15625}\right).
\]
The first three follow by monotonicity; the last two follow by maximizing
$t^je^{-5t}$ at $t=j/5$ and using $e^3>20$, $e^4>50$.
For $a=25$, use $Y<12^{-5}$ and $t^jY\le2^{-j}12^{-5}$ for $j\le4$.
Insertion in \eqref{eq:R2}--\eqref{eq:R4} gives respectively upper bounds
less than $(24/25)c$, $5c$, and $22c$.
Adding the derivatives of $c/z$ proves \eqref{eq:middle-derivatives}
and the variance bound used in Lemma~\ref{lem:radial}.

For $t\ge2$, the powers $t^je^{-at}$ and the denominator factors in these
bounds decrease. Evaluate at $t=2$, replacing $e^{-10}$ by $1/20000$.
For the second derivative one may use the simpler bound
$t^3|R_A''|\le7t^2e^{-5t}/(1-e^{-5t})$.
The resulting bounds are $c/100$, $c/10$, and $3c/5$.
This proves \eqref{eq:boundary-V}--\eqref{eq:boundary-derivatives}.
Finally,
$2[\Li(1/12)/5+\Li(12^{-5})/25]<3c/20$
by $\Li(1/12)\le15/176$ and $\Li(Y)\le Y/(1-Y)$.
All the displayed rational comparisons are included in the certificates.
\end{proof}

\begin{lemma}\label{lem:complex-cumulants}
For $0\le t\le1/2$ and $|v|\le1/20$,
\begin{equation}\label{eq:complex-cumulants}
 |A'''(t-\ii v)|\le35V(t),\qquad
 |A''''(t-\ii v)|\le1050V(t).
\end{equation}
\end{lemma}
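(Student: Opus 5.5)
The approach is to use the integral representation of $A$ and write its derivatives as integrals of cumulants of the five-point distribution $p_j(X)=X^j/D(X)$, now evaluated at complex $X$. For $z=t-\ii v$ we have
\[
 A^{(k)}(z)=(-5)^k\int_0^1u^k\kappa_k\bigl(e^{-5zu}\bigr)\dd u,
\]
where $\kappa_k(X)$ is the $k$th cumulant of $R$. The cumulant is $\kappa_k(X)=(X\partial_X)^k\log D(X)$, which is a rational function with denominator $D(X)^k$. Thus $\kappa_2=K_2$ from \eqref{eq:K2}, and $\kappa_3=XK_2'$ from \eqref{eq:K3}. For $\kappa_4$ we compute $X\kappa_3'$ explicitly as $X P_4(X)/D(X)^4$ with an explicit integer polynomial $P_4$. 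The argument of $X=e^{-5zu}$ has modulus $5uv\le 1/4$, and $|X|=e^{-5tu}\in[e^{-5/2},1]$. So $X$ ranges over a compact sector $\Sigma=\{X:e^{-5/2}\le|X|\le1,\ |\arg X|\le1/4\}$, which stays well away from the zeros of $D$ (the primitive fifth roots of unity, at angles $\ge 2\pi/5$).

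The first key step is a uniform lower bound $|D(X)|\ge\delta$ on $\Sigma$. Together with crude upper bounds $|XP_3(X)(1-X)|\le M_3$ and $|XP_4(X)|\le M_4$ on $\Sigma$, this gives $|\kappa_3|\le M_3/\delta^3$ and $|\kappa_4|\le M_4/\delta^4$. For the lower bound on $|D|$, I would use $\Rea D(X)\ge 1+\sum_{j=1}^4|X|^j\cos(j/4)$: the angles $j\arg X$ are at most $1$ in absolute value, and $\cos 1>1/2$, so $|D|\ge1$ trivially, and in fact $|D|\ge 1+\tfrac12(|X|+\dots+|X|^4)$. The numerators are bounded termwise by coefficient sums times powers of $|X|\le1$: $P_3$ has coefficient sum $650$, and $|1-X|\le 1-|X|+|X|\cdot\tfrac14$.

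The second step compares with $V(t)=25\int_0^1u^2K_2(e^{-5tu})\dd u$. Since $K_2$ increases on $[0,1]$ and $K_2(x)\ge K_2(e^{-5/2})$ for $u\le1$, $t\le1/2$, we get $V(t)\ge \tfrac{25}{3}K_2(e^{-5/2})$. This is an explicit positive rational lower bound; it is crude, and the bound $V\ge2$ from \eqref{eq:center-V} is available as a fallback. Then
\[
 |A'''|\le125\int_0^1u^3|\kappa_3|\dd u\le\tfrac{125}{4}\sup_\Sigma|\kappa_3|,\qquad
 |A''''|\le 625\cdot\tfrac15\sup_\Sigma|\kappa_4|.
\]
Dividing by the lower bound for $V$ gives the constants $35$ and $1050$, provided the suprema are at most about $35\cdot2/31.25\approx2.2$ and $1050\cdot 2/125\approx16.8$. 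With only $V\ge2$ these targets may be too tight for the trivial termwise bounds, so instead of using $V\ge2$ I would pair the integrands pointwise. That is, I would show the pointwise inequalities $|\kappa_3(X)|\le \tfrac{35}{5}\cdot\tfrac{u^2}{u^3}\cdots$; more cleanly, $|\kappa_3(e^{-5zu})|\le 7\,K_2(e^{-5tu})$ and $|\kappa_4(e^{-5zu})|\le 42\,K_2(e^{-5tu})$. Integrated against $u^3\le u^2$, these give exactly $125\cdot7/25=35$ and $625\cdot42/25=1050$. This is the formulation I would commit to.

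The main obstacle is proving these pointwise ratio bounds on $\Sigma$. Near $X\to0$ they hold with room, since $\kappa_3\sim X$ and $K_2\sim X$ there; but that region is excluded anyway, because $|X|\ge e^{-5/2}$. Near $X=1$, $K_2(1)=2$ and $\kappa_3(1)=0$, with $\kappa_3$ small in the whole sector since $|1-X|$ is small. Meanwhile $\kappa_4(1)=-34/5\cdot$(a moderate value) is far below $84$. So the work is an explicit certified estimate. I would write $X=\rho e^{\ii\theta}$ with $\rho\in[e^{-5/2},1]$ and $|\theta|\le1/4$, and split $\rho$ into a few subintervals. On each piece I would bound $|P_k(X)|$ above by the triangle inequality, bound $|D(X)|$ below via the real-part estimate, and bound $K_2(\rho)$ below by monotonicity at the left endpoint, using the rational bounds from Appendix~\ref{app:certificates}. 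The constants $7$ and $42$ appear generous enough that a handful of subintervals should suffice.
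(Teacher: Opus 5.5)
Your proposal is correct. Its reduction is exactly the paper's. The paper also proves the pointwise bounds $|\kappa_3(e^{-5zu})|\le 7K_2(e^{-5tu})$ and $|\kappa_4(e^{-5zu})|\le 42K_2(e^{-5tu})$. It then integrates against $125u^3$ and $625u^4$, using $u^3,u^4\le u^2$ and \eqref{eq:V-integral}, to get $35$ and $1050$.

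Where you differ is in proving the pointwise bounds. The paper uses a tilting argument:
\begin{itemize}
\item It views the complex weights at $X=\rho e^{\ii\phi}$ as the real law $p_j(\rho)$ tilted by $e^{\ii\phi j}$, with $|\phi|\le 1/4$. The normaliser then has modulus at least $\cos 2\phi>7/8=m$.
\item Using only $|R-\EE R|\le 4$ and $\sigma^2=K_2(\rho)\le 2$, it bounds the tilted mean shift by $|\phi|\sigma^2/m$ and the tilted central moments by $4^{j-2}\sigma^2/m$.
\item Every term of the third and fourth cumulant expansions then carries a factor $\sigma^2$, so the ratios $7$ and $42$ follow in a few lines.
\end{itemize}
This is uniform in $\rho\in(0,1]$. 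It needs no explicit quartic numerator, no lower bound on $|X|$, and no subdivision.

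Your route is different but also works, with a large margin. It uses the explicit rational functions, the real-part bound $|D(X)|\ge 1+\sum_j \rho^j\cos(j/4)$, the triangle inequality on the numerators, and certified cells in $\rho\in[e^{-5/2},1]$.
\begin{itemize}
\item The quartic numerator is $XP_4(X)/D(X)^4$. Here $P_4$ is palindromic with coefficients $(1,12,69,272,221,-272,-1084,-1688,-1084,-272,221,272,69,12,1)$.
\item Its absolute coefficient sum is $5550$, against $|P_4(1)|=3250$, so the triangle inequality costs a factor below $2$.
\item Pointwise, your crude bounds give ratios of at most about $2$ for $\kappa_3$. For $\kappa_4$ they give at most about $10$, attained near $\rho=1$. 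These are well below the targets $7$ and $42$.
\end{itemize}
A handful of cells should indeed suffice. So your argument is more elementary and shows the constants are far from sharp. The paper's tilting argument is shorter and structural, and avoids interval certificates altogether.

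One small slip: $\kappa_4(1)=34/5-3\cdot 2^2=-26/5$.
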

\begin{proof}
For each $u$, the complex probabilities are the real probabilities
$p_j(e^{-5tu})$ tilted by $e^{\ii\phi j}$, where $|\phi|=|5vu|\le1/4$.
Their normalizing denominator has modulus at least
$\cos(2\phi)>7/8=:m$. Let $Y=R-\EE R$ and $\sigma^2=\EE Y^2\le2$.
Then $|Y|\le4$, the tilted mean displacement satisfies
$|\Delta|\le|\phi|\sigma^2/m$, and the tilted moments satisfy
$|M_j|\le4^{j-2}\sigma^2/m$ for $j\ge2$.
The two cumulants are
\[
 M_3-3\Delta M_2+2\Delta^3,\qquad
 M_4-4\Delta M_3-3M_2^2+12\Delta^2M_2-6\Delta^4.
\]
Dividing by $\sigma^2$, their upper bounds are
\begin{align*}
 &\frac4m+\frac{3(1/4)2}{m^2}+\frac{2(1/4)^3 4}{m^3}<7,\\
 &\frac{16}m+\frac{4(1/4)4\cdot2}{m^2}+\frac{3\cdot2}{m^2}
       +\frac{12(1/4)^2 4}{m^3}+\frac{6(1/4)^4 8}{m^4}<42.
\end{align*}
The derivatives of $A$ integrate these cumulants with factors $(-5u)^3$
and $(5u)^4$. Since $u^3,u^4\le u^2$, \eqref{eq:V-integral} proves the claim.
\end{proof}

\section{Combining the four amplitudes}\label{sec:amplitude}
Put $c_s=1/2-s/5$ and
\begin{equation}\label{eq:lambda}
 \lambda_h(z)=\exp\left(\sum_{s=1}^4c_s
       \log\frac{1-\zeta^{-hs}e^{-5z}}{1-\zeta^{-hs}}\right),\qquad 1\le h\le4.
\end{equation}
Equivalently,
\begin{equation}\label{eq:Lambda}
 \lambda_h(z)=\Lambda_h(e^{-5z}),\qquad
 \Lambda_h(X)=\mu_h\prod_{s=1}^4(1-\zeta^{-hs}X)^{c_s},\quad
 \mu=(-\zeta^2,1,1,-\zeta^3).
\end{equation}
The branches in the latter product are fixed at $X=0$.
The equality follows by pairing $s$ and $5-s$ in the endpoint constants;
$-\zeta^2=e^{-\pi\ii/5}$ and $-\zeta^3=e^{\pi\ii/5}$.
Define
\begin{equation}\label{eq:Gamma}
 \Gamma_r(z)=\sum_{h=1}^4\zeta^{-hr}\lambda_h(z),\qquad
 \mathcal G_r(X)=\sum_{h=1}^4\zeta^{-hr}\Lambda_h(X).
\end{equation}
For real $t$, $|\lambda_h(t)|=1$, $\lambda_{5-h}(t)=\overline{\lambda_h(t)}$,
and $\Gamma_r(t)$ is real.

\begin{lemma}\label{lem:amplitude-sign}
For real $t\ge0$, with $x=e^{-5t}$,
\begin{equation}\label{eq:Gamma-sign}
 \Gamma_0>2,\quad \Gamma_1<-\frac34,\quad \Gamma_2\le-1,\quad
 \Gamma_3\le-\frac{3x}{4},\quad \Gamma_4\le-\frac{7x}{10}.
\end{equation}
In particular, $|\Gamma_r(t)|\ge7x/10$ for every residue class.
\end{lemma}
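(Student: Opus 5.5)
Since $\lambda_{5-h}(t)=\overline{\lambda_h(t)}$ and $|\lambda_h(t)|=1$ for real $t$, I would pair $h$ with $5-h$ and write
\[
 \Gamma_r(t)=2\Rea\bigl(\zeta^{-r}\lambda_1(t)\bigr)+2\Rea\bigl(\zeta^{-2r}\lambda_2(t)\bigr).
\]
Each $\lambda_h(t)$ is a unit complex number $e^{\ii\theta_h(t)}$. Its phase is
\[
 \theta_h(t)=\arg\mu_h+\sum_s c_s\arg(1-\zeta^{-hs}x),
\]
with the branch fixed at $x=0$, so that $\theta_1(\infty)=-\pi/5$ and $\theta_2(\infty)=0$. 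Hence
\[
 \Gamma_r=2\cos\bigl(\theta_1-2\pi r/5\bigr)+2\cos\bigl(\theta_2-4\pi r/5\bigr).
\]
The proof reduces to controlling the two phases $\theta_1,\theta_2$ as functions of $x\in(0,1]$.

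\textbf{Step 1: expand the phases in $x$.} Pair $s$ with $5-s$, using $c_{5-s}=-c_s$ and conjugate roots. This gives
\[
 \theta_h=\arg\mu_h+\sum_{s=1,2}c_s\bigl[\arg(1-\zeta^{-hs}x)-\arg(1-\zeta^{hs}x)\bigr]
 =\arg\mu_h+2\sum_{s=1,2}c_s\arg(1-\zeta^{-hs}x).
\]
Expanding $\arg(1-wx)=-\Ima\sum_k w^kx^k/k$ yields the convergent series
\[
 \theta_h=\arg\mu_h+2\sum_{k\ge1}\frac{x^k}{k}\sum_{s=1,2}c_s\sin(2\pi hsk/5),
\]
whose coefficients are periodic in $k$ with period $5$. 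Summed in closed form, these are combinations of $\arg(1-\zeta^jx)$ with rational weights. I would compute the linear terms $\theta_h=\theta_h(\infty)+\alpha_hx+O(x^2)$ explicitly, together with the value at $x=1$. The value at $x=1$ should be consistent with $\Lambda_h(1)$, which comes from the endpoint normalization $\lambda_h(0)=1$, so that $\Gamma_r(0)=\sum_h\zeta^{-hr}$, equal to $4$ or $-1$.

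\textbf{Step 2: the classes $r=0,1,2$.} Here $\Gamma_r(\infty)$ is bounded away from zero:
\[
 \Gamma_0(\infty)=2\cos(\pi/5)+2=2+\phi>2,\qquad \Gamma_1(\infty)=2\cos(3\pi/5)+2\cos(4\pi/5)=-(1/\phi)-\phi,
\]
and $\Gamma_2(\infty)$ is similar. It then suffices to bound the phase excursions $|\theta_h(t)-\theta_h(\infty)|$ uniformly on $x\in[0,1]$, using monotonicity of $\arg(1-wx)$ in $x$ for fixed $w$. I would then check the stated inequalities with the cosine bounds from the certificates. For $\Gamma_0$, showing that both cosines stay positive with enough margin should suffice. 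For $\Gamma_1$ and $\Gamma_2$ the required bounds $-3/4$ and $-1$ are weaker than the endpoint values, so crude phase bounds on $[0,1]$ should work. A subdivision of $[0,1]$ with rational interval arithmetic would be the fallback.

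\textbf{Step 3: the cancelling classes $r=3,4$ (main obstacle).} At $x=0$ we get $\Gamma_3(\infty)=2\cos(-\pi/5-6\pi/5)+2\cos(-8\pi/5)$. This is zero, which mirrors the vanishing of residues $3,4$ in \eqref{eq:dissection}, so the bound must come from the linear term. I would factor exactly, as the paper announces: write $\mathcal G_r(X)=X\cdot\widetilde{\mathcal G}_r(X)$, using that $\mathcal G_3(0)=\mathcal G_4(0)=0$ as holomorphic functions of $X$. Then I would show $\widetilde{\mathcal G}_3(x)\le-3/4$ and $\widetilde{\mathcal G}_4(x)\le-7/10$ on $[0,1]$. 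The value $\widetilde{\mathcal G}_r(0)=\mathcal G_r'(0)$ is computed from $\Lambda_h'(0)=-\mu_h\sum_sc_s\zeta^{-hs}$, and should be noticeably more negative than the thresholds. For the remainder, I would use the real-variable representation
\[
 \Gamma_r=-4\sin\bigl(\tfrac{\theta_1-2\pi r/5+\theta_2-4\pi r/5}{2}\bigr)\sin\bigl(\tfrac{\theta_1-\theta_2+2\pi r/5}{2}\bigr)\quad\text{(sum-to-product)},
 \]
in which one sine factor vanishes at $x=0$ and equals approximately $(\alpha_1+\alpha_2)x/2$ plus higher terms. Bounding it below by a constant times $x$ via concavity or convexity of the phase sums on $[0,1]$, and the other factor away from zero, gives the claim. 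I expect this step, and especially the regime near $x=1$ where the series converge slowly, to need the explicit rational certificates on a few subintervals. The final line of the lemma then follows because $x\le1$ makes $7x/10$ the smallest of the five bounds.
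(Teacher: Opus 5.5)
\textbf{Genuine gap in Step 2: the case $\Gamma_2$.} Your reduction is the paper's. With $\theta=2\pi/5$, $a=\arg(1-xe^{-\ii\theta})$ and $b_0=\arg(1-xe^{-2\ii\theta})$, the phases are $\theta_1=-\pi/5+\delta_1$ and $\theta_2=\delta_2$, where $\delta_1=(3a+b_0)/5$ and $\delta_2=(3b_0-a)/5$.

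You noted in Step 1 that $\Gamma_2(0)=\sum_h\zeta^{-2h}=-1$. So the bound $\Gamma_2\le-1$ is attained at $t=0$ ($x=1$); it is not weaker than the endpoint value. No bound with slack, whether crude or obtained by subdivision, can prove it near $x=1$.

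Write $\Gamma_2=-2\cos\delta_1+2\cos(2\pi/5+\delta_2)$. What you need is exactly $\delta_1\le\pi/5$ and $\delta_2\ge0$.
\begin{itemize}
\item The first follows from monotonicity of $a$ and $b_0$ in $x$.
\item The second, $3\arg(1-xe^{-2\ii\theta})\ge\arg(1-xe^{-\ii\theta})$, is an equality at both $x=0$ and $x=1$. Monotonicity of the two arguments separately gives only $\delta_2\ge-3\pi/50$, hence only $\Gamma_2\le-2\cos(\pi/5)+2\cos(17\pi/50)\approx-0.65$.
\end{itemize}
The paper closes this as follows. The ratio $D_2/D_1$ increases from $1$ to $(3+\sqrt5)/2$. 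Hence $5\delta_2'=3\sin2\theta/D_2-\sin\theta/D_1$ changes sign exactly once, from positive to negative. Together with $\delta_2(0)=\delta_2(1)=0$, this forces $\delta_2\ge0$.

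For $\Gamma_0$ and $\Gamma_1$ your crude bounds do suffice. For $\Gamma_1$ the margin is thin: they give $\Gamma_1\le2\cos(2\pi/5)+2\cos(37\pi/50)\approx-0.751$.

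\textbf{Problems in Step 3.}
\begin{itemize}
\item Your displayed product formula is the identity for $2\cos A-2\cos B$, not for $2\cos A+2\cos B$. At $x=0$, $r=3$ it gives $-4\sin(\pi/10)\ne0=\Gamma_3(\infty)$.
\item The correct identity $4\cos\frac{A+B}{2}\cos\frac{A-B}{2}$ yields, with $\Delta_\pm=\delta_1\pm\delta_2$,
\[\Gamma_3=-4\sin(\theta-\Delta_+/2)\sin(\Delta_-/2),\qquad \Gamma_4=-4\sin(\pi/5+\Delta_-/2)\sin(\Delta_+/2).\]
So the vanishing factor differs between the two classes.
\item For $r=4$ your concavity idea works: $\Delta_+$ is concave, and its chord gives $\Delta_+\ge\pi x/5$.
\item For $r=3$ it does not. $\Delta_-$ is neither concave nor convex on $[0,1]$: its derivative is about $0.53$, $0.69$ and $0.49$ at $x=0,\tfrac12,1$. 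It also lies below its chord $\pi x/5$ near $x=0$.
\end{itemize}
The paper instead proves $\Delta_-'>12/25$, using $D_1D_2=D(x)$ and a polynomial lower bound. This constant is nearly sharp at $x=1$. Combined with $0\le\Delta_\pm\le\pi/5$, $\sin(3\pi/10)>4/5$ and $\sin u\ge(49/50)u$ on $[0,\pi/10]$, it gives $\Gamma_3\le-0.7526\,x$. That is only just below $-3x/4$, so the constants must be tracked this precisely.
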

\begin{proof}
Let $\theta=2\pi/5$, $a=\arg(1-xe^{-\ii\theta})$,
and $b_0=\arg(1-xe^{-2\ii\theta})$.
Then $0\le a\le3\pi/10$, $0\le b_0\le\pi/10$, and $a>b_0$ if $x>0$.
The phases of $\lambda_1,\lambda_2$ are $-\pi/5+\delta_1,\delta_2$, where
\[
 \delta_1=(3a+b_0)/5,\quad \delta_2=(3b_0-a)/5.
\]
Write $D_1=1+x^2-2x\cos\theta$, $D_2=1+x^2-2x\cos2\theta$.
The ratio $D_2/D_1$ increases from $1$ to $(3+\sqrt5)/2$.
The derivative
$\delta_2'=[3\sin(2\theta)/D_2-\sin\theta/D_1]/5$
therefore changes sign once, from positive to negative.
Since $\delta_2(0)=\delta_2(1)=0$, it follows that
\[
 0\le\delta_2\le\pi/25,\qquad 0\le\delta_1\le\pi/5.
\]
Put $\Delta_-=\delta_1-\delta_2=(4a-2b_0)/5$
and $\Delta_+=\delta_1+\delta_2=(2a+4b_0)/5$.
We claim
\begin{equation}\label{eq:Delta}
 \Delta_-\ge\frac{12x}{25},\qquad \Delta_+\ge\frac{\pi x}{5},
 \qquad 0\le\Delta_\pm\le\frac\pi5.
\end{equation}
For the first inequality, $D_1D_2=D(x)$ and
$\sin\theta>19/20$, $\sin2\theta<3/5$ give
$\Delta_-'>(19D_2-6D_1)/(25D)$.
After subtraction of $12D$, the numerator is
\[
 1+\frac{25\sqrt5-11}{2}x+x^2-12x^3-12x^4
 \ge1-\frac{13}{8}x+x^2\ge\frac{87}{256}>0.
\]
Here $\sqrt5>223/100$ and $x^3,x^4\le x$ were used.
For the second inequality it suffices to show that $\Delta_+$ is concave.
Its second derivative satisfies
\[
 5\Delta_+''=\frac{4\sin\theta(\cos\theta-x)}{D_1^2}
             +\frac{8\sin2\theta(\cos2\theta-x)}{D_2^2}.
\]
Both terms are nonpositive for $x\ge\cos\theta$.
For $0\le x<\cos\theta<31/100$, bound the positive term by
$4(119/125)(31/100)/(9/10)^2$, and the modulus of the negative term from
below by
\[
 \frac{8(587/1000)(809/1000)}{
 [1+(81/50)(31/100)+(31/100)^2]^2}.
\]
The latter is larger, by a strict rational comparison.
Concavity and the endpoint values $0,\pi/5$ prove the assertion.
The upper bound for $\Delta_+$ follows from the bounds for $a,b_0$.
For $\Delta_-$, the strictly positive derivative proved above and the
endpoint $\Delta_-(1)=\pi/5$ give the upper bound.

Adding the four phases gives
\begin{align*}
 \Gamma_3&=-4\sin(\theta-\Delta_+/2)\sin(\Delta_-/2),\\
 \Gamma_4&=-4\sin(\pi/5+\Delta_-/2)\sin(\Delta_+/2).
\end{align*}
On $[0,\pi/10]$, $\sin u\ge(49/50)u$.
The other sine factors are at least $\sin(3\pi/10)>4/5$ and
$\sin(\pi/5)>7/12$, respectively.
Equations~\eqref{eq:Delta} give the stated bounds for $\Gamma_3,\Gamma_4$.
For the remaining classes,
\begin{align*}
 \Gamma_0&=2\cos(-\pi/5+\delta_1)+2\cos\delta_2>2,\\
 \Gamma_1&=2\cos(3\pi/5-\delta_1)+2\cos(4\pi/5-\delta_2)
          <(\sqrt5-1)/2-\sqrt2<-3/4,\\
 \Gamma_2&=-2\cos\delta_1+2\cos(2\pi/5+\delta_2)\le-1.
\end{align*}
The auxiliary trigonometric inequalities follow by squaring the standard
expressions in $\sqrt5$ or by the rational bounds of the appendix.
\end{proof}

\begin{lemma}\label{lem:disk}
For $r=3,4$, $\mathcal B_r(X)=-\mathcal G_r(X)/X$ has a removable singularity
at zero and $\mathcal B_r(0)=1$.
On $|X|\le1/10$,
\begin{align}
 |\mathcal B_r(X)-1|&\le\frac{72|X|}{25(1-|X|)},&
 |\mathcal B_r'(X)|&\le\frac{32}{9},\label{eq:B-disk}\\
 |\mathcal G_r(X)-\mathcal G_r(0)|&\le\frac{16|X|}{5(1-|X|)},&
 |\mathcal G_r'(X)|&\le\frac{320}{81}\quad(r\in\ro).\label{eq:G-disk}
\end{align}
For $x\le1/12$, the complex-to-real amplitude ratios at $|X|=x$ are $<7/4$.
Also $|\Lambda_h(X)|<12/11$ there.
\end{lemma}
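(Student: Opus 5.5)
The plan is to work directly with the power series of each factor $(1-\zeta^{-hs}X)^{c_s}$ about $X=0$ and to use cancellation in the root-of-unity sums. First I would show that $\mathcal G_r(0)=0$ for $r=3,4$ and that $\mathcal G_r'(0)=-1$; together these give removability and $\mathcal B_r(0)=1$. The value at zero is $\mathcal G_r(0)=\sum_h\zeta^{-hr}\mu_h$ with $\mu=(-\zeta^2,1,1,-\zeta^3)$, which is a finite sum of fifth roots of unity and can be checked by hand. For the derivative, $\Lambda_h'(0)=-\mu_h\sum_s c_s\zeta^{-hs}$. The coefficients $c_s=(3,1,-1,-3)/10$ make $\sum_h\zeta^{-hr}\mu_h\sum_sc_s\zeta^{-hs}$ an explicit cyclotomic expression, and I would verify that it equals $1$ for $r=3,4$. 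This is also the identity behind $\Gamma_3\approx -x$ and $\Gamma_4\approx-x$ in Lemma~\ref{lem:amplitude-sign}, so it is consistent with that lemma.

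For the quantitative bounds I would expand $\log\Lambda_h(X)-\log\mu_h=-\sum_{k\ge1}\frac{X^k}{k}\sum_s c_s\zeta^{-hsk}$. Since $\sum_s|c_s|=4/5$, the coefficient of $X^k$ has modulus at most $4/(5k)$. Hence on $|X|\le x$ we get $|\log(\Lambda_h/\mu_h)|\le\frac45\log\frac1{1-x}$, and $|\Lambda_h(X)|\le(1-x)^{-4/5}$, which is below $12/11$ for $x\le1/12$. Writing $\Lambda_h=\mu_h\exp(L_h)$ gives a majorant series with coefficients bounded by those of $(1-X)^{-4/5}$. Because each $\Lambda_h$ is majorized coefficientwise, $|\Lambda_h(X)-\mu_h|\le(1-|X|)^{-4/5}-1$ and $|\Lambda_h'(X)|\le\frac45(1-|X|)^{-9/5}$.

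Summing over the four roots gives, for $r\in\ro$, a bound of the form $4\cdot\frac45\cdot\frac{|X|}{1-|X|}=\frac{16|X|}{5(1-|X|)}$. This uses the elementary estimate $(1-y)^{-4/5}-1\le\frac45\frac{y}{1-y}$, which follows from the coefficient comparison $\binom{4/5+k-1}{k}\le\frac45$. For the derivative, the same sum gives at most $4\cdot\frac45(9/10)^{-9/5}$, and I would check that this is at most $320/81=\frac{16}{5}(10/9)^2$ by a rational comparison, using $(10/9)^{9/5}\le(10/9)^2$.

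For $\mathcal B_r$, I would write $-\mathcal G_r(X)/X-1=-\sum_{k\ge2}g_kX^{k-1}$, where $g_k$ are the Taylor coefficients of $\mathcal G_r$. The crude majorant gives $|g_k|\le4\binom{4/5+k-1}{k}$, and I expect the main obstacle here: the constant $72/25$ needs a sharper bound than this. The step I would try first is to bound $|g_k|\le\frac{72}{25}$ for every $k\ge2$ directly, by computing $g_2$ and $g_3$ exactly as cyclotomic numbers and estimating the tail with the majorant. Summing geometrically then gives $\frac{72|X|}{25(1-|X|)}$.

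For $\mathcal B_r'$, I would use Cauchy's estimate on a slightly larger circle, or termwise differentiation $\sum (k-1)|g_k||X|^{k-2}\le\frac{72}{25}(1-|X|)^{-2}$. This gives $\frac{72}{25}\cdot\frac{100}{81}=\frac{32}{9}$ exactly, which suggests this is the intended route.

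Finally, the statement about complex-to-real amplitude ratios at $|X|=x$ compares $|\mathcal G_r(X)|$ with $|\Gamma_r(t)|$, where $x=e^{-5t}$.
\begin{itemize}
\item For $r=3,4$, I would use $|\mathcal B_r(X)|\le1+\frac{72x}{25(1-x)}$, together with $\Gamma_3\le-3x/4$ and $\Gamma_4\le-7x/10$ from Lemma~\ref{lem:amplitude-sign}.
\item For $r\in\ro$, I would use $|\mathcal G_r(X)|\le|\mathcal G_r(0)|+\frac{16x}{5(1-x)}$, together with the real lower bounds $2$, $3/4$ and $1$ from the same lemma, after computing $\mathcal G_r(0)$ exactly.
\end{itemize}
In each case I would close the argument with a rational check at $x=1/12$, since these bounds are monotone in $x$.
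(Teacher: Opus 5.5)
Everything up to the last claim follows the paper's argument: the values $\mathcal G_r(0)$, the formula $\mathcal G_r'(0)=-\sum_s c_s\mathcal G_{r+s}(0)=-1$ for $r=3,4$, and the coefficientwise majorant $(1-X)^{-4/5}$. The obstacle you anticipate at $72/25$ does not exist. With $a_k=(4/5)_k/k!$ one has $a_{k+1}/a_k=(k+4/5)/(k+1)<1$ and $4a_2=4\cdot\frac{(4/5)(9/5)}{2}=\frac{72}{25}$ exactly. So the crude bound $|g_k|\le 4a_k$ already gives $|g_k|\le 72/25$ for every $k\ge2$, and no cyclotomic evaluation of $g_2,g_3$ is needed.

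\textbf{The gap is in the final claim}, that the complex-to-real ratio is $<7/4$. The real-amplitude lower bounds you take from Lemma~\ref{lem:amplitude-sign} hold uniformly in $t$ and are too weak near $x=1/12$. At $x=1/12$ one has $\frac{72x}{25(1-x)}=\frac{72}{275}$ and $\frac{16x}{5(1-x)}=\frac{16}{55}$. Your route then gives:
\begin{itemize}
\item for $r=4$: $\frac{347/275}{7/10}=\frac{694}{385}>\frac74$;
\item for $r=0$: at least $\frac{\phi+2}{2}=\frac{5+\sqrt5}{4}>\frac74$;
\item for $r=1$: at least $\frac{\sqrt5}{3/4}>\frac74$.
\end{itemize}
Only $r=2$ (about $1.67$) and $r=3$ ($1388/825$) close. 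The rational check at $x=1/12$ that you propose would therefore fail in three of the five classes.

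\textbf{The fix} is to bound the denominator with the lower half of your own disk estimates, taken at the real point $X=x$.
\begin{itemize}
\item For $r\in\rc$: $\mathcal B_r(x)\ge 1-\frac{72}{275}=\frac{203}{275}$, so the ratio is at most $\frac{347}{203}<\frac74$.
\item For $r\in\ro$: $|\mathcal G_r(0)|\in\{\phi+2,\sqrt5,3-\phi\}$, and all three exceed $4/3$. Hence $|\mathcal G_r(x)|\ge|\mathcal G_r(0)|-\frac{16}{55}\ge\frac{172}{165}$. Since $(g+\epsilon)/(g-\epsilon)$ decreases in $g$, the ratio is at most $\frac{4/3+16/55}{4/3-16/55}=\frac{67}{43}<\frac74$.
\end{itemize}
Both ratio bounds increase with $x$, so the check at $x=1/12$ covers all $x\le1/12$. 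This is how the paper concludes the lemma.
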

\begin{proof}
For $\phi=(1+\sqrt5)/2$, computation in $\QQ[\zeta]$ gives
\[
 (\mathcal G_0(0),\ldots,\mathcal G_4(0))
    =(\phi+2,-\sqrt5,\phi-3,0,0).
\]
Differentiating the product at zero gives
$\mathcal G_r'(0)=-\sum_s c_s\mathcal G_{r+s}(0)$, where subscripts are
read modulo $5$. In particular $\mathcal G_3'(0)=\mathcal G_4'(0)=-1$.
The coefficient of $X^\ell$ in each $\Lambda_h$ is bounded in modulus by
$a_\ell=(4/5)_\ell/\ell!$, because $\sum_s|c_s|=4/5$.
Now $a_1=4/5$, $a_2=18/25$, and $a_\ell$ decreases.
Summing the resulting geometric majorants proves \eqref{eq:B-disk}--\eqref{eq:G-disk}.
At $x\le1/12$ we have
\[
 \frac{203}{275}\le\mathcal B_r(x),\qquad
 |\mathcal B_r(X)|\le\frac{347}{275},\qquad
 \frac{347}{203}<\frac74.
\]
For the other classes, $|\mathcal G_r(0)|>4/3$;
the corresponding lower bound is $172/165>1$, and the ratio is at most
$67/43<7/4$. Finally $(1-x)^{-4/5}<12/11$.
\end{proof}

In the center, let
$H_r(v)=e^{\ii v/(6n)}\Gamma_r(t-\ii v)/\Gamma_r(t)$.
Bounds needed below follow directly from \eqref{eq:lambda}:
\begin{equation}\label{eq:H-center}
 H_r(v)=1+\ii\ell v+R_H(v),\quad
 \ell\in\RR,\quad |\ell|<40,\quad |R_H(v)|\le220v^2
       \quad(|v|\le1/20).
\end{equation}
Indeed, on this inner arc the denominators are $>4/5$,
$| (\log\lambda_h)'|\le5x$, $| (\log\lambda_h)''|\le125x/4$,
$|\lambda_h|<e^{1/4}<4/3$, and hence $|\lambda_h''|\le75x$.
Division by $|\Gamma_r(t)|\ge7x/10$ gives derivative bounds $39$ and $430$.
The factor $e^{\ii v/(6n)}$ then gives $|H_r''|<440$.
Real symmetry makes $H_r'(0)$ purely imaginary.

On the larger central arcs we similarly have
\begin{equation}\label{eq:H-outer}
 |H_r(v)|\le1+55|v|\quad(|v|\le1/10),\qquad
 |H_r(v)|\le1+229|v|\quad(|v|\le1/5).
\end{equation}
For the first, the denominators are $>2/3$ and $|\lambda_h|<8/5$;
for the second they are $>1/4$ and $|\lambda_h|<5/2$.
Both derivative bounds include the derivative of $e^{\ii v/(6n)}$.

\section{Modular transformation and local expansions}\label{sec:local}
Write $P(q)=(q;q)_\infty$.
The eta transformation convention we use is
\begin{equation}\label{eq:eta-transform}
 \eta\left(\frac{a\tau+b}{m\tau+d}\right)
  =\exp\left[\pi\ii\left(\frac{a+d}{12m}+s(-d,m)\right)\right]
     [-\ii(m\tau+d)]^{1/2}\eta(\tau),
\end{equation}
for determinant $1$ and $m>0$, with the continuous square root in the
right half-plane; see \cite[Section 23.18]{DLMF}.
Here $s(h,m)$ is the Dedekind sum.

\begin{lemma}\label{lem:primitive-modular}
For $\Rea z>0$, the following identity holds:
\begin{equation}\label{eq:primitive-modular}
 S_\infty(\zeta^he^{-z/n})
 =\mu_he^{cn/z-z/(6n)}
 \frac{P(\zeta^{-h^{-1}}e^{-\kappa n/z})}{P(e^{-5\kappa n/z})}.
\end{equation}
The inverse $h^{-1}$ is taken modulo $5$.
\end{lemma}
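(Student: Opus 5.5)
The identity \eqref{eq:primitive-modular} should follow from two applications of the eta transformation \eqref{eq:eta-transform}, one for the numerator $P(q)$ and one for the denominator $P(q^5)$ of $S_\infty(q)$. The only real work is the bookkeeping of phases.

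\emph{Coordinates.} Put $q=\zeta^he^{-z/n}=e^{2\pi\ii\tau}$ with
\[
\tau=\frac h5+\frac{\ii z}{2\pi n}.
\]
Then $\Ima\tau>0$ exactly when $\Rea z>0$, and $P(q)=q^{-1/24}\eta(\tau)$. Since $\zeta^{5h}=1$, we have $q^5=e^{-5z/n}$. Hence the denominator is $P(e^{-w})$ with $w=5z/n$ real-positive-part.

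\emph{Denominator.} For $P(e^{-w})$ I would use the classical inversion $\tau\mapsto-1/\tau$, which is \eqref{eq:eta-transform} with $(a,b;m,d)=(0,-1;1,0)$:
\[
P(e^{-w})=\sqrt{2\pi/w}\,e^{\pi^2/(6w)-w/24}\,P\bigl(e^{-4\pi^2/w}\bigr).
\]
Since $\kappa=6c=4\pi^2/25$, we get $4\pi^2/w=4\pi^2n/(5z)=5\kappa n/z$. This is precisely the denominator on the right of \eqref{eq:primitive-modular}.

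\emph{Numerator: choice of matrix.} I take $m=5$ and $d=-h$, so that $m\tau+d=5\ii z/(2\pi n)$ is small. I choose integers $a,b$ with $-ah-5b=1$; this forces $a\equiv-h^{-1}\pmod5$. A direct computation gives
\[
\frac{a\tau+b}{5\tau-h}=\frac a5-\frac1{5(5\tau-h)}=\frac a5+\frac{2\pi\ii n}{25z}.
\]
Its exponential is therefore $e^{2\pi\ii(a\tau+b)/(5\tau-h)}=\zeta^{a}e^{-4\pi^2n/(25z)}=\zeta^{-h^{-1}}e^{-\kappa n/z}$. This is the argument of $P$ in the numerator of \eqref{eq:primitive-modular}.

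\emph{Numerator: transformation.} Solving \eqref{eq:eta-transform} for $\eta(\tau)$ gives a factor $[-\ii(5\tau-h)]^{-1/2}=(5z/(2\pi n))^{-1/2}$. This cancels exactly against the factor $\sqrt{2\pi/w}$ from the denominator, which is why no power of $z$ appears in \eqref{eq:primitive-modular}. It remains to convert $\eta$ back to $P$, which introduces the $q^{-1/24}$-type factors on both sides.

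\emph{Collecting exponentials.} The real-analytic factors are:
\begin{itemize}
\item $e^{\pi^2n/(30z)}$ from the denominator inversion;
\item $e^{-2\pi^2n/(25\cdot 12 z)\cdot 12/12}$-type terms, i.e.\ the $1/24$-shifts $e^{-(\kappa n/z)/24}$ and $e^{-(5\kappa n/z)/24}$ on the transformed side;
\item $e^{z/(24n)}$ and $e^{5z/(24n)}$ from $q^{-1/24}$ and $q^{-5/24}$ on the original side.
\end{itemize}
These should combine to $e^{cn/z-z/(6n)}$. Using $c=2\pi^2/75$, this is a finite check in $\QQ\pi^2 n/z$ and $\QQ z/n$. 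For instance, the $z/n$ exponents are $1/24-5/24=-1/6$, which matches.

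\emph{Constant phase.} What remains is a root of unity. It assembles from:
\begin{itemize}
\item $\zeta^{-h/24}$ coming from $q^{-1/24}$;
\item the transformed-side factor $(\zeta^{a})^{1/24}$;
\item the eta multiplier $\exp[\pi\ii((a-h)/60+s(h,5))]$;
\item the branch of the square root.
\end{itemize}

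\emph{Main obstacle.} The expected difficulty is checking that this phase equals $\mu_h=(-\zeta^2,1,1,-\zeta^3)$ independently of the choice of $a,b$. I would fix explicit representatives, for example $(a,b)=(-1,0),(2,-1),(3,-2),(1,-1)$ for $h=1,2,3,4$. I would then use the Dedekind sums $s(1,5)=1/5$, $s(2,5)=s(3,5)=0$ and $s(4,5)=-1/5$. Finally I would evaluate the phase as an element of $e^{\pi\ii\QQ}$ for each $h$.

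As a consistency check, the cases $h$ and $5-h$ must give complex-conjugate phases. This is guaranteed because $S_\infty$ has real coefficients and the right side respects conjugation. It also matches $\mu_1=\overline{\mu_4}$ and $\mu_2=\mu_3=1$.

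The square-root branches cause no ambiguity: for $\Rea z>0$ every quantity under a root lies in the right half-plane. So the identity, once checked for real $z>0$, extends by analytic continuation to all of $\Rea z>0$.
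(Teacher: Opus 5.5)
Your route is the paper's: the matrix $\left(\begin{smallmatrix}a&b\\5&-h\end{smallmatrix}\right)$ acting on $\eta(\tau)$, inversion on the denominator, cancellation of the two square roots, and a leftover root of unity. The square roots both equal $\sqrt{2\pi/w}$ on the principal branch, so they carry no phase. Writing the denominator directly as $P(e^{-w})$ with $w=5z/n$ is slightly cleaner than the paper. The paper transforms $\eta(5\tau)$ and must then cancel the translation phase $e^{-\pi\ii h/12}$ against part of $q^{1/6}$.

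However, the bookkeeping is wrong as you wrote it, and once the matrix is fixed that bookkeeping is all the lemma contains. Your inversion formula has the exponent reversed. The correct formula is
\[
 P(e^{-w})=\sqrt{2\pi/w}\;e^{-\pi^2/(6w)+w/24}\,P\bigl(e^{-4\pi^2/w}\bigr),
\]
as it must be, since $P(e^{-w})\to0$ as $w\downarrow0$. Your list of exponentials then counts one factor twice. Since $\pi^2/(6w)=5\kappa n/(24z)$, the ``inversion'' factor and the ``$1/24$-shift'' $e^{-(5\kappa n/z)/24}$ are the same factor. Likewise $e^{\pm w/24}$ is your $q^{-5/24}$ factor. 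If both copies are counted, no choice of signs yields $cn/z$; as you listed them, the $n/z$-exponent comes out as $-\kappa n/(24z)$. The correct tally is as follows: $P(q)$ contributes $e^{z/(24n)-\kappa n/(24z)}$, and $1/P(e^{-w})$ contributes $e^{-5z/(24n)+5\kappa n/(24z)}$. Together these give $\kappa n/(6z)-z/(6n)=cn/z-z/(6n)$, because $\kappa=6c$.

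The phase, which you call the main obstacle, needs no case-by-case evaluation with explicit representatives. Solving \eqref{eq:eta-transform} for $\eta(\tau)$ produces the \emph{inverse} multiplier $e^{-\pi\ii[(a-h)/60+s(h,5)]}$. Your phase list does not say this, and with the multiplier uninverted the $a$-terms would not cancel. The remaining phases are $e^{-\pi\ii h/60}$ from $e^{-2\pi\ii\tau/24}$ and $e^{\pi\ii a/60}$ from $e^{2\pi\ii\tau'/24}$, where $\tau'=a/5+2\pi\ii n/(25z)$. The $a$- and $h$-terms then cancel identically for every admissible $(a,b)$, leaving $e^{-\pi\ii s(h,5)}$. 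Since $s(h,5)=(1/5,0,0,-1/5)$, this is $\mu_h$. With these corrections your argument is complete and agrees with the paper's proof.
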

\begin{proof}
Put $w=z/n$ and $\tau=h/5+\ii w/(2\pi)$.
Choose a matrix $\left(\begin{smallmatrix}a&b\\5&-h\end{smallmatrix}\right)$
of determinant $1$, so $a\equiv-h^{-1}\pmod5$.
Its transformed argument is $a/5+2\pi\ii/(25w)$.
Transforming $\eta(\tau)$ and $\eta(5\tau)$ in
$S_\infty(q)=q^{1/6}\eta(\tau)/\eta(5\tau)$ cancels the square roots.
The phase exponent, divided by $\pi\ii$, is
\[
 -(a-h)/60-s(h,5)-h/12+a/60+h/15=-s(h,5).
\]
Since $s(h,5)=(1/5,0,0,-1/5)$, the phase is $\mu_h$.
The transformed product parameters and their leading exponentials give
\eqref{eq:primitive-modular}.
The factor $q^{1/6}$ is evaluated using the same $\tau$ throughout;
no replacement by a different principal logarithm is made for $h=3,4$.
\end{proof}

\begin{lemma}\label{lem:central-EM}
For $n\ge N$, $0\le t\le1/2$, and $|v|\le1/5$, put $z=t-\ii v$ and
$x=e^{-5t}$. Then
\begin{equation}\label{eq:central-EM}
 S_n(\zeta^he^{-z/n})
  =e^{nA(z)-z/(6n)}\lambda_h(z)(1+\epsilon_h),\qquad
 |\epsilon_h|<\frac{2x}n+\frac{250}{n^2}.
\end{equation}
If $L_{n,r}(z)=\sum_h\zeta^{-hr}S_n(\zeta^he^{-z/n})$, then
\begin{equation}\label{eq:central-EM-fold}
 \frac{|e^{-nA(z)+z/(6n)}L_{n,r}(z)-\Gamma_r(z)|}{|\Gamma_r(t)|}
 \le E_{\mathrm C}:=\frac{100}{7}\left(\frac2n+\frac{3250}{n^2}\right).
\end{equation}
\end{lemma}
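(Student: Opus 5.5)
We write $\log S_n(\zeta^he^{-z/n})=\sum_{j=1}^n\sum_{s=1}^4\log(1-\zeta^{h(5j-s)}e^{-(5j-s)z/n})$ and group by $s$. Since $\zeta^{5j}=1$, the $s$-th group is $\sum_{j=1}^n f_s(j/n)$ with
\[
 f_s(u)=\log\bigl(1-\zeta^{-hs}e^{-5zu+sz/n}\bigr).
\]
This is a Riemann sum with a shift $s/(5n)$ inside the argument. The plan is to compare it with $n\int_0^1$ of the corresponding function of $u$ by the Euler--Maclaurin formula to second order, for each fixed $s$. Summing over $s$, the integrals $\int_0^1\log(1-\zeta^{-hs}e^{-5zu})\dd u$ combine into $A(z)$. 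For this we use $\prod_s(1-\zeta^{-hs}Y)=D(Y)$ for $h$ coprime to $5$, valid in the $\log$ sense on the region where the branches are fixed by continuation from $v=0$.

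The first-order terms organise as follows. The shift by $s/5$ of a step, together with the endpoint correction $\tfrac12[f(1)-f(0)]$, produces exactly $c_s=1/2-s/5$ times the difference of the endpoint values. This gives $\sum_sc_s\log\frac{1-\zeta^{-hs}e^{-5z}}{1-\zeta^{-hs}}=\log\lambda_h(z)$. The singular behaviour at $u=0$ needs separate treatment: the endpoint value $\log(1-\zeta^{-hs})$ is finite because $\zeta^{-hs}\ne1$, so there is no real singularity. The term $-z/(6n)$, however, comes from the second-order Bernoulli term. One has $\sum_s B_2(s/5)$-type weights times $f_s'(1)-f_s'(0)$. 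The $u=0$ part involves $\sum_s \frac{\zeta^{-hs}}{1-\zeta^{-hs}}$ weighted by $B_2$, which is a fixed constant times $z/n$. I would not derive this constant by Euler--Maclaurin. Instead I would check that it equals $-z/(6n)$ against Lemma~\ref{lem:primitive-modular}: let $t\to0$ formally, or compare with the exact identity $S_n=S_\infty T_n$ at the leading order.

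An alternative, which I would actually prefer for rigour, is to write
\[
 S_n(\zeta^he^{-z/n})=S_\infty(\zeta^he^{-z/n})\,T_n(\zeta^he^{-z/n}).
\]
Lemma~\ref{lem:primitive-modular} evaluates $S_\infty$ exactly as $\mu_he^{cn/z-z/(6n)}$ times a ratio of $P$'s. For $|z|\le 1/2+1/5$ we have $\Rea(\kappa n/z)\gtrsim n$, so that ratio is $1+O(e^{-\kappa n\cdot\text{const}})$, which is negligible for $n\ge1750$. For the tail,
\[
 \log T_n=-\sum_{k>5n,5\nmid k}\log(1-\zeta^{hk}e^{-kz/n}),
\]
and we apply Euler--Maclaurin to it in each residue class $k=5j-s$, $j>n$. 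Here the summand is smooth and exponentially decaying at infinity, with no singularity at $0$. The integral part gives $-\sum_s\int_1^\infty\log(1-\zeta^{-hs}e^{-5zu})n\dd u$. Together with $cn/z=n\int_0^\infty\log D$ this reproduces $nA(z)$, via the dilogarithm expression \eqref{eq:A}. The first-order term gives $\sum_s c_s\log(1-\zeta^{-hs}e^{-5z})$, and combined with $\mu_h$ this gives $\lambda_h$ by \eqref{eq:Lambda}. The remainder is the second-order Euler--Maclaurin integral $\frac1{n}\int B_2$-periodic$\times f''$, together with the boundary term at $u=1$.

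The second-order term is where the bound $2x/n$ comes from. Its size is $\frac1n|\sum_s(\text{weight})\,f_s'(1)|$, and $f_s'(1)$ carries the factor $5e^{-5z}/(1-\zeta^{-hs}e^{-5z})$. With $|e^{-5z}|=x$ and denominators bounded below as in the amplitude section (by $>1/4$ for $|v|\le1/5$), this gives $|\cdot|\le Cx/n$. One then checks that the constant is at most $2$ after multiplying by the $B_2$ weights $|B_2(s/5)|\le 1/6$. The remainder integral $\frac1{n^2}\int_1^\infty|P_3\text{ or }P_2|\,|f''|$ is bounded by $\frac{C'}{n^2}\int_1^\infty |z|^2\cdot 25\,\frac{e^{-5tu}}{|1-\ldots|^2}\dd u$. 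This quantity is uniformly bounded even as $t\to0$, because the $u$-integral then grows like $1/t$ while $|z|^2$ stays bounded. Handling this is the \textbf{main obstacle}: near $t=0$ the factors $e^{-5zu}$ do not decay, and the integral must be controlled through oscillation or via the $1/t$ scaling. I would keep the naive bound and accept a constant like $250$, using $|z|\le 0.54$. Exponentiating, $|e^{w}-1|\le|w|e^{|w|}$ with $|w|$ tiny for $n\ge1750$ gives \eqref{eq:central-EM}.

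For \eqref{eq:central-EM-fold}, sum over $h$ with weights $\zeta^{-hr}$. This gives
\[
 e^{-nA+z/(6n)}L_{n,r}-\Gamma_r=\sum_h\zeta^{-hr}\lambda_h\epsilon_h .
\]
Bound it by $4\max|\lambda_h|\,(2x/n+250/n^2)$ with $|\lambda_h|<5/2$, and divide by $|\Gamma_r(t)|\ge7x/10$ from Lemma~\ref{lem:amplitude-sign}. The $x$ cancels in the first term. In the second term, $1/x\le e^{5/2}<13$ since $t\le1/2$, so $250\cdot13=3250$. This yields exactly $\frac{10}{7}\cdot 10\,(2/n+3250/n^2)$, matching $E_{\mathrm C}$.
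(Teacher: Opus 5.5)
Your folding step is correct and is the paper's: $\sum_h\zeta^{-hr}\lambda_h\epsilon_h$, $|\lambda_h|<5/2$, division by $|\Gamma_r(t)|\ge7x/10$, and $1/x\le e^{5/2}<13$.

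The gap is in the single-peak expansion, specifically in the route you say you would use for rigour. Factoring $S_n=S_\infty T_n$, invoking Lemma~\ref{lem:primitive-modular}, and running Euler--Maclaurin on $[1,\infty)$ is how the paper treats $t\ge1/2$ (Lemma~\ref{lem:halfline-EM}). It cannot cover the central range, for three reasons.

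(i) The lemma is needed at $t=0$, since the central degree $d=5n^2$ has its saddle there. At $t=0$ we have $|q|=1$, the infinite products $S_\infty$ and $T_n$ do not converge, and Lemma~\ref{lem:primitive-modular} requires $\Rea z>0$.

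(ii) Your claim $\Rea(\kappa n/z)\gtrsim n$ for $|z|\le7/10$ is false. In fact $\Rea(\kappa n/z)=\kappa nt/(t^2+v^2)\to0$ as $t\to0$ with $v\ne0$. So the transformed ratio $P(\zeta^{-h^{-1}}e^{-\kappa n/z})/P(e^{-5\kappa n/z})$ need not be close to $1$.

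(iii) The tail remainder is not $O(n^{-2})$ uniformly. On $[1,\infty)$ the phase $5vu$ winds around the circle, so $|1-\zeta^{-hs}e^{-5zu}|$ drops to about $1-e^{-5tu}$. Even with bounded denominators, your naive integral is $\asymp|z|^2/t$. Since $|z|\ge|v|$ does not tend to zero, the sentence ``uniformly bounded because the $u$-integral grows like $1/t$ while $|z|^2$ stays bounded'' contradicts itself. No constant like $250$ comes out this way.

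What works is the first route you sketched and then set aside, which is exactly the paper's. Apply shifted Euler--Maclaurin directly to the finite sums $\sum_{j=1}^nf_s((j-s/5)/n)$, with $f_s(u)=\log(1-\zeta^{-hs}e^{-5zu})$, on $[0,1]$.
\begin{itemize}
\item This is valid at $t=0$ because $S_n$ is a polynomial.
\item Since $|5vu|\le1<2\pi/5$, the points $\zeta^{-hs}e^{-5zu}$ stay in sectors avoiding the direction of $1$, so every denominator exceeds $1/4$. With $|z|<3/5$, this bounds $f_s'$, $f_s''$, $f_s'''$ uniformly.
\item Carry the expansion to the $B_3$ level, with remainder $\sup|B_3|\int_0^1|f_s'''|/(6n^2)$; the lower $B_3$ endpoint terms cancel over $s$. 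This gives the $125/n^2$. A remainder taken at the $B_2$ level would only be $O(1/n)$, with no factor $x$.
\item The constant you proposed to import by a formal $t\to0$ limit is a two-line computation. We have $B_2(1/5)=1/150$ and $B_2(2/5)=-11/150$. Pairing $s$ with $5-s$ via $w/(1-w)+w^{-1}/(1-w^{-1})=-1$ gives $\sum_sB_2(s/5)\zeta^{-hs}/(1-\zeta^{-hs})=1/15$. So the lower $B_2$ endpoint term is exactly $-\frac{5z}{2n}\cdot\frac1{15}=-z/(6n)$.
\item The upper term $b_h(z)/n$ satisfies $|b_h|<x$ only if you use the exact $\sum_s|B_2(s/5)|=4/25$. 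Your cruder $|B_2(s/5)|\le1/6$ gives only about $3.6x/n$ for the logarithmic error, which does not yield the stated $2x/n$.
\end{itemize}
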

\begin{proof}
For $f_s(u)=\log(1-\zeta^{-hs}e^{-5zu})$, apply shifted Euler--Maclaurin
summation to the points $(j-s/5)/n$. Through order three, the endpoint terms
have coefficients $B_k(1-s/5)/(k!n^{k-1})$; the remainder is bounded by
$\sup|B_3|\int_0^1|f_s'''|/(6n^2)$.
The first endpoint term gives \eqref{eq:lambda}.
For the second, the identity
\begin{equation}\label{eq:Bernoulli-pair}
 \sum_sB_2(s/5)\frac{\zeta^{-hs}}{1-\zeta^{-hs}}=\frac1{15}
\end{equation}
gives the lower endpoint term $-z/(6n)$ and the upper term $b_h(z)/n$, where
\begin{equation}\label{eq:bh}
 b_h(z)=\frac{5z}{2}\sum_sB_2(s/5)
       \frac{\zeta^{-hs}e^{-5z}}{1-\zeta^{-hs}e^{-5z}}.
\end{equation}
To verify \eqref{eq:Bernoulli-pair}, use
$B_2(s/5)=(1/150,-11/150,-11/150,1/150)$ and
$w/(1-w)+w^{-1}/(1-w^{-1})=-1$.
The lower third-order term vanishes: $B_3(1-x)=-B_3(x)$ whereas
$w/(1-w)^2$ is invariant under inversion.
The upper third-order term and the integral remainder are retained.

All denominators on the whole arc are $>1/4$ and $|z|<3/5$.
As $\sum_s|B_2(s/5)|=4/25$, $|b_h|<x$.
Using $\sup|B_3|<1/20$, the remaining constant is bounded by
\[
 \frac{4\cdot125(3/5)^3\cdot2}{120(1/4)^3}
 +\frac{4\cdot25(3/5)^2}{120(1/4)^2}=120<125.
\]
Thus the additional logarithmic error is $<x/n+125/n^2<1/10$.
The inequality $|e^u-1|\le2|u|$ for $|u|\le1/10$ proves
\eqref{eq:central-EM}.
The positive and negative $c_s$ each have total magnitude $2/5$, so
$|\lambda_h|\le8^{2/5}<5/2$ on this arc.
Divide the sum of the four errors by $7x/10$ from
Lemma~\ref{lem:amplitude-sign}, and use $x>1/13$.
\end{proof}

\begin{lemma}\label{lem:halfline-EM}
Suppose $n\ge N$, $1/2\le t\le\alpha_0\sqrt n$, and $|v|\le2t$, where
\begin{equation}\label{eq:alphas}
 \alpha=\sqrt{5c/49},\qquad \alpha_0=\sqrt c/2.
\end{equation}
Let $z=t-\ii v$, $x=e^{-5t}$, $u_*=e^{-\kappa n\Rea(1/z)}$, and
$Q_*=|z|/n+8|z|^2/n^2$.
The expansion \eqref{eq:central-EM}, with a different error bound, satisfies
\begin{equation}\label{eq:halfline-error}
 |\epsilon_h|\le xQ_*+4u_*.
\end{equation}
After summing the four peaks, the multiplicative relative error is bounded by
\begin{equation}\label{eq:folded-error}
 \begin{array}{lll}
 r\in\rc:&6Q_*+24u_*/x,&1/2\le t\le2,\\
 r\in\ro:&5xQ_*+20u_*,&1/2\le t\le2,\\
 r\in\rc:&5Q_*+20u_*/x,&2\le t\le\alpha\sqrt n,\\
 r\in\ro:&5xQ_*+20u_*,&2\le t\le\alpha_0\sqrt n.
 \end{array}
\end{equation}
In the third line, the comparison amplitude is $-e^{-5z}\mathcal B_r(e^{-5z})$.
\end{lemma}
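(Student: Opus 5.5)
Unlike the central Lemma~\ref{lem:central-EM}, the Euler--Maclaurin expansion is now in the wrong regime: $|z|$ can be as large as $\sqrt5\,\alpha_0\sqrt n$, so the derivatives of $f_s$ are no longer small. I would instead write
\[
 S_n(\zeta^he^{-z/n})=S_\infty(\zeta^he^{-z/n})\big/T'_{n,h}(z),\qquad
 T'_{n,h}=\prod_{k>5n,\,5\nmid k}(1-\zeta^{hk}e^{-kz/n})^{-1},
\]
and treat the two factors separately. The infinite product is given exactly by Lemma~\ref{lem:primitive-modular}. Its factor $e^{cn/z}\mu_h$ is the leading term $nA(z)$ restricted to $c/z$. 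The quotient $P(\zeta^{-h^{-1}}e^{-\kappa n/z})/P(e^{-5\kappa n/z})$ differs from $1$ by at most about $|e^{-\kappa n/z}|(1+\cdots)\le 2u_*$, summing the geometric majorant since $u_*$ is tiny.

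For the tail, I would group the factors $k=5j-s$ with $j>n$. The logarithm is then $\sum_s\sum_{j>n}\log(1-\zeta^{-hs}e^{-5zj/n}e^{sz/n})$. Apply Euler--Maclaurin in $j$ on $[n,\infty)$, where the summand is $g_s(y)=\log(1-\zeta^{-hs}e^{-5zy}e^{sz/n})$ at $y=j/n$, expanded about the shifted points $(j-s/5)/n$ as before.
\begin{itemize}
 \item The integral term gives $-n\int_1^\infty\log D(\ldots)$ after summing over $s$. Together with $c/z$ this reproduces $nA(z)$; the identity $\int_0^\infty\log D(e^{-5zu})\,du=c/z$ is what makes the dilogarithm terms in \eqref{eq:A} appear.
 \item The boundary term at $y=1$ gives $\lambda_h(z)$ exactly as in the central lemma.
 \item The second-order term gives $b_h(z)/n$ plus the $-z/(6n)$ correction via \eqref{eq:Bernoulli-pair}.
\end{itemize}

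Every derivative of $g_s$ carries a factor $e^{-5z}$ times powers of $5z/n$. Because $\Rea z\ge1/2$, the denominators $|1-\zeta^{-hs}e^{-5z}|$ stay $\ge1-1/12$. So the upper endpoint and remainder terms are bounded by $x(|z|/n+8|z|^2/n^2)$, which is $xQ_*$ with rational constants. Combining this with $|e^u-1|\le2|u|$ for small $u$ gives \eqref{eq:halfline-error}.

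For the folded bounds \eqref{eq:folded-error}, sum $\zeta^{-hr}\lambda_h(z)\epsilon_h$ over $h$ and divide by the comparison amplitude.
\begin{itemize}
 \item For $r\in\ro$, use $|\Lambda_h|<12/11$ and $|\mathcal G_r(X)|\ge172/165$ from Lemma~\ref{lem:disk}. This gives $4\cdot(12/11)(xQ_*+4u_*)/(172/165)\le5xQ_*+20u_*$.
 \item For $r\in\rc$, the amplitude itself is of size $x$, namely $\Gamma_r=-e^{-5z}\mathcal B_r(e^{-5z})$ with $|\mathcal B_r|\ge203/275$. Dividing gives $Q_*$-terms with constant about $4(12/11)(275/203)<6$ and $u_*$-terms of order $u_*/x$.
 \item For $t\ge2$ the sharper bound $x\le e^{-10}$ improves the constants to $5$ and $20$, using the complex-to-real ratio $<7/4$.
 \item The ranges $\alpha\sqrt n$ versus $\alpha_0\sqrt n$ only ensure that $u_*/x$ and $Q_*$ remain $<1/10$, so that the linearization is valid; I would check this from $\Rea(1/z)\ge t/|z|^2\ge1/(5t)$ for $|v|\le2t$.
\end{itemize}

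The main obstacle is the $\rc$ case. The $\epsilon_h$ are individually of relative size $xQ_*$, while the sum of amplitudes is itself $O(x)$. A naive bound then gives $Q_*$ rather than $xQ_*$, which is acceptable. But the $4u_*$ term from the modular quotient does not carry a factor $x$, which forces the $u_*/x$ term in the table. I would need to verify that the tail expansion error genuinely factors through $e^{-5z}$, so that the $x$ can cancel against the amplitude. That means the $j>n$ contributions must be written so that every correction is analytic in $X=e^{-5z}$ and vanishes at $X=0$.
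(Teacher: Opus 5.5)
This is essentially the paper's proof. You use Lemma~\ref{lem:primitive-modular} with the $2u_*$ bound on the transformed product quotient, shifted Euler--Maclaurin summation of the tail on $[1,\infty)$, linearization via $|e^u-1|\le2|u|$, and folding with the constants of Lemma~\ref{lem:disk}. Keeping the factor $|e^{-5zu}|\le x$ in every tail error term already settles the cancellation you call the main obstacle; no analyticity in $X$ is needed.

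Some bookkeeping needs repair:
\begin{itemize}
\item The factorization is $S_n=S_\infty T'_{n,h}$, not $S_\infty/T'_{n,h}$.
\item On $[1,\infty)$ there is no endpoint at $u=0$, so the second-order term yields only $b_h(z)/n$. The $-z/(6n)$ is the factor $e^{-z/(6n)}$ already in Lemma~\ref{lem:primitive-modular}, not a consequence of \eqref{eq:Bernoulli-pair}.
\item For $t\ge2$ the constants $5$ and $20$ come from $|\lambda_h|<100/99$ and $|\mathcal B_r|>99/100$. Inserting a factor $7/4$ would push them above $7$ and $28$.
\item The linearization requires $u_*$, not $u_*/x$, to be small.
\end{itemize}
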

\begin{proof}
Apply shifted summation on $[1,\infty)$ to the tail quotient in
\eqref{eq:tail-layers}. The upper endpoint vanishes and the first two lower
endpoint terms give $\sum_sc_sf_s(1)$ and \eqref{eq:bh}.
The remaining error satisfies
\begin{align}
 |b_h|&\le\frac{2|z|x}{5(1-x)},\label{eq:tail-b}\\
 |R_{\mathrm{tail}}|&\le\frac{5x}{6n^2}
  \left[\frac{|z|^2}{(1-x)^2}+
        \frac{|z|^3(1+x)}{t(1-x)^3}\right]
       <\frac{4|z|^2x}{n^2}.\label{eq:tail-R}
\end{align}
The last comparison uses $|z|/t\le\sqrt5<9/4$ and $x\le1/12$.
In \eqref{eq:primitive-modular}, the transformed product ratio has
logarithmic error $\le2u_*$ if $u_*\le1/100$.
The tail integral changes $cn/z$ into $nA(z)$.
Thus the total logarithmic error is $\le|b_h|/n+4|z|^2x/n^2+2u_*$,
which is $<1/10$ on the stated domain.
Twice this bound is at most $xQ_*+4u_*$.
For the domain check, $t/n< (13/50)/\sqrt n<1/100$ and
$|z|/n<(9/4)(13/50)/41$. Also $u_*<e^{-40}$ on the largest ordinary arc.
These bounds suffice for the logarithmic error condition, even though
$|z|/n$ need not itself be $\le1/100$.

For $t\in[1/2,2]$, Lemma~\ref{lem:disk} bounds the folded error in a
cancelling class by
\[
 \frac{4(12/11)}{203/275}(Q_*+4u_*/x)
      =\frac{1200}{203}Q_*+\frac{4800}{203}u_*/x
      <6Q_*+24u_*/x.
\]
The ordinary classes have real and complex amplitude modulus $>1$, giving
$5xQ_*+20u_*$.
For $t\ge2$, $x<1/20000$, $|\lambda_h|<100/99$,
$|\mathcal B_r(X)|\ge1-4x>99/100$, and the ratio of complex to real
amplitudes is $<101/100$. These give the last two lines of
\eqref{eq:folded-error}.
Notice that division by $x$ is made only after the $x$ in the single-peak
Euler--Maclaurin error has been retained.
\end{proof}

On the boundary arcs, Lemma~\ref{lem:disk} also yields the useful bounds
\begin{equation}\label{eq:boundary-amplitude}
 \begin{split}
 \left|e^{\ii v/(6n)}\frac{\mathcal B_r(e^{-5(t-\ii v)})}{\mathcal B_r(x)}-1\right|
    &\le21x|v|+\frac{|v|}{6n}\quad(r\in\rc),\\
 \left|e^{\ii v/(6n)}\frac{\Gamma_r(t-\ii v)}{\Gamma_r(t)}-1\right|
    &\le20x|v|+\frac{|v|}{6n}\quad(r\in\ro).
 \end{split}
\end{equation}
These follow by first estimating the amplitude difference, and then writing
$e^{\ii v/(6n)}H-1=e^{\ii v/(6n)}(H-1)+(e^{\ii v/(6n)}-1)$.

\section{Uniform phase integrals}\label{sec:phase}
The exponent used after the appropriate saddle point has been chosen is
\begin{equation}\label{eq:F}
 F(v)=n[A(t-\ii v)-A(t)+\ii A'(t)v],\quad
 U=\sqrt{nV(t)}\,v,\quad k=cn/t\quad(t>0).
\end{equation}
All normalized integrals below use the factor $\sqrt{nV/(2\pi)}$.
The symmetry $F(-v)=\overline{F(v)}$ will be used before taking absolute values.

\begin{lemma}\label{lem:central-phase}
For $0\le t\le1/2$, on $|v|\le1/20$, write
$F=-U^2/2+R+\ii\Psi$, with real $R,\Psi$. Then
\begin{equation}\label{eq:central-phase-bounds}
 |R|\le\frac{175U^4}{4nV},\qquad
 |\Psi|\le\frac{35|U|^3}{6\sqrt{nV}},\qquad |R|\le\frac7{64}U^2.
\end{equation}
The inner phase integral differs from the full normalized Gaussian integral
by at most $1000/(nV)$ plus its Gaussian truncation tail.
The additional error from $H_r$ in \eqref{eq:H-center} is at most $1730/(nV)$.
\end{lemma}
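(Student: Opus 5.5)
The plan is a Taylor expansion of $A(t-\ii v)$ about $v=0$ with integral remainder, using Lemma~\ref{lem:complex-cumulants} for the complex derivatives. With $z=t-\ii v$,
\[
 A(t-\ii v)-A(t)+\ii A'(t)v=-\tfrac12V(t)v^2+\tfrac{\ii}{6}A'''(t)v^3+\rho(v).
\]
The remainder satisfies $|\rho(v)|\le \tfrac{v^4}{24}\sup|A''''|\le\tfrac{1050}{24}V(t)v^4$. Since $A$ is real on the real axis, $A'''(t)$ is real. So the cubic term is purely imaginary, and the real part $R$ comes only from $\rho$. Multiplying by $n$ and substituting $v=U/\sqrt{nV}$ gives $|R|\le\tfrac{1050}{24}\,nV v^4=\tfrac{175}{4}\,U^4/(nV)$, which is the first bound.

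For $\Psi$, I would not split off the cubic term. Instead I would use the third-order remainder directly: $|\Ima F|\le n\tfrac{|v|^3}{6}\sup|A'''|\le n\tfrac{35}{6}V|v|^3=\tfrac{35|U|^3}{6\sqrt{nV}}$. The first-order and second-order terms contribute nothing imaginary, since the second-order term is real.

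For the third inequality, $|R|\le\tfrac7{64}U^2$, rewrite the first bound as $\tfrac{175}{4}V v^2\cdot U^2$. It then suffices to have $\tfrac{175}{4}V v^2\le\tfrac7{64}$. On $|v|\le1/20$ with $V<17$ from \eqref{eq:center-V} this fails as stated. I would therefore sharpen it by bounding $\Rea$ of the fourth-order remainder more carefully: use $\Rea F\le-\tfrac12U^2+|\rho|$ with the sharper cumulant constant $42V$, which gives $\tfrac{42}{24}\cdot17\cdot\tfrac1{400}<\tfrac7{64}$. That constant is exactly what the proof of Lemma~\ref{lem:complex-cumulants} produces after dividing by $\sigma^2$, before the loose factor $25$ in $V$ is absorbed. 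Checking this constant bookkeeping is the main obstacle.

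For the integral statements, pair $v$ with $-v$ using $F(-v)=\overline{F(v)}$. This gives
\[
 \int e^{F}=\int e^{-U^2/2}e^{R}\cos\Psi .
\]
Write $e^{R}\cos\Psi-1=(e^{R}-1)\cos\Psi+(\cos\Psi-1)$ and bound the two pieces by $|R|e^{|R|}$ and $\Psi^2/2$. Both are $O(U^4/(nV))$ or $O(U^6/(nV))$, with no first-order odd term; this is exactly what the pairing removes. The factor $e^{|R|}\le e^{7U^2/64}$ is absorbed into $e^{-U^2/2}$, leaving the weight $e^{-25U^2/64}$. Integrating the moments against this weight with the normalization $\sqrt{nV/(2\pi)}$ should give the constant $1000$. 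Extending the integration to the full line accounts for the Gaussian truncation tail.

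For the $H_r$ error, insert \eqref{eq:H-center} and pair again:
\[
 H_r(v)e^{F(v)}+H_r(-v)e^{F(-v)} .
\]
The term $\ii\ell v$ pairs with $\ii\Psi$ to give contributions of size $|\ell||v||\Psi|$. The remaining contributions are $220v^2$ times the Gaussian weight and $|\ell v|\,|e^{F}-e^{-U^2/2}|$. Each is $O(1/(nV))$ after rescaling, using $v^2=U^2/(nV)$, $|\ell|<40$, and the $\Psi$ bound. Summing these gives the constant $1730$.
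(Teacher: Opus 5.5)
Your overall route is the paper's: Taylor expansion with Lemma~\ref{lem:complex-cumulants}, reality of $A'''(t)$, pairing $v$ with $-v$, and Gaussian moments against $e^{-25U^2/64}$. Your derivations of the first two bounds are correct.

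\textbf{The third bound.} Your argument for $|R|\le\frac7{64}U^2$ is wrong, and this matters: that inequality is what produces the envelope $e^{-25U^2/64}$ on which the constants $1000$ and $1730$ rest.

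The failure you claim comes from an algebra slip. Since $U^2=nVv^2$, one has $U^4/(nV)=v^2U^2$, with no factor $V$. Your first bound therefore already gives
\[
 |R|\le\frac{175}{4}v^2U^2\le\frac{175}{1600}U^2=\frac7{64}U^2
\]
for $|v|\le1/20$. This one-line step is exactly the paper's argument.

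Your proposed repair is also invalid. Lemma~\ref{lem:complex-cumulants} bounds the tilted fourth cumulant pointwise by $42\sigma^2$. But $A''''$ integrates that cumulant against $(5u)^4$, while $V=25\int_0^1u^2K_2(e^{-5tu})\dd u$. So $1050=42\cdot25$ is structural (the ratio $5^4/5^2$), not a loose factor. In fact $|A''''|\le42V$ is false. At $t=0$ the tilted uniform law on $\{0,\dots,4\}$ has fourth cumulant $-\frac{26}{5}-31\phi^2-\cdots$, with all coefficients negative. Integrating against $625u^4$ gives $|A''''(-\ii/20)|>650+172>700=42V(0)$.

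\textbf{The $H_r$ error.} After pairing, the linear term becomes exactly $-\ell v\,e^{-U^2/2+R}\sin\Psi$. With $|\ell|<40$ and your $\Psi$ bound, it contributes at most $\frac{1400}{6}\cdot3(32/25)^{5/2}<1400$ in units of $1/(nV)$. The term $R_H$ must be integrated against $|e^F|\le e^{-25U^2/64}$, not the bare Gaussian. It then contributes at most $220(32/25)^{3/2}<330$. These two pieces give the paper's $1730$.

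The further term $|\ell v|\,|e^F-e^{-U^2/2}|$ in your list counts the same linear contribution a second time. Its $\Psi$-part alone would add roughly another $1400/(nV)$, so the sum would no longer come to $1730$. Drop that term.
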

\begin{proof}
Taylor's formula and Lemma~\ref{lem:complex-cumulants} prove
\eqref{eq:central-phase-bounds}. The exponential envelope is
$e^{-25U^2/64}$.
After pairing $v$ and $-v$,
\[
 |e^R\cos\Psi-1|\le(|R|+\Psi^2/2)e^{\max(R,0)}.
\]
The fourth and sixth Gaussian moments give the constant
\[
 \frac{175}{4}\,3\left(\frac{32}{25}\right)^{5/2}
 +\frac{1225}{72}\,15\left(\frac{32}{25}\right)^{7/2}<1000.
\]
For the amplitude, $H_r=1+\ii\ell v+R_H$ and its first odd term contributes
only $-\ell v\sin\Psi$ to the real paired integral.
The fourth and second moments give
\[
 \frac{40(35/6)\,3\cdot2+220(3/2)}{nV}=\frac{1730}{nV}.
\]
The bounds $(32/25)^{5/2}<2$, $(32/25)^{7/2}<5/2$ and
$(32/25)^{3/2}<3/2$ are rational squared comparisons.
\end{proof}

For later use, on the whole central arc,
\begin{equation}\label{eq:central-loss}
 \Rea F(v)\le-\frac16nVv^2\quad(|v|\le1/5),\qquad
 \Rea F(v)\le-\frac38nVv^2\quad(|v|\le1/10).
\end{equation}
To see this, set $w_m(X)=\sum_jp_jp_{j+m}$.
The elementary characteristic function bound is
\begin{equation}\label{eq:weight-loss}
 \log D(X)-\log|D(Xe^{\ii\phi})|
     \ge\sum_{m=1}^4w_m(X)(1-\cos m\phi).
\end{equation}
The $m=4$ portion of the variance is at most $16/50<1/3$, since
$P_2(X)\ge50X^3$ in \eqref{eq:K2}.
The function $(1-\cos s)/s^2$ decreases on $[0,2\pi]$.
For $|v|\le1/5$, use $1-\cos3>49/25$ and $1-\cos4>8/5$;
for $|v|\le1/10$, use $1-\cos(3/2)>37/40$ and $1-\cos2>7/5$.
The resulting coefficients are bounded below by
$(2/3)(49/225)+(1/3)(1/10)>1/6$ and
$(2/3)(37/90)+(1/3)(7/20)>3/8$.
This proves \eqref{eq:central-loss}. In particular, the whole unweighted
normalized absolute phase integral is $<\sqrt3<2$.

\begin{lemma}\label{lem:middle-phase}
For $1/2\le t\le2$ and $n\ge N$, the inner arc $|v|\le t/4$ has phase
error less than $85/k$ plus the Gaussian truncation tail.
Its normalized absolute integral is $<29/25$, and its normalized absolute
first moment is $<16/(15\sqrt{nV})$.
For either amplitude $H$ in Lemma~\ref{lem:disk}, the weighted outer arc
$t/4\le|v|\le2t$ contributes less than
\begin{equation}\label{eq:middle-outer}
 \frac{28e^{-2k/85}}{\sqrt k}+4\sqrt k\,e^{-k/20}.
\end{equation}
The unweighted normalized absolute integral on $|v|\le2t$ is $<6/5$.
\end{lemma}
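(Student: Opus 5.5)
The plan is to follow the template of Lemma~\ref{lem:central-phase}, replacing the cumulant bounds of Lemma~\ref{lem:complex-cumulants} by the uniform derivative bounds \eqref{eq:middle-derivatives} of Lemma~\ref{lem:derivatives}. On $1/2\le t\le2$ and $|v|\le t/4$, Taylor's formula at $v=0$ gives $F=-U^2/2+R+\ii\Psi$ with
\[
 |\Psi|\le\frac{11c\,n|v|^3}{6t^4},\qquad |R|\le\frac{46c\,n v^4}{24t^5}.
\]
Using $V\ge c/t^3$ from \eqref{eq:outer-V}, these become
\[
 |\Psi|\le C_3|U|^3/\sqrt k,\qquad |R|\le C_4U^4/k,
\]
since $nVv^2\ge kv^2/t^2$. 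On $|v|\le t/4$ the same bounds give $|R|\le\theta U^2$ with $\theta<1/2$, and hence an envelope $e^{-(1/2-\theta)U^2}$. I will pair $v$ with $-v$ using $F(-v)=\overline{F(v)}$, so that only $e^R\cos\Psi-1$ enters. Then $|e^R\cos\Psi-1|\le(|R|+\Psi^2/2)e^{\max(R,0)}$, and the fourth and sixth moments of this Gaussian envelope give an error of the form $c'/k$. The plan is to verify $c'<85$ by rational comparison, and to account for the Gaussian truncation at $|U|=\sqrt{nV}\,t/4$ separately, as in the statement. The absolute integral $<29/25$ and the first moment $<16/(15\sqrt{nV})$ will follow from the same envelope, namely from $\int e^{-(1/2-\theta)U^2}$ and $\int|U|e^{-(1/2-\theta)U^2}$ normalized by $\sqrt{1/(2\pi)}$, after rescaling $v=U/\sqrt{nV}$.

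For the outer arc $t/4\le|v|\le2t$, no Taylor expansion is available, and I would work instead with $\Rea A(t-\ii v)-\Rea A(t)$ written via $A=c/z+R_A$. The main term satisfies
\[
 \Rea(c/z)-c/t=-\frac{c}{t}\cdot\frac{v^2}{t^2+v^2}.
\]
Together with $\Rea(\ii A'(t)v)=0$, this gives $\Rea F\le-k\,v^2/(t^2+v^2)+2n\sup|R_A|$. For $|R_A|$ I would use Lemma~\ref{lem:derivatives}: $2|R_A|<3c/(20t)$ bounds the oscillation by $(3/20)k$. That is too crude at the inner edge, where $v^2/(t^2+v^2)=1/17$. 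I would therefore refine it by bounding $R_A(t-\ii v)-R_A(t)$ through $|R_A'|\cdot|v|$, or use the characteristic-function inequality \eqref{eq:weight-loss} integrated over $u$, to obtain a loss of at least $2k/85$ near $|v|=t/4$ and $k/20$ farther out.

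The amplitude $H$ from \eqref{eq:H-outer}, or the boundary form in Lemma~\ref{lem:disk}, contributes at most a bounded factor, about $1+229|v|\le$ const. Integrating the normalized density $\sqrt{nV/2\pi}\,e^{-\mathrm{loss}}$ should produce the two terms of \eqref{eq:middle-outer}. The first comes from a Gaussian-type tail near $|v|=t/4$, giving $e^{-2k/85}/\sqrt k$ after integrating $e^{-kv^2/(\ldots)}$. The second comes from a flat bound $e^{-k/20}$ times the arc length times $\sqrt{nV}\lesssim\sqrt k$. The unweighted bound $<6/5$ on $|v|\le2t$ then combines $29/25$ with the outer unweighted contribution, which is negligible for $n\ge N$ because $k\ge cN/2$ is large.

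The main obstacle is the outer-arc lower bound on $-\Rea F$ with the explicit constants $2/85$ and $1/20$. The $R_A$ correction must be controlled uniformly in $v$ up to $2t$, where $|z|$ is comparable to $t$, and the crude $3c/(20t)$ bound does not suffice near the inner edge. I would first try the monotonicity argument of \eqref{eq:weight-loss}, integrated in $u$ with $\phi=5vu$, which yields a lower bound directly and explicitly in terms of $K_2$.
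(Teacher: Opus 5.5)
\textbf{Inner arc.} Your treatment matches the paper: Taylor expansion with \eqref{eq:middle-derivatives}, then $V\ge c/t^3$, pairing $v$ with $-v$, and fourth and sixth Gaussian moments. One caution: you must take $\theta$ directly as $|R|/U^2\le 46c\,v^2/(24t^5V)\le46/384<1/8$. Saying only $\theta<1/2$ is not enough. Going through $23U^4/(12k)$ with $U^2\le3k/16$ only gives $\theta\approx0.36$. The normalized absolute integral is $(1-2\theta)^{-1/2}$, which is below $29/25$ only for $\theta<0.128$, and the constant $85$ likewise needs the envelope $e^{-3U^2/8}$.

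\textbf{Outer arc: the genuine gap.} The problem is on $t/4\le|v|\le t/2$, i.e.\ $1/4\le|y|\le1/2$ with $y=v/t$. Your first repair, bounding $R_A(t-\ii v)-R_A(t)$ by $|v|\sup|R_A'|$, fails. It gives a correction linear in $|y|$, whereas the main term $ky^2/(1+y^2)$ is quadratic. Concretely, at $t=1/2$ one has $R_A'(1/2)=c/t^2-\beta(1/2)\approx0.238$. So at $|y|=1/4$ the correction is about $0.030n$, against a main term $k/17\approx0.031n$, and nothing like the required $2k/85\approx0.0124n$ survives. Your second option, integrating \eqref{eq:weight-loss} in $u$, is left unspecified. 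It would need a two-parameter certification in $(t,y)$, with angles $5mvu$ exceeding $2\pi$.

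The missing observation is that $R_A'(t)$ is real, so $\Rea(\ii vR_A'(t))=0$ and
\[
 \Rea[R_A(t-\ii v)-R_A(t)]=\Rea[R_A(t-\ii v)-R_A(t)+\ii vR_A'(t)]
 \le\tfrac12v^2\sup|R_A''|\le\tfrac{12}{25}\,\frac{c\,y^2}{t}.
\]
This uses $t^3|R_A''|<(24/25)c$ on the whole line $\Rea z=t$, already proved in Lemma~\ref{lem:derivatives}. Hence, for $|y|\le1/2$,
\[
 \Rea F\le-\frac{ky^2}{1+y^2}+\tfrac{12}{25}ky^2\le-\tfrac25\,\frac{ky^2}{1+y^2}.
\]
On $[1/4,1/2]$ we have $y^2/(1+y^2)\ge\tfrac4{17}|y|$, which gives the linear envelope $e^{-(8/85)k|y|}$. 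This is not a Gaussian tail, and it is what produces exactly $2/85$. Integrating from $|y|=1/4$ gives $e^{-2k/85}\cdot85/(8k)$. The normalization $\sqrt{nV}\,t\le\sqrt{3k}$ then turns this into the $e^{-2k/85}/\sqrt k$ term.

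\textbf{Remaining outer arc and the amplitude.} For $|y|\ge1/2$ no refinement is needed. The crude bound you already wrote suffices, since $y^2/(1+y^2)\ge1/5$ and $1/5-3/20=1/20$.

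The weight must be $|H|<7/4$, the complex-to-real amplitude ratio from Lemma~\ref{lem:disk}, which applies because $x\le e^{-5/2}<1/12$. The bound $1+229|v|$ of \eqref{eq:H-outer} belongs to the central range. Here $|v|$ reaches $4$, where it is about $900$ and would destroy the constants. With $7/4$, the two terms of \eqref{eq:middle-outer} come out with constants about $25.7<28$ and $3.63<4$.
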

\begin{proof}
Lemma~\ref{lem:derivatives} gives on the inner arc
\[
 |R|\le\frac{23U^4}{12k},\quad
 |\Psi|\le\frac{11|U|^3}{6\sqrt k},\quad
 |R|/U^2\le\frac{46}{384}<\frac18.
\]
The last bound is obtained directly from $|A''''|v^2/(24V)$, not by
combining the previous bounds with an upper variance estimate.
Thus the envelope is $e^{-3U^2/8}$ and the moment constant is
\[
 \frac{23}{12}\,3(4/3)^{5/2}
       +\frac{121}{72}\,15(4/3)^{7/2}<85.
\]
The absolute integral is at most $\sqrt{4/3}<29/25$, and the absolute
first moment is at most
\[
 \frac{8}{3\sqrt{2\pi}\sqrt{nV}}<\frac{16}{15\sqrt{nV}},
\]
using $\sqrt{2\pi}>5/2$.

For $|y|=|v|/t\le1/2$, the second-derivative bound
$t^3|R_A''|<(24/25)c$ gives
\[
 \Rea F\le-\frac25 k\frac{y^2}{1+y^2}.
\]
For $1/2\le|y|\le2$, the last assertion of Lemma~\ref{lem:derivatives}
gives $\Rea F\le-k/20$.
On $1/4\le|y|\le1/2$, $y^2/(1+y^2)\ge(4/17)|y|$.
Integrating the resulting positive exponential envelope, and using
$nVt^2\le3k$ and $|H|<7/4$, gives the first term of
\eqref{eq:middle-outer}; bounding the remaining length gives the second.
The envelope is extended to infinity only as an upper bound for a positive
integral. The phase inequality is used solely on its stated domain.
At $k>455/2$ the right side of \eqref{eq:middle-outer} is $<.012$.
Consequently the unweighted absolute integral is $<29/25+.012<6/5$.
\end{proof}

\begin{lemma}\label{lem:boundary-phase}
For $t\ge2$, let $k=cn/t>0$. On $|v|\le2t$,
\begin{equation}\label{eq:boundary-loss}
 \Rea F(v)\le-\frac{49}{50}k\frac{(v/t)^2}{1+(v/t)^2}.
\end{equation}
The normalized unweighted phase integral differs from $1$ by at most
\begin{equation}\label{eq:Gnew}
 G_*(k)=\frac9{4k}+\frac{5e^{-49k/850}+(23/10)e^{-199k/3200}}{\sqrt k}.
\end{equation}
Its normalized absolute integral is at most
$\sqrt{15/14}+5e^{-49k/850}/\sqrt k$.
In particular, for $k\ge42$ these absolute integrals are $<6/5$,
$G_*(42)<.16$, and $G_*(66)<.06$.
\end{lemma}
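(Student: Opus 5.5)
The plan is to treat $F$ via the decomposition $A(z)=c/z+R_A(z)$. Write $y=v/t$ and $z=t(1-\ii y)$. The principal part of $F$ is then
\[
 F_0(v)=k\left[\frac1{1-\ii y}-1-\ii y\right]
 =k\,\frac{-y^2}{1-\ii y},
\]
and its real part is $-ky^2/(1+y^2)$. The remainder
$F_1=n[R_A(t-\ii v)-R_A(t)+\ii R_A'(t)v]$ will be bounded by Taylor's formula with the second-derivative estimate for $R_A$ on $\Rea z=t\ge2$. The proof of Lemma~\ref{lem:derivatives} gives $t^3|R_A''|\le c/100$ there, so
\[
 |F_1|\le\frac{c}{100}\,\frac{n v^2}{2t^3}=\frac{k}{200}\,y^2 .
\]
For $|y|\le2$ we have $y^2\le5y^2/(1+y^2)$, so $|F_1|\le\frac{k}{40}\,y^2/(1+y^2)$. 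This is a loss of $1/40<1/50$ relative to the main term, and it proves \eqref{eq:boundary-loss}. If needed, the sharper $e^{-10}$-size bounds can be used to make the constant comfortable.

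Next comes the Gaussian comparison. I would split the arc at $|y|=y_0$, where $y_0$ is chosen so that the constant $49/850$ appears. On $|y|\le y_0$, say $y_0$ near $1/4$, expand
\[
 F=-\tfrac12 nVv^2+R+\ii\Psi
\]
exactly as in Lemma~\ref{lem:middle-phase}. Use \eqref{eq:boundary-derivatives} for the cubic and quartic terms, \eqref{eq:boundary-V} for $V$, and $nVt^2\approx2k$ to convert to $U$. This gives $|R|\lesssim U^4/k$, $|\Psi|\lesssim|U|^3/\sqrt k$ and $|R|\le\epsilon U^2$ with a small $\epsilon$. Pair $v$ with $-v$, using $F(-v)=\overline{F(v)}$, so that only $e^R\cos\Psi-1$ survives. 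The fourth and sixth Gaussian moments, with envelope $e^{-(1/2-\epsilon)U^2}$, then give the term $9/(4k)$. I expect the constant to be computed from $(61c/10)^2/\ldots$ and $123c/5$ together with $V\ge199c/(100t^3)$.

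The Gaussian truncation tail beyond $|y|=y_0$ contributes $(23/10)e^{-199k/3200}/\sqrt k$. In $U$-units the cutoff is $U^2=nVt^2y_0^2\ge(199/100)k\,y_0^2$, and a Mills-ratio bound converts this into that exponential; $y_0=1/4$ gives $199k/3200$ after halving. On the outer arc $y_0\le|y|\le2$, I would integrate the envelope from \eqref{eq:boundary-loss}, using $y^2/(1+y^2)\ge c'|y|$ there, and multiply by the normalization $\sqrt{nV/(2\pi)}\,t\le\sqrt{k/\pi}$. The result is $5e^{-49k/850}/\sqrt k$, where $49/850=(49/50)(1/17)$ comes from $y_0^2/(1+y_0^2)=1/17$ at $y_0=1/4$.

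For the absolute integral, integrate $e^{\Rea F}$ on the inner arc against the envelope $e^{-(1/2-\epsilon)U^2}$. This gives $(1-2\epsilon)^{-1/2}$, and taking $\epsilon=1/30$ yields $\sqrt{15/14}$; the same outer term is then added. The numerical consequences for $k=42,66$ are rational evaluations. The main obstacle is the bookkeeping of constants: the inner bound $|R|/U^2\le1/30$ has to come directly from $|A''''|v^2/(24V)$ with $|y|\le1/4$, and the moment constant has to come out at or below $9/4$. If it does not, I would shrink $y_0$ slightly and rebalance it against the tail exponents.
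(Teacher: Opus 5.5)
Your plan follows the paper's proof almost step for step. The shared steps are:
\begin{itemize}
\item the split at $|y|=1/4$;
\item the pairing $v\leftrightarrow-v$, which leaves $e^R\cos\Psi-1$;
\item the bound $|R|/U^2<1/30$ taken directly from $|A''''|v^2/(24V)$, with envelope $e^{-7U^2/15}$ and absolute integral $\sqrt{15/14}$;
\item the truncation point $U_0^2\ge199k/1600$;
\item the outer envelope from $y^2/(1+y^2)\ge(4/17)|y|$ on $1/4\le|y|\le2$.
\end{itemize}

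\textbf{The first step fails as written.} The outer estimate depends on it. From $t^3|R_A''|\le c/100$ you correctly get $|F_1|\le\frac{k}{200}y^2\le\frac{k}{40}\frac{y^2}{1+y^2}$. You then assert $1/40<1/50$, but in fact $1/40>1/50$. So this bound gives only $\Rea F\le-\frac{39}{40}k\,y^2/(1+y^2)$. That does not prove \eqref{eq:boundary-loss}, and it would change both the exponent $49k/850$ and the constant $5$ in \eqref{eq:Gnew}.

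The fallback you call optional is therefore necessary, and it is exactly the paper's argument. The paper notes it uses this stronger comparison rather than the coarser $c/100$ bound behind \eqref{eq:boundary-V}. Use the unrounded remainder $t^3|R_A''|\le7t^2x/(1-x)$ with $x=e^{-5t}$. Observe that $t^2x/(1-x)$ decreases for $t\ge2$, and evaluate at $t=2$ with $x<1/20000$:
\[
 |F_1|\le\frac{7t^2x}{2(1-x)}\,\frac kc\,y^2
 \le\frac72\cdot\frac{5\cdot2^2\,(1/20000)}{1-1/20000}\cdot\frac kc\cdot\frac{y^2}{1+y^2}
 <\frac k{50}\,\frac{y^2}{1+y^2}.
\]

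\textbf{A smaller slip.} \eqref{eq:boundary-V} gives $nVt^2\le\frac{201}{100}k$, not $nVt^2\le2k$. So the normalizing factor on the outer arc is $\sqrt{201k/(200\pi)}$ rather than $\sqrt{k/\pi}$. The outer constant is then $\frac{850}{98}\sqrt{201/(200\pi)}<5$, so \eqref{eq:Gnew} is unaffected.

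With these two corrections, plus the rational check that the moment constant
\[
3\cdot\frac{41}{40(199/100)^2}\left(\frac{15}{14}\right)^{5/2}+15\cdot\frac{(61/60)^2}{2(199/100)^3}\left(\frac{15}{14}\right)^{7/2}
\]
is below $9/4$, your proof coincides with the paper's.
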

\begin{proof}
The remainder estimate
$t^3|R_A''|\le7t^2x/(1-x)$, $x=e^{-5t}$, and the rational inequality
\[
 \frac{7}{2}\frac{2^2(1/20000)\cdot5}{1-1/20000}<\frac c{50}
\]
prove \eqref{eq:boundary-loss}; $t^2x/(1-x)$ decreases for $t\ge2$.
This stronger comparison is used here rather than the coarser variance
interval alone.
On the inner arc $|v|\le t/4$, Lemma~\ref{lem:derivatives} gives
\[
 |R|\le\frac{41}{40(199/100)^2}\frac{U^4}{k},\qquad
 \frac{\Psi^2}{2}\le\frac{(61/60)^2}{2(199/100)^3}\frac{U^6}{k},\qquad
 |R|/U^2<\frac1{30}.
\]
The envelope is $e^{-7U^2/15}$.
The fourth and sixth moments give a total constant $<9/4$:
\[
 3\frac{41}{40(199/100)^2}(15/14)^{5/2}
 +15\frac{(61/60)^2}{2(199/100)^3}(15/14)^{7/2}<\frac94.
\]
On $1/4\le|y|\le2$, \eqref{eq:boundary-loss} and
$y^2/(1+y^2)\ge(4/17)|y|$ imply the positive envelope
$e^{-(98/425)k|y|}$.
Since $nVt^2\le201k/100$, its normalized integral is
$<5e^{-49k/850}/\sqrt k$.
The Gaussian truncation point satisfies $U_0^2\ge199k/1600$, giving the
double tail $<(23/10)e^{-199k/3200}/\sqrt k$.
These prove \eqref{eq:Gnew}.
All terms in $G_*$ decrease with $k$; the endpoint comparisons are rational
exponential certificates.
\end{proof}

\section{Bounds on the complementary arcs}\label{sec:minor}
We require estimates on the whole complementary contour, including points
near roots of unity whose order may depend on $n$.

\subsection{Moving rational cusps}
\begin{lemma}\label{lem:global-minor}
Let $0<\delta\le1/100$. If $\theta$ has distance at least $2\delta$ from
the four primitive fifth roots, then
\begin{equation}\label{eq:infinite-minor}
 |S_\infty(e^{-\delta+\ii\theta})|
       \le2e^{c/(4\delta)-\delta/6}.
\end{equation}
Consequently, for $n\ge N$, $\delta=t/n$, and the same distance condition,
\begin{align}
 |S_n(e^{-t/n+\ii\theta})|&\le256e^{nA(t)-(c/3)n/t}\quad(t\ge1/2),\label{eq:middle-minor}\\
 |S_n(e^{-t/n+\ii\theta})|&\le2e^{nA(t)-(37c/50)n/t}\quad(t\ge2).\label{eq:boundary-minor}
\end{align}
The displayed bounds are used only when $t/n\le1/100$.
\end{lemma}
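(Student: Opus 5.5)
The bound \eqref{eq:infinite-minor} comes from modular transformation at the nearest cusp, and the two finite-$n$ bounds then follow by dividing out the tail product.

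\textbf{Bound for $S_\infty$.} Write $q=e^{-\delta+\ii\theta}$ and $S_\infty(q)=q^{1/6}\eta(\tau)/\eta(5\tau)$ with $\tau=(\theta+\ii\delta)/(2\pi)$. By Dirichlet's theorem choose a reduced fraction $h/m$ with $|\theta/(2\pi)-h/m|\le 1/(mM)$ and $m\le M$, where $M\asymp\delta^{-1/2}$. Apply \eqref{eq:eta-transform} with a matrix of lower row $(m,-h)$ to $\eta(\tau)$. Apply a matrix adapted to $5\tau$ to $\eta(5\tau)$: its lower row is $(m,-5h)$ if $5\nmid m$, and $(m/5,-h)$ if $5\mid m$. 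The square-root factors have ratio bounded by $\sqrt5$ up to constants. The leading exponentials give
\[
|S_\infty(q)|\approx \exp\left(\frac{\pi^2}{6}\Rea\frac{1}{w}\left(\frac1{m^2}-\frac{g^2}{5m^2}\right)\right),\qquad w=\delta-\ii(\theta-2\pi h/m),
\]
where $g=\gcd(m,5)$.
\begin{itemize}
\item If $5\nmid m$, the coefficient is $\tfrac{4}{5}\cdot\tfrac{\pi^2}{6m^2}$. Since $\Rea(1/w)\le1/\delta$, this is at most $c/(4\delta)$ once $m\ge2$, because $\tfrac{2\pi^2}{15m^2}\le\tfrac{\pi^2}{30}<\tfrac{c}{4}\cdot\ldots$ is checked rationally. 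For $m=1$ the exponent is \emph{negative}, since $5\tau$ transforms with $m=1$ while $\tau$ sits at the cusp $0$.
\item If $5\mid m$ and $m=5m'$, the coefficient is $\tfrac{\pi^2}{6}\left(\tfrac{1}{25m'^2}-\tfrac{1}{5m'^2}\right)<0$ unless $m'=1$.
\end{itemize}
So the only large contributions are the cusps $h/5$ with $5\nmid h$, which are the primitive fifth roots. Denote this case $(\star)$. There the exponent is $c\,\Rea(1/w)=c\delta/(\delta^2+\phi^2)$ with $\phi=\theta-2\pi h/5$. The hypothesis $|\phi|\ge2\delta$ gives $\Rea(1/w)\le1/(5\delta)<1/(4\delta)$. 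The $-\delta/6$ comes from $|q^{1/6}|$. For the constant $2$, bound the transformed $\eta$-quotients (parameters $|e^{-\kappa\Rea(1/w)}|$ small) by $|P(u)|\le\prod(1+|u|^j)$ and $1/|P|\le\prod(1-|u|^j)^{-1}$ when $\Rea(1/w)$ is large. When $\Rea(1/w)$ is small (the Dirichlet approximation is poor), use $m\le M$ together with the Dirichlet estimate $|\phi|\le 2\pi/(mM)$ so that $\Rea(1/w)\gtrsim \delta^{-1}m^{-2}\cdot\mathrm{const}$. The choice $M=\lfloor(10\delta)^{-1/2}\rfloor$ is what I would try first. \textbf{This bookkeeping is the main obstacle:} the uniformity over cusps whose order $m$ may grow like $\delta^{-1/2}$, and the constant $2$. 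I would separate $m\le 25$, where the coefficients are explicit, from $m>25$, where the crude bound $\tfrac{\pi^2}{6m^2\delta}\le c/(4\delta)$ holds with room to absorb the $\eta$-quotient constants and $\sqrt5$.

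\textbf{Passage to $S_n$.} Use $S_n=S_\infty T_n$ from the proof of Lemma~\ref{lem:edge}. Then $|T_n(q)|\le T_n(|q|)=\prod_{k>5n,\,5\nmid k}(1-e^{-tk/n})^{-1}$. Its logarithm is estimated by comparison with $n\int_1^\infty$, giving
\[
\log T_n(e^{-t/n}) = -n\int_1^\infty\log D(e^{-5tu})\,\dd u + O(\cdot) = n\left(A(t)-\frac{c}{t}\right)+\text{error}.
\]
Here the identity uses $\int_0^\infty\log D(e^{-5tu})\,\dd u = c/t$, which follows from \eqref{eq:A}. The error is controlled by the monotone decrease of the integrand: the sum is at most the integral plus the first term, and the first term is at most $4\cdot$ a quantity $\le\log(1/(1-e^{-5t}))$-type bound. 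For $t\ge1/2$ this contributes at most a factor $128$. Hence
\[
|S_n|\le 2\cdot128\,e^{nA(t)-cn/t+cn/(4t)}.
\]
Since $-\tfrac34\le-\tfrac13$, this gives \eqref{eq:middle-minor} with room to spare. For $t\ge2$ the boundary term is below $\log 1.0001$. The constant is then improved by using the $\delta/6$ and the factor $\tfrac15$ instead of $\tfrac14$: we get $-c+c/5=-\tfrac45 c\le-\tfrac{37}{50}c$, which yields \eqref{eq:boundary-minor} with constant $2$ after absorbing the tail factor into the slack $\tfrac{3}{50}c\,n/t$. This absorption is valid since $cn/t\ge$ large when $t/n\le1/100$.
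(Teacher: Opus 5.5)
Your architecture is the paper's: Dirichlet approximation, eta transformation at the chosen cusp, then $S_n=S_\infty T_n$ with $|T_n(q)|\le T_n(|q|)$ and comparison with $nA(t)$. However, both quantitative steps fail as written.

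\textbf{The bound for $S_\infty$.} The cusp exponent has the wrong sign. Transforming $\eta(\tau)$ and $\eta(5\tau)$ at a reduced cusp $a/m$, with $g=\gcd(m,5)$, gives the exponent $\frac{\pi^2}{6m^2}\bigl(\frac{g^2}{5}-1\bigr)\Rea\frac1w$. At $m=5$ this equals $+c\Rea(1/w)$, as Lemma~\ref{lem:primitive-modular} requires. Your formula gives $-c\Rea(1/w)$ there, which contradicts your own case $(\star)$. The consequences are:
\begin{enumerate}
\item Every cusp with $5\nmid m$, not just $m=1$, has a negative exponent.
\item Every cusp $m=5\ell$ has the positive exponent $(c/\ell^2)\Rea(1/w)$.
\item Your first bullet rests on the false inequality $\pi^2/30<c/4$; in fact $\pi^2/30=5c/4$.
\item Your second bullet discards the cusps $m=10$, and these are exactly what produce $c/(4\delta)$. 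The distance hypothesis is needed only for $\ell=1$, where it gives $c/(5\delta)$.
\end{enumerate}

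The constant $2$ is also left open. Your scale $M\approx(10\delta)^{-1/2}$ is too coarse. It only guarantees $\sigma=4\pi^2\Rea(1/w)/m^2\gtrsim1/10$, which is far too weak to control $1/|P(v)|$ when $\delta$ is near $1/100$. The paper instead takes $Q_0=\lceil\sqrt{2\pi/\delta}\rceil$, which balances $m^2\delta^2$ against $4\pi^2/Q_0^2$. This gives $\Rea(1/w)/m^2\ge1/(5\pi)$ and hence $|u|<1/12$. Then:
\begin{enumerate}
\item For $g=1$, the factor $\sqrt5\,|P(u)|/|P(v)|<256$ is absorbed, because the exponent is negative and $2e^{c/(4\delta)}\ge2e^5>256$.
\item For $g=5$, one has $|v|=|u|^5$, and the product ratio is below $6/5$.
\end{enumerate}

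\textbf{The passage to $S_n$.} Here you confuse $T_n$ at the real point with $T_n$ near a primitive root. The quantity $\log T_n(e^{-t/n})$ is \emph{positive}, about $\frac{4}{5\delta}\Li(e^{-5t})$. It is therefore linear in $n$, and over $200$ at $t=1/2$, $n=N$. By contrast, $-n\int_1^\infty\log D(e^{-5tu})\dd u=n(A(t)-c/t)$ is negative. It gives the size of $\log|T_n|$ near $\zeta^h$, not of $\log T_n(|q|)$. So ``the tail contributes at most a factor $128$'' is false, and the exponent $-cn/t+cn/(4t)$ is not obtained.

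The correct bookkeeping writes $c/(4\delta)=nA(t)+[c/\delta-nA(t)]-3c/(4\delta)$. Two positive costs must then be paid out of the gap $3c/(4\delta)$, where $x=e^{-5t}$:
\begin{enumerate}
\item $\log T_n(e^{-\delta})\le21x/[25\delta(1-x)]$;
\item $c/\delta-nA(t)\le x/[5\delta(1-x)]$.
\end{enumerate}
For $t\ge1/2$ their sum is at most $26/(275\delta)<5c/(12\delta)$, which leaves $c/3$. This comparison is tight, contrary to your ``room to spare''. For $t\ge2$ the sum is below $c/(100\delta)$, which leaves $37c/50$. Your improvement to $-\frac45c$ is unavailable, because the $m=10$ cusps already force $c/4$ rather than $c/5$ in the bound for $S_\infty$.
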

\begin{proof}
For a reduced cusp $a/m$, put $g=\gcd(m,5)$,
$q=e^{2\pi\ii a/m-w}$ and $\Rea w=\delta$.
Applying \eqref{eq:eta-transform} to $\eta(\tau)$ and $\eta(5\tau)$ gives
\begin{equation}\label{eq:general-cusp}
 |S_\infty(q)|=\sqrt{\frac5g}
 \exp\left[\frac{\pi^2}{6m^2}\left(\frac{g^2}{5}-1\right)\Rea\frac1w-\frac\delta6\right]
 \frac{|P(u)|}{|P(v)|},
\end{equation}
where $|u|=e^{-\sigma}$, $|v|=e^{-(g^2/5)\sigma}$ and
$\sigma=4\pi^2\Rea(1/w)/m^2$.
Indeed, the transformed arguments are
$a_*/m+2\pi\ii/(m^2w)$ and
$b_*/(m/g)+2\pi\ii/[5(m/g)^2w]$.
The square root ratio has modulus $\sqrt{5/g}$; the eta multipliers have
modulus one. Their leading exponentials and $|q^{1/6}|=e^{-\delta/6}$
give \eqref{eq:general-cusp}.

Set $Q_0=\lceil\sqrt{2\pi/\delta}\rceil$.
Dirichlet approximation supplies $m\le Q_0$ and
$|y|=|\theta-2\pi a/m|\le2\pi/(mQ_0)$.
With $w=\delta-\ii y$,
\[
 \frac{\Rea(1/w)}{m^2}\ge
 \frac\delta{Q_0^2\delta^2+4\pi^2/Q_0^2}\ge\frac1{5\pi}.
\]
Here $\sqrt{2\pi/\delta}>20$ and
$Q_0\le(21/20)\sqrt{2\pi/\delta}$.
Thus $|u|<1/12$.
For any radii $p,s<1$,
\[
 |P(u)|\le e^{p/(1-p)},\qquad |P(v)|^{-1}\le e^{s/(1-s)^2}.
\]
If $g=1$, then $|v|<8/13$ and the square root and product ratio in
\eqref{eq:general-cusp} are $<256$.
Its exponential coefficient is negative. Since $c/(4\delta)>13/2>5$
and $2e^5>256$, this case is bounded by the right side of
\eqref{eq:infinite-minor}.
If $g=5$, then $|v|<1/250000$ and the product ratio is $<6/5$.
For $m=5\ell$, $\ell\ge2$, the positive exponent is at most
$c/(4\delta)$; for $m=5$ the distance hypothesis gives $|y|\ge2\delta$
and hence an exponent at most $c/(5\delta)$.
This proves \eqref{eq:infinite-minor} uniformly in the moving denominator.

The finite tail quotient is bounded by its positive real value, and
\[
 \log T_n(e^{-\delta})\le\frac{21x}{25\delta(1-x)},\qquad
 c/\delta-nA(t)\le\frac{x}{5\delta(1-x)},\quad x=e^{-5t}.
\]
The first bound uses
$1/(1-e^{-5\delta})\le1+1/(5\delta)$ and $\delta\le1/100$.
Their combined cost is $26x/[25\delta(1-x)]$.
For $t\ge2$, it is $<c/(100\delta)$; for $t\ge1/2$, it is
$\le26/(275\delta)<5c/(12\delta)$.
Subtract these costs from the gap $3c/(4\delta)$.
For the middle estimate we retain the larger harmless prefactor $256$.
\end{proof}

\subsection{A root-one integral}
For $0<\rho<1$, define
\begin{equation}\label{eq:I}
 I_\rho(a,b)=\int_0^1\log|1-\rho e^{-au+\ii bu}|\dd u,
 \qquad C_\rho=\Li(\rho).
\end{equation}
\begin{lemma}\label{lem:root-one}
For $\rho\ge99/100$, $a\ge0$ and $b\in\RR$, $I_\rho(a,b)\le9/40$.
For every $0<\rho<1$ and $a>0$,
\begin{equation}\label{eq:I-square}
 I_\rho(a,b)\le\frac{\Li(\rho e^{-a})^2}{4C_\rho a}.
\end{equation}
For $\rho=999/1000$, uniformly in real $y$,
\begin{equation}\label{eq:root-one-gap}
 A(t)-4I_\rho(5t,5y)>
 \begin{cases}4/15,&0\le t\le1/10,\\21/50,&1/10\le t\le1/2.\end{cases}
\end{equation}
\end{lemma}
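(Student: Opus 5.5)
We prove the three assertions separately, all from the Fourier expansion of $\log|1-w|$. For $|w|<1$,
\[
 \log|1-w|=-\Rea\sum_{\ell\ge1}\frac{w^\ell}{\ell},
\]
so
\[
 I_\rho(a,b)=-\Rea\sum_{\ell\ge1}\frac{\rho^\ell}{\ell}\int_0^1e^{-\ell(a-\ii b)u}\dd u .
\]

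\emph{The bound $9/40$.} We argue geometrically. The subharmonic function $\log|1-w|$ is at most $\log(1+\rho e^{-au})\le\log 2$ pointwise, which is too weak. Instead we use that $u\mapsto\rho e^{-au+\ii bu}$ traces a spiral. Write $w=re^{\ii\phi}$ with $r=\rho e^{-au}$ and $\phi=bu$, so that $\log|1-w|=\tfrac12\log(1-2r\cos\phi+r^2)$. For fixed $r$ this is largest at $\phi=\pi$. The point is that the phase $\phi$ cannot sit near $\pi$ for all $u$. We split according to $|b|$.
\begin{itemize}
 \item For $|b|\le\pi/2$, the integrand with $u\le1$ stays in the right half-plane region. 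The worst case is then $a=0$, and we compute $\int_0^1\tfrac12\log(1-2\rho\cos bu+\rho^2)\dd u$ explicitly for $\rho$ near $1$.
 \item For larger $|b|$, the angle sweeps at least an interval of length comparable to $|b|$. On each full turn the average of $\log|1-re^{\ii\phi}|$ over $\phi$ is $0$, since $r<1$. The remaining partial turn contributes at most $\log 2$ times its proportion. Monotonicity of $r$ in $u$ lets us compare with the $a=0$ case.
\end{itemize}
Optimizing over the partial-turn length should give a maximum near $\approx0.22$. I expect this to be handled by a rational certificate, and this is the step I expect to be the hardest.

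\emph{The bound \eqref{eq:I-square}.} Integrating the series termwise,
\[
 |I_\rho|\le\sum_\ell\frac{\rho^\ell(1-e^{-\ell a})}{\ell^2 a},
\]
but the claimed bound is quadratic in $\Li(\rho e^{-a})$. This suggests Cauchy--Schwarz. We write $I_\rho=\Rea\int_0^1\log(1-w(u))\dd u$. Since $\log|1-w|\le-\Rea w+\dots$ is unhelpful here, we instead use a Parseval-type pairing. Set $s_\ell=\rho^\ell/\ell$ and note that $\int_0^1e^{-\ell a u}\dd u\ge e^{-\ell a}$. The real part can be positive only through cancellation. I would bound the positive part by $\sum_{\ell,m}$ cross terms and apply Cauchy--Schwarz with weights $\ell^{-2}\rho^\ell$, whose sum is $C_\rho$. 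This yields
\[
 \Big(\sum_\ell\frac{(\rho e^{-a})^\ell}{\ell^2}\Big)^2\Big/(4C_\rho a).
\]
If that fails, an alternative is the exact identity for the $u$-integral over $[0,\infty)$, namely $-\Rea\Li(\rho e^{-a+\ii b}\cdot)/(a-\ii b)$-type terms, combined with an AM--GM step.

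\emph{The gap \eqref{eq:root-one-gap}.} We take $\rho=999/1000$. For small $t$, use the first bound: $A(t)\ge A(1/10)$ by monotonicity, since $A'=-\beta<0$, and $4\cdot9/40=9/10$. We then need $A(t)-9/10>4/15$ on $[0,1/10]$. Here $A(0)=\log5$, which gives roughly $1.609-0.9$, and $A(1/10)$ is evaluated via \eqref{eq:A} with certificates. For $1/10\le t\le1/2$, use \eqref{eq:I-square} with $a=5t$, which gives a decreasing bound. Combined with $A(t)\ge A(1/2)$ and a subdivision into a few subintervals, checked by the rational bounds of the appendix, this gives the gap $21/50$. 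The uniformity in $y$ is automatic, since both bounds are independent of $b$.
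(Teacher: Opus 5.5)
There is a genuine gap in the first assertion. The whole difficulty is the reduction from $a>0$ to $a=0$, and neither of your mechanisms achieves it. For fixed $b$, the case $a=0$ is \emph{not} the worst case. As $b\to0$ one has $I_\rho(0,b)\to\log(1-\rho)<0$, while $I_\rho(a,b)\to0$ as $a\to\infty$. Pointwise, $\log|1-re^{\ii\phi}|$ is not monotone in $r$: when $\cos\phi>0$ it decreases and then increases. Likewise, ``each full turn averages to zero'' needs $r$ constant on the turn, but $r=\rho e^{-au}$ decays within it. So turns need not cancel, and no monotonicity in $r$ restores the cancellation.

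The missing idea is analytic. One has $I_\rho(a,b)=\Rea F_\rho(a-\ii b)$ with $F_\rho(z)=\int_0^1\log(1-\rho e^{-zu})\dd u=[\Li(\rho e^{-z})-C_\rho]/z$. This function is analytic on $\Rea z>0$, continuous up to the imaginary axis, and satisfies $|F_\rho(z)|\le2C_\rho/|z|$. The maximum principle on expanding half-discs then gives $I_\rho(a,b)\le\max\{0,\sup_{b'}I_\rho(0,b')\}$.

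Even at $a=0$, your partial-turn estimate ($\log2$ times the proportion) gives about $\log2$ when $|b|<2\pi$, far above $9/40$. What you need is the structure of $g_\rho(v)=\log|1-\rho e^{\ii v}|$: it is even, $2\pi$-periodic with zero mean, and increasing on $[0,\pi]$. Hence $\int_0^Tg_\rho\le0$ for $T\le\pi$, and $\int_0^Tg_\rho=-\int_0^{2\pi-T}g_\rho$ for $T\in[\pi,2\pi]$. You then need a quantitative small-$s$ bound, e.g.\ $-\int_0^sg_\rho\le s(1-\log s)+s^3/42+s/198$ from $|1-\rho e^{\ii v}|^2\ge\rho|1-e^{\ii v}|^2$, compared with $(9/40)(2\pi-s)$. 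Additional full periods only dilute the average.

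For \eqref{eq:I-square}, there are no cross terms for Cauchy--Schwarz to act on. $I_\rho$ is linear in the series, and summing your termwise expansion gives exactly $\Rea F_\rho(z)$ with $z=a-\ii b$. Then $|\Li(\rho e^{-z})|\le\Li(\rho e^{-a})=:L$ and $\Rea(-C_\rho/z)=-C_\rho a/|z|^2$ give $I_\rho\le Lr-C_\rho ar^2\le L^2/(4C_\rho a)$ with $r=1/|z|$. Your fallback ``exact identity plus AM--GM'' is this argument, but the integral runs over $[0,1]$, not $[0,\infty)$, and these two facts are what must be stated.

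Once the first two assertions are in place, your third part is essentially the paper's argument, with two cautions. For $t\le1/10$ you need $A(1/10)>7/6$. This is true ($A(1/10)\approx1.19$), but the cheap bound $[13/50-\Li(e^{-1/2})/5]\cdot10\approx1.125$ is not enough. The paper instead uses Jensen: $A(t)\ge\log D(e^{-5t/2})>\log D(7/9)>7/6$. On $[1/10,1/2]$, the single bound $A(t)\ge A(1/2)\approx0.49$ cannot beat the dilogarithm term, which is about $2/3$ at $t=1/10$. On each subinterval, use $A$ at the right endpoint and the decreasing bound $\Li(e^{-5t})^2/(8t)$ at the left endpoint, with $C_{999/1000}>8/5$.
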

\begin{proof}
Let $g_\rho(v)=\log|1-\rho e^{\ii v}|$.
It is even and periodic, has zero half-period integral, and increases on
$[0,\pi]$. Thus its prefix integral on this interval is nonpositive.
For $T\in[\pi,2\pi]$, put $s=2\pi-T$. The integral up to $T$ equals the
negative integral up to $s$.
The inequality
$|1-\rho e^{\ii v}|^2\ge\rho|1-e^{\ii v}|^2$ and $\log\rho\ge-1/99$ give
\[
 -\int_0^s g_\rho(v)\dd v\le s(1-\log s)+s^3/42+s/198.
\]
Here $\log|1-e^{\ii v}|\ge\log v-v^2/14$ on $(0,\pi]$,
by the cubic sine lower bound and $\pi^2<10$.
To bound the last expression by $(9/40)(2\pi-s)$, let
$f(s)=s(49/40-\log s)+s^3/42+s/198$.
For $s\le9/5$,
\[
 f(s)\le e^{9/40}+243/1750+1/110<45/32<9\pi/20.
\]
On $[9/5,\pi]$, $f'(s)=9/40-\log s+s^2/14+1/198$ is convex and is
negative at both endpoints, using $\log(9/5)>1/2$, $\log\pi>1$ and
$\pi^2<10$. Hence $f$ decreases there.
Removing complete periods extends the average bound to arbitrary $T$.
It follows that $I_\rho(0,b)\le9/40$.

The function
$F_\rho(z)=[\Li(\rho e^{-z})-C_\rho]/z$ is analytic in the right
half-plane, continuous at $z=0$, and tends uniformly to zero at infinity.
Its real part at $a-\ii b$ is $I_\rho(a,b)$.
The maximum principle on expanding half-disks proves the first assertion.
For the second, with $L=\Li(\rho e^{-a})$ and
$r=(a^2+b^2)^{-1/2}$, bound the real part by
$-C_\rho ar^2+Lr\le L^2/(4C_\rho a)$.

For $t\le1/10$, Jensen's inequality gives
$A(t)\ge\log D(e^{-5t/2})>\log D(7/9)>7/6$,
which gives the first line of \eqref{eq:root-one-gap}.
For the second, $C_{999/1000}>8/5$, since
\[
 \pi^2/6-C_\rho\le(1-\rho)(1-\log(1-\rho))/\rho<8/999.
\]
Equation~\eqref{eq:I-square} gives
$4I_\rho(5t,5y)<\Li(e^{-5t})^2/(8t)$.
On each of the forty intervals $[l,r]=[1/10+j/100,1/10+(j+1)/100]$,
the rational comparison
\begin{equation}\label{eq:forty}
 \frac{13/50-\Li(e^{-5r})/5}{r}
           -\frac{\Li(e^{-5l})^2}{8l}>\frac{21}{50}
\end{equation}
holds with the specified series bounds in the appendix.
Use monotonicity of $A$ and of $\Li(e^{-5t})^2/t$ to cover each full interval.
\end{proof}

\subsection{The central complementary contour}
\begin{lemma}\label{lem:central-minor}
For $n\ge N$ and $0\le t\le1/2$, if
$\dist(\theta,\{2\pi h/5:1\le h\le4\})\ge1/(5n)$, then
\begin{equation}\label{eq:central-minor}
 |S_n(e^{-t/n+\ii\theta})|\le e^{nA(t)-\gamma n}.
\end{equation}
\end{lemma}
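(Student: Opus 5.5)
Write $q=e^{-t/n+\ii\theta}$, $\delta=t/n\le1/(2n)$, and start from
\[
 \log|S_n(q)|=\sum_{j=1}^n\sum_{s=1}^4\log|1-q^{5j-s}|
\]
rather than from the infinite product. Here $\delta$ is too small for the cusp estimate \eqref{eq:infinite-minor} to beat the main term $nA(t)$ with a uniform gap $\gamma n$, so the finite structure must be used. Group the factors by $j$. With $X_j=e^{-5\delta j}e^{5\ii\theta j}$, the $j$-th block is
\[
 \prod_{s=1}^4\bigl(1-q^{5j-s}\bigr)=\prod_{s=1}^4\bigl(1-X_j\,q^{-s}\bigr).
\]
It is compared with $D(e^{-5\delta j})$, the modulus at $\theta\in\{2\pi h/5\}$ up to the amplitude factor. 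The plan has three cases according to $\theta$.

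\emph{Near a primitive fifth root.} Suppose $\theta=2\pi h/5+y/n$ with $1/5\le|y|\le$ a small constant. Use the central Euler--Maclaurin expansion of Lemma~\ref{lem:central-EM} extended to $|v|\le1/5$, and \eqref{eq:central-loss}, to get $\Rea F\le-\frac16nVv^2$. At $|v|=1/5$ this gives a loss $\ge nV/150\ge n/75$ by \eqref{eq:center-V}, which exceeds $\gamma n=9n/800$. For larger $|v|$ up to a fixed constant $v_1$, continue with the characteristic-function bound \eqref{eq:weight-loss}, integrated in $u$: the weights $w_m$ are bounded below uniformly for $t\le1/2$. This yields $\log D-\log|D(\cdot)|\ge$ a constant times $\min(v^2,1)$ on average. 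The sum then gives $nA(t)-cn$ with a fixed $c>\gamma$ throughout the range.

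\emph{Near $\theta=0$ and other roots of unity.} For $\theta$ far from the fifth roots, treat each factor $|1-q^{k}|$ as sampling $\log|1-\rho e^{-au+\ii bu}|$. Use the root-one integral of Lemma~\ref{lem:root-one}. The choice $\rho=999/1000$ is absorbed by splitting $\log|1-q^k|\le\log|1-\rho q^k|+O(1-\rho)$, with errors handled by the Riemann-sum monotonicity, i.e.\ total variation of the integrand divided by $n$. Thus $\log|S_n|\le4nI_\rho(5t,5y)+O(\log n)$. The gap \eqref{eq:root-one-gap} gives $nA(t)-\tfrac4{15}n$, far more than needed. For denominators $m\ne1,5$, Dirichlet approximation of $5\theta$ reduces to a root-one integral along the subsequence $j\equiv$ const mod $m$. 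Averaging then gives at most $nA(t)$ minus a positive multiple of $n$; alternatively, apply the Weyl-type convexity bound $9/40$ from Lemma~\ref{lem:root-one} to each residue class.

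\textbf{Main obstacle.} The hardest step is the transition zone, where $5\theta$ is close to $2\pi h$ but at distance $\gg1/n$, and the intermediate-denominator case. There, neither the local Gaussian nor the root-one integral applies directly, and the constant $\gamma=9/800$ must be verified explicitly. I would first try to cover it by the prefix-integral monotonicity argument of Lemma~\ref{lem:root-one}, applied to the shifted phase $5\theta-2\pi h$. Since $A(t)$ is decreasing and the certificates give a fixed numeric gap, the remaining task is a finite rational comparison across a partition of $t\in[0,1/2]$, as in \eqref{eq:forty}. Riemann-sum errors are $O(1/n)$ relative and are absorbed using $n\ge N=1750$.
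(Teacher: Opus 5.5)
\textbf{The gap is the far arc.} This is every $\theta$ at distance $\ge5/n$ from the primitive fifth roots and $\ge20/n$ from $1$. It includes the transition zone and the intermediate denominators, which you flag as the main obstacle but leave unresolved. Neither of your two suggestions works as stated.

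\emph{The $9/40$ bound.} The bound $I_\rho(a,b)\le9/40$ of Lemma~\ref{lem:root-one} concerns only the unrotated integrand $\log|1-\rho e^{-au+\ii bu}|$, whose prefix integrals start at phase $0$. In a residue class modulo $m$, the factors carry a fixed rotation $e^{2\pi\ii ar/m}$, and the factors $q^{-s}$ add further rotations. A rotated average $\int_0^1\log|1-\rho\,\omega e^{-au+\ii bu}|\,du$ tends to $\log|1-\rho\omega|$ as $a,b\to0$, which can approach $\log(1+\rho)\approx\log 2$. Even granting $9/40$ per factor, you would only get $\log|S_n|\le\frac{9}{10}n$. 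But $A(t)-\gamma<9/10$ for all $t\ge1/5$ (for instance $A(1/5)\approx0.91$ and $A(1/2)\approx0.49$), so this cannot give $nA(t)-\gamma n$ on $[0,1/2]$.

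\emph{The Dirichlet route.} It would need a uniform analysis of the \emph{finite} product at every cusp, with denominators up to a growing bound. The Riemann-sum (total variation) errors grow with $m$ and with the winding number. Denominators divisible by $5$ need separate treatment, because the excluded factors $1-q^{5j}$ then resonate as well. None of this is quantified. With $\gamma=9/800$ at $n=1750$ there is no room for unquantified losses, and ``averaging gives a positive multiple of $n$'' is not an argument.

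\textbf{The missing idea is a second-moment bound, which makes the cusp analysis unnecessary.} By concavity of $\log$,
\[
\log|S_n(q)|\le2n\log\Bigl(\frac1{4n}\sum_{j,s}|1-q^{5j-s}|^2\Bigr),
\qquad |1-q^k|^2=1-2\Rea q^k+|q|^{2k}.
\]
So only two quantities enter:
\begin{itemize}
\item the geometric sum $m_n(q)=\sum_{j,s}q^{5j-s}=(1-q^{5n})\bigl[(1-q)^{-1}-(1-q^5)^{-1}\bigr]$;
\item the radial average $\frac1{4n}\sum|q|^{2k}\le H(t)+1/n$, where $H(t)=(1-e^{-10t})/(10t)$.
\end{itemize}
Outside primitive-root windows of half-width $5/n$ and a root-one window of half-width $20/n$, one has $|m_n(q)|/(4n)<1/n+1001\pi/100000$. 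Hence
\[
\log|S_n(q)|\le2n\log\bigl[1+H(t)+3/n+2(1001/1000)\pi/100\bigr].
\]
A one-variable rational comparison with $A(t)-\gamma$ on fifty $t$-cells then settles the whole far arc at once, including all intermediate denominators and your transition zone.

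Your local arguments are then needed only on the bounded windows $1/5\le|y|\le5$ near primitive roots and $|y|\le20$ near $1$. There they agree in outline with the paper, but they still need explicit control:
\begin{itemize}
\item The Riemann-sum error is the total variation. For $t\le1/10$ this requires smoothing via $\log|1-w|\le\log|1-\rho w|+\log\frac{2}{1+\rho}$, with $\rho=999/1000$, and $\rho=49/50$ for larger $|y|$ at root one.
\item Lemma~\ref{lem:central-EM} covers only the endpoint $|y|=1/5$ of the complementary region, not an interval beyond it.
\item The tightest margin is only $1/52500$, so ``absorbed using $n\ge N$'' is not automatic.
\end{itemize}
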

\begin{proof}
We divide the circle into a far arc, primitive-root neighborhoods of half-width
$5/n$, and a root-one neighborhood of half-width $20/n$.
The neighborhoods are disjoint for $n\ge N$.

\emph{Far arc.}
Let $q=e^{-t/n+\ii\theta}$ and
\[
 m_n(q)=\sum_{j,s}q^{5j-s}=(1-q^{5n})\left[\frac1{1-q}-\frac1{1-q^5}\right].
\]
Outside the neighborhoods just specified,
\begin{equation}\label{eq:far-m}
 \frac{|m_n(q)|}{4n}<\frac1n+\frac{1001\pi}{100000}.
\end{equation}
If the distance to $1$ is at most $\pi/5$, the nearest fifth root is $1$,
and the geometric sum bound gives the second term alone, using distance
$20/n$ and $e^{3/(4n)}<1001/1000$.
Otherwise $|\sin(\theta/2)|\ge\sin(\pi/10)>3/10$, and the displayed
decomposition bounds its first term by $1/n$.
The second term uses $|\sin(5\theta/2)|\ge25/(\pi n)$ and
$e^{5/(4n)}<1001/1000$.
For $H(t)=(1-e^{-10t})/(10t)$, with $H(0)=1$, shifted rectangular summation
gives $(4n)^{-1}\sum_{j,s}|q|^{2(5j-s)}\le H(t)+1/n$.
Arithmetic--geometric mean applied to the squared factor moduli yields
\begin{equation}\label{eq:far-log}
 \log|S_n(q)|\le2n\log[1+H(t)+3/n+2(1001/1000)\pi/100].
\end{equation}
On the fifty intervals $[j/100,(j+1)/100]$, use $A$ at the right endpoint
and $H$ at the left endpoint. Lower bounds for $A(r)$ are
$\log D(e^{-5r/2})$ if $r\le1/10$, and
$[13/50-\Li(e^{-5r})/5]/r$ otherwise.
The resulting rational gaps in \eqref{eq:far-log} all exceed $\gamma$.
This proves \eqref{eq:central-minor} on the entire far arc.

\emph{Primitive-root integral losses.}
Write $\theta=2\pi h/5+y/n$, $1\le h\le4$.
The four logarithmic factor integrals add to
$\int_0^1\log|D(e^{-5tu+5\ii yu})|\dd u$.
Use \eqref{eq:weight-loss} with
\[
 w_m(X)=\frac{X^m\sum_{j=0}^{4-m}X^{2j}}{D(X)^2}.
\]
The functions $w_2,w_3,w_4$ increase on $[0,1]$;
$w_1\le1/5$ has a single internal maximum.
For $1/13\le X\le1$, $w_1\ge13/200$ and $w_2\ge1/200$.
For $u\le1/2$, $X=e^{-5tu}>1/4$, and stronger bounds are
$w_1\ge3/20$, $w_2\ge3/80$, $w_3\ge9/1000$, $w_4\ge1/500$.
These facts follow from the polynomial certificates in
Appendix~\ref{app:certificates}.

For $1/5\le|y|\le3/10$, partition $u$ into twenty intervals $[a,b]$.
Use $\min(w_1(e^{-5b/2}),4/25)$ for $m=1$, and
$w_m(e^{-5b/2})$ for $m\ge2$, replacing exponentials by rational lower bounds.
The angle belongs to $[ma,(3/2)mb]\subset[0,6]$.
Its cosine maximum is at an endpoint. A twenty-cell lower integral sum,
with weights rounded downward to $10^{-7}$ and cosine upper bounds of degree
$28$, gives an angular loss $>69/2000$.
For $3/10\le|y|\le1/2$, the full-interval loss is at least $1/40$.
The stronger weights on $u\le1/2$ add the three terms
\[
 (3/20-13/200)/25+(3/80-1/200)/6+(9/1000)(5/18),
\]
and the fourth angle adds $1/1500$.
For $1/2\le|y|\le1$, the full-interval loss is $43/1000$;
the four half-interval integrals are bounded below by
$1/10,3/10,11/30,2/5$.
Combining them gives the following uniform raw losses:
\begin{equation}\label{eq:primitive-losses}
 \begin{array}{c|ccc}
 |y| &[1/5,3/10]&[3/10,1/2]&[1/2,1]\\ \hline
 \text{loss}&69/2000&2219/60000&1307/20000.
 \end{array}
\end{equation}
For example, the middle loss is
$1/40+(3/20-13/200)/25+(3/80-1/200)/6+(9/1000)(5/18)+1/1500$.
The half-interval integral is
$1/2-\sin(5my/2)/(5my)$; monotonicity of sinc on its first positive interval
and $\sin s\le1$ prove the bounds just used.

For $1\le|y|\le5$, put $W=\sum_mw_m=[1-D(X^2)/D(X)^2]/2$.
This increases in $X$, since
\[
 \frac{\mathrm d}{\mathrm dX}\log\frac{D(X^2)}{D(X)^2}
  =\frac2X(\EE_{X^2}R-\EE_XR)\le0.
\]
A hundred-cell lower sum yields
\[
 \int_0^1W(e^{-5u/2})\dd u>.23108294>231/1000.
\]
Integration by parts gives, for $m\ge2$, the cosine integral bound
$w_m(1)/(5m|y|)$.
For $m=1$, its total variation plus terminal value is at most
$2/5-w_1(1)=6/25$.
Since $w_m(1)=(5-m)/25$, their sum is at most $101/(1500|y|)$.
The raw angular loss is consequently at least
\begin{equation}\label{eq:primitive-long}
 231/1000-101/(1500|y|).
\end{equation}

\emph{Variation for $t\ge1/10$ and $1/5\le|y|\le1$.}
The unsmoothed functions
$g_s(u)=\log|1-\zeta^{-hs}e^{-5(t-\ii y)u}|$ have no singularity.
Shifted rectangular summation has error at most $\sum_s\TV(g_s)$.
Set $u_0=\pi/(25|y|)$.
On the initial interval, the four phases are a permutation of $2\pi j/5$,
and drift by at most $\pi/5$.
The inequality
\[
 \frac r{|1-re^{\ii\phi}|}\le\frac1{2\sin(\phi/2)}\quad(0\le r\le1)
\]
follows by squaring and reducing to
$(1-r)(1+r-2r\cos\phi)\ge0$.
Hence the initial total variation is at most
\[
 C_0\sqrt{1+(t/y)^2},\qquad
 C_0=\sum_{j=1}^4\int_0^{\pi/5}
        \frac{\dd s}{2\sin(\pi j/5+s/2)}<19/10.
\]
The last strict bound is a forty-cell upper sum with rational sine lower
bounds. Negative $y$ gives the same sum by reflection of the phases.
On $u\ge u_0$, the radius is at most $R=e^{-\pi t/(5|y|)}$.
Radial and angular variation then give the bound
$(12+20|y|/\pi)\log[(1+R)/(1-R)]$.
The upper bound function
\[
 F_y(s)=\frac{19}{10}\sqrt{1+s^2}
 +(12+20|y|/\pi)\log\coth(\pi s/10),\quad s=t/|y|,
\]
decreases for $0<s\le5/2$:
its derivative is less than
$19/10-(12+20|y|/\pi)(\pi/5)/\sinh(\pi s/5)<0$.
Use $e^{11/7}<5$ and $\pi s/5\le\pi/2<11/7$ for the last comparison.
Replacing $t$ by $1/10$ in this upper bound, and bounding the two terms on
each $y$ interval, gives total variation bounds $34,45,66$.
Subtracting these divided by $N$ from \eqref{eq:primitive-losses} leaves
more than $\gamma$; the tightest difference above $\gamma$ is
\begin{equation}\label{eq:variation-margin}
 2219/60000-45/1750-9/800=1/52500>0.
\end{equation}

\emph{Smoothing for the remaining neighborhoods.}
For $|w|\le1$ and $0<\rho<1$,
\[
 \log|1-w|\le\log|1-\rho w|+\log\frac2{1+\rho}.
\]
For the four smoothed factors, put $E=\log[(1+\rho)/(1-\rho)]$.
The radial variation of each factor is at most $E$, and its angular
variation over an angle of length $5|y|$ is at most $(5|y|/\pi+2)E$.
Thus the total variation is at most $(12+20|y|/\pi)E$.
For $\rho=999/1000$, the smoothing cost is $<.00201n$ and total variation
is $<92+49|y|$. Each probability pair in \eqref{eq:weight-loss} is reduced
by a factor no smaller than $\rho^8>99/100$.

If $t\le1/10$, then $X>3/5$, so $w_1\ge4/25$, $w_2\ge1/10$.
The raw losses on the three short intervals in \eqref{eq:primitive-losses}
are now at least $39/500$, $3/25$, $22/125$.
Their variation bounds are $107,117,141$.
Multiplying each loss by $.99$ and subtracting the variation divided by
$N$ and $.00201$ still leaves more than $\gamma$.
For $1\le|y|\le5$, the same smoothing applied to
\eqref{eq:primitive-long} leaves the concave function
\[
 .99[231/1000-101/(1500|y|)]-(92+49|y|)/N-.00201.
\]
Both endpoints exceed $\gamma$.

At root one, $h=0$, the integral is $4I_\rho(5t,5y)$.
For $t\le1/10$, $|y|\le3$, use the loss $4/15$ of
Lemma~\ref{lem:root-one} and variation $239$.
For $t\ge1/10$, $|y|\le5$, use the loss $21/50$ and variation $337$.
Again subtract $.00201$ and the variation divided by $N$.
For larger $|y|$, choose $\rho=49/50$ instead.
Now the smoothing cost is $<.0403n$ and total variation is
$<276/5+(147/5)|y|$, since $E=\log99<23/5$.
No use is made here of the first assertion of Lemma~\ref{lem:root-one}.
The dilogarithm formula implies
$|4I_\rho(5t,5y)|<8/(3|y|)$ for every $\rho$, and, when $t\ge1/10$,
$4I_\rho(5t,5y)<3/(5|y|)$ by the nonpositive constant part of its numerator.
Using $A(t)>7/6$ for $t\le1/10$ and $A(t)>12/25$ otherwise gives
the raw losses $7/6-8/(3|y|)$ on $[3,20]$ and
$12/25-3/(5|y|)$ on $[5,20]$.
Subtract $.0403+[276/5+(147/5)|y|]/N$.
Both resulting functions are concave and their endpoint values exceed
$\gamma$.

Every comparison in this proof either holds pointwise on an interval or
is an endpoint comparison justified by monotonicity or concavity.
The listed rational interval certificates are reproduced by the accompanying
program. These regions cover the full complementary contour.
At $t=0$, zero factors are handled by modulus zero and the smoothed bounds
remain valid. This completes the proof.
\end{proof}

\section{Coefficient extraction and completion of the analytic argument}
\label{sec:assembly}
Let $r=d\bmod5$. Cauchy's formula on the circle of radius $e^{-t/n}$ is
\begin{equation}\label{eq:cauchy}
 a_d(n)=\frac{e^{td/n}}{2\pi}
 \int_{-\pi}^{\pi}S_n(e^{-t/n+\ii\theta})e^{-\ii d\theta}\dd\theta.
\end{equation}
It remains valid at $t=0$, since $S_n$ is a polynomial.
Around $\theta_h=2\pi h/5$, write $\theta=\theta_h+v/n$ and $z=t-\ii v$.
The central arcs have $|v|\le1/5$; the other arcs have $|v|\le2t$.
After summing the four arcs, the reference quantities are
\begin{align}
 M_d&=\frac{e^{nA(t)+td/n-t/(6n)}\Gamma_r(t)}
              {\sqrt{2\pi n^3V(t)}},
 &&d=n^2\beta(t),\label{eq:mainterm-o}\\
 M_d&=-\frac{e^{nA(t)+t(d/n-5)-t/(6n)}\mathcal B_r(e^{-5t})}
              {\sqrt{2\pi n^3V(t)}},
 &&d-5n=n^2\beta(t),\quad r\in\rc.\label{eq:mainterm-c}
\end{align}
The first expression is used for every residue in the center, and for
$r\in\ro$ elsewhere. The second is used for $r\in\rc$ when $t\ge1/2$.
In both cases $\epsd M_d>0$ by Lemma~\ref{lem:amplitude-sign}.
The exact factorization $\Gamma_r(z)=-e^{-5z}\mathcal B_r(e^{-5z})$
moves the factor $e^{5\ii v}$ into the exponential in the second case.
Thus the linear term cancels precisely for the saddle equation in
\eqref{eq:mainterm-c}; using the unshifted equation there would leave a
nonzero oscillatory linear term.

We normalize the local integral by $\sqrt{nV/(2\pi)}$.
The reference Gaussian integral is then one. Errors below are relative to
$|M_d|$. A bound strictly less than one implies $\epsd a_d(n)>0$.
We retain separate bounds for the inner phase, amplitude, local expansion,
outer part of the four arcs, and complementary contour.

\subsection{The central range}
Suppose $0\le t\le1/2$ and $n\ge N$.
Then $nV\ge3500$ and the inner interval $|v|\le1/20$ has Gaussian
endpoint $U_0^2\ge35/4$. The phase bound of
Lemma~\ref{lem:central-phase} and the Gaussian truncation give
\[
 \frac{1000}{nV}+\frac{2e^{-U_0^2/2}}{\sqrt{2\pi}U_0}<0.29.
\]
Here $U_0>2.9$, $e^{35/8}>79$, and $\sqrt{2\pi}>5/2$ suffice to
bound the second term by $0.004$.
The paired amplitude error is at most $1730/(nV)<0.495$.
The absolute phase integral over the whole central arc is less than two,
so the folded local error in \eqref{eq:central-EM-fold} contributes less than
\[
 2\frac{100}{7}\left(\frac2N+\frac{3250}{N^2}\right)<0.064.
\]

For clarity, the elementary envelope used on the remaining parts of the
central arcs is
\begin{equation}\label{eq:outer-integral}
 \sqrt{\frac{nV}{2\pi}}
 \int_{|v|\ge a}(1+C|v|)e^{-g nVv^2}\dd v
 \le\frac{e^{-g nVa^2}(1/a+C)}{g\sqrt{2\pi nV}}.
\end{equation}
Apply it with $(a,C,g)=(1/20,55,3/8)$ and $(1/10,229,1/6)$.
The bounds \eqref{eq:H-outer} and \eqref{eq:central-loss}, together with
$\sqrt{2\pi nV}>148$, $e^{105/32}>26$, and $e^{35/6}>340$,
give contributions less than $0.052$ and $0.029$, respectively.
Using infinite positive envelopes in \eqref{eq:outer-integral} only enlarges
the original finite integrals. Their total contribution is less than $0.081$.

By Lemma~\ref{lem:central-minor}, the remaining contour contributes at most
\begin{equation}\label{eq:center-minor-relative}
 200n^{3/2}e^{-\gamma n}<0.042.
\end{equation}
To verify the prefactor, $|\Gamma_r(t)|\ge7e^{-5t}/10>7/130$,
$\sqrt{2\pi V}<21/2$, and $e^{t/(6n)}<101/100$ imply
$(21/2)(130/7)(101/100)<200$.
The function in \eqref{eq:center-minor-relative} decreases for $n\ge N$.
At $N$, use $\sqrt N<42$ and $e^{\gamma N}>350000000$.
Consequently,
\begin{equation}\label{eq:central-total}
 \left|\frac{a_d(n)}{M_d}-1\right|
 <0.29+0.495+0.064+0.081+0.042=\frac{243}{250}<1.
\end{equation}

\subsection{The intermediate range}
Let $1/2\le t\le2$. Here $k=cn/t>455/2$, $nV>105$, and
$\sqrt{nV}>10$. On $|v|\le t/4$, Lemma~\ref{lem:middle-phase}
gives phase error $85/k$ and a Gaussian truncation less than $0.0002$.
Indeed $U_0^2\ge k/16>225/16$ and $e^{k/32}>1200$.
Together these are less than $0.375$.

The disk bounds in Lemma~\ref{lem:disk} give normalized amplitude
derivative less than $9/4$. For a cancelling residue, the bound follows from
\[
 \frac{5x(32/9)}{203/275}+\frac7{24n}
 \le\frac{44000}{1827}x+\frac7{24n}<\frac94,
 \qquad x=e^{-5t}<\frac1{12}.
\]
For an ordinary residue one may use $(1600/81)x+7/(24n)<9/4$.
The normalized first absolute moment is bounded by
$16/(15\sqrt{nV})$; hence the amplitude error is less than $0.25$.
This estimate uses the factor $\sqrt{2\pi}$ in the normalization, as
specified in Lemma~\ref{lem:middle-phase}.

On the whole four arcs the absolute phase integral is less than $6/5$,
and the complex-to-real amplitude ratio is less than $7/4$.
The folded local error in \eqref{eq:folded-error} therefore contributes at most
\[
 \frac65\frac74
 \left[6\left(\frac9{2N}+\frac{162}{N^2}\right)+24\cdot10^{-8}\right]
 <0.034.
\]
We used $|z|\le\sqrt5t<(9/4)t$ and $u_*/x<e^{-100}<10^{-8}$.
The weighted outer part is at most the expression in
\eqref{eq:middle-outer}, which is less than $0.012$ for $k>455/2$.
The complementary contour, by \eqref{eq:middle-minor}, contributes less than
\begin{equation}\label{eq:middle-minor-relative}
 150000000 n^{3/2}e^{-13n/300}<0.0001.
\end{equation}
For example, $|\Gamma_r(t)|\ge(2/3)x$ in the cancelling classes,
$x>1/25000$, $V\le3c/t^3$, $t\ge1/2$, and $e^{t/(6n)}<2$ give
prefactor at most
$256(9/4)3(3/2)2(25000)=129600000<150000000$.
The ordinary classes satisfy the same bound. The right side of
\eqref{eq:middle-minor-relative} is decreasing for $n\ge N$ and the
endpoint comparison follows from the rational exponential certificates.
We obtain
\begin{equation}\label{eq:middle-total}
 \left|\frac{a_d(n)}{M_d}-1\right|
 <0.375+0.25+0.034+0.012+0.0001=0.6711<1.
\end{equation}

\subsection{The boundary range}
For $r\in\rc$, take $2\le t\le\alpha\sqrt n$, where
$\alpha=\sqrt{5c/49}\in(4/25,1/6)$.
For $r\in\ro$, take $2\le t\le\alpha_0\sqrt n$, where
$\alpha_0=\sqrt c/2\in(1/4,13/50)$.
In the former case $k>66$; in the latter $k>42$.
Lemma~\ref{lem:boundary-phase} gives phase errors less than $0.06$ and
$0.16$, respectively, and absolute phase integrals less than $6/5$.
By \eqref{eq:boundary-amplitude}, the amplitude errors are less than
\[
 \frac65\left(84e^{-10}+\frac{\alpha}{3\sqrt n}\right)<0.007,
 \qquad
 \frac65\left(80e^{-10}+\frac{\alpha_0}{3\sqrt n}\right)<0.008.
\]
The disk amplitude ratio is less than $101/100$ in this range.
For a cancelling residue the local error is less than
\[
 \frac65\frac{101}{100}
 \left[\frac{15}{8\sqrt n}+\frac{50}{9n}+20e^{-40}\right]<0.06.
\]
Indeed $Q_*\le3/(8\sqrt n)+10/(9n)$ and $u_*/x<e^{-40}$.
For ordinary residues use $x<1/20000$,
$Q_*\le117/(200\sqrt n)+8/(3n)$, and $u_*<e^{-40}$ in
\eqref{eq:folded-error}. This contributes less than $0.001$.

The normalized version of \eqref{eq:boundary-minor} is bounded by
\begin{align}
 &\frac{10000}{128}n^{3/4}
       e^{-(563/250)\alpha\sqrt n} &&(r\in\rc),\label{eq:boundary-minor-c}\\
 &\frac{10000}{128}n^{3/4}
       e^{-(37/25)\sqrt c\sqrt n} &&(r\in\ro).\label{eq:boundary-minor-o}
\end{align}
The prefactor and monotonicity calculation is as follows.
We use $\sqrt{2\pi}<13/5$, which follows by squaring and using $\pi<22/7$.
For $t\ge2$, Lemma~\ref{lem:disk} and $x<1/20000$ give
$\Br(x)>999/1000$ in the cancelling classes and
$|\Gr(x)|>13/10$ in the ordinary classes.
Since $n\ge1750$, $t\le\alpha_0\sqrt n$, and $\alpha_0<13/50$,
$e^{t/(6n)}<101/100$. Thus the real radius/amplitude ratios are bounded
by $51/50$ and $4/5$, respectively.
At $t=\alpha\sqrt n$, use $c/\alpha^2=49/5$,
$\alpha^{-1/2}<5/2$ and
$\sqrt{(201/100)(49/5)}<9/2$. The cancelling prefactor is less than
\[
 2\frac{51}{50}\frac{13}{5}\frac92\frac52
 =\frac{5967}{100}<60.
\]
At $t=\alpha_0\sqrt n$, use $\alpha_0=\sqrt c/2$,
$c^{-1/4}<10/7$, $2^{3/2}<3$, and $\sqrt{201/100}<3/2$.
The ordinary prefactor is less than
\[
 2\frac45\frac{13}{5}\frac32\,3\frac{10}{7}
 =\frac{936}{35}<27.
\]
Both are less than the unchanged common envelope $10000/128$.
The accompanying boundary certificate checks these comparisons using rational arithmetic.
In the cancelling case the dependence on $t$ is
$t^{-3/2}e^{5t-(37c/50)n/t}$, which increases for $t\ge2$.
At $t=\alpha\sqrt n$ its exponential factor is
$e^{-(563/250)\alpha\sqrt n}$, since $c/\alpha^2=49/5$.
In the ordinary case the derivative of the logarithm is
$[-3/2+(37/50)k]/t>0$; use $t=\alpha_0\sqrt n$.
Both displayed bounds then decrease as functions of $n\ge N$.
At $N$, $N^{3/4}<275$, $\sqrt N>41$,
$e^{(563/250)(4/25)41}>2500000$, and
$e^{(37/25)\sqrt c\sqrt N}>e^{30}>10^{12}$.
Thus the two complementary contributions are less than $0.01$ and $0.0001$.
The totals are
\begin{equation}\label{eq:boundary-total}
 \left|\frac{a_d(n)}{M_d}-1\right|
 <\begin{cases}0.137,&r\in\rc,\\0.1691,&r\in\ro.\end{cases}
\end{equation}
All hypotheses of the local estimates hold: $t/n<(13/50)/41<0.01$,
the logarithmic correction is less than $0.1$, and the transformed-product
errors have the bounds just used. The condition $|z|/n<0.01$ is unnecessary;
the relevant small radial parameter is $t/n$.

\subsection{Joining the degree intervals}
\begin{proof}[Proof of Theorem~\ref{thm:main}]
By \eqref{eq:reciprocity} it suffices to consider $d\le5n^2$.
Write $b=\beta(2)$ and $B=\beta(1/2)$.
For the ordinary residues, the exact boundary in Section~\ref{sec:edge}
covers $d\le5n$. The analytic boundary starts at
\[
 n^2\beta(\alpha_0\sqrt n)<\frac{cn^2}{\alpha_0^2 n}=4n
\]
and reaches $bn^2$. The intermediate range reaches $Bn^2$, and the central
range reaches $5n^2$. Strict monotonicity of $\beta$ supplies a unique
saddle for every degree in these intervals.

For residue $3$ the exact boundary reaches $15n-2$, and for residue $4$
it reaches $15n-1$. The shifted analytic boundary starts below
\[
 5n+n^2\beta(\alpha\sqrt n)<5n+\frac{c}{\alpha^2}n=14.8n
 <15n-2.
\]
It reaches $5n+bn^2$, and the intermediate range then reaches $5n+Bn^2$.
The center starts at $Bn^2$, so the two ranges overlap by $5n$.
The inequalities $b>1/20$ and $B<16/15$ ensure that the ranges are nonempty
and their joins occur below $5n^2$.
There are no gaps from integer rounding, since the real intervals overlap.
The four root arcs are disjoint: their half widths are $1/(5n)$ in the
center and at most $2(13/50)/41$ otherwise.
The error bounds \eqref{eq:central-total}, \eqref{eq:middle-total}, and
\eqref{eq:boundary-total} are all strictly less than one. They give the
desired sign on each analytic interval; the exact boundary gives the
remaining degrees. Reflection completes the proof.
\end{proof}

\begin{table}[ht]
\centering
\caption{Relative error budgets for $n\ge1750$. The phase column includes
Gaussian truncation; the outer column refers to the remainder of the four
principal arcs.}
\label{tab:budgets}
\small
\begin{tabular}{@{}lrrrrrr@{}}
\toprule
Range & Phase & Amplitude & Local & Outer & Complement & Total\\
\midrule
Center & .290 & .495 & .064 & .081 & .042 & .972\\
Intermediate & .375 & .250 & .034 & .012 & .0001 & .6711\\
Boundary, $r\in\rc$ & .060 & .007 & .060 & included & .010 & .137\\
Boundary, $r\in\ro$ & .160 & .008 & .001 & included & .0001 & .1691\\
\bottomrule
\end{tabular}
\end{table}

\section{Exact finite verification}\label{sec:exact}
The finite calculation uses integer arithmetic throughout. It can either
retain the whole polynomial with an unconditional digit width, or exploit
reciprocity with an inductively justified smaller width. We describe the
algorithm, the author-confirmed finite result, and the available computation records.

\subsection{Capacity and signed digits}
Let $A_r(n)=\sum_{d\equiv r\ (5)}a_d(n)$.
Since $S_n(1)=0$ and $S_n(\zeta^h)=5^n$ for $1\le h\le4$, roots-of-unity
filtering gives
\begin{equation}\label{eq:residue-sums}
 A_0(n)=4\cdot5^{n-1},\qquad A_r(n)=-5^{n-1}\quad(1\le r\le4).
\end{equation}
If the signs have been verified at stage $n$, it follows that
$\norm{S_n}_1=8\cdot5^{n-1}$.
The next block has coefficient $\ell^1$ norm at most $16$. Thus, before
checking stage $n\ge2$, the already verified preceding stage implies
\begin{equation}\label{eq:capacity}
 |a_d(n)|\le\norm{S_n}_1\le128\cdot5^{n-2}.
\end{equation}
For $n=1$, use the unconditional bound $16$.
This induction is not an assumption about the signs currently being tested:
the program stops on a violation and only advances from a verified stage.
For target $T$, choose a power of two $B$ with
$B\ge4\max(16,128\cdot5^{T-2})$ when $T\ge2$.
The choice $B=2^{4096}$ suffices for $T=1749$ by exact integer comparison.

Write the ordinary unsigned base-$B$ digits of the packed evaluation as
$u_0,u_1,\ldots$. Under the bound $|a_d|\le B/4$, the outgoing borrow is
$b_d=\mathbf1_{\{u_d\ge B/2\}}$, and
\begin{equation}\label{eq:decode}
 b_{-1}=0,\qquad a_d=u_d+b_{d-1}-Bb_d.
\end{equation}
Indeed $u_d$ belongs to the lower digit range for a nonnegative coefficient
and to the upper range for a negative coefficient, with the sole zero
exception caused by an incoming borrow. More precisely, $a_d=0$ exactly
when either $u_d=0$ and $b_{d-1}=b_d=0$, or $u_d=B-1$ and
$b_{d-1}=b_d=1$. All bytes of a candidate zero digit are checked.
Consequently the test is exact; no modular sign inference or floating point
coefficient approximation is involved.

\subsection{Reciprocal prefixes and updates}
At stage $n$ it suffices to decode the prefix $0\le d\le H_n=5n^2$.
Before multiplying the next block, first extend the \emph{old} polynomial
to the new cutoff using \eqref{eq:reciprocity}; coefficients beyond the old
degree are zero. Reversing unsigned digits alone would be incorrect.
If the source digit is $u$, its incoming and outgoing borrows are $p$ and
$b$, and the new incoming borrow is $c$, its coefficient is $u+p-Bb$.
The destination digit is therefore
\begin{equation}\label{eq:reflect-digits}
 u_{\rm new}=u+p-c\pmod B.
\end{equation}
On overflow the new outgoing borrow is $b-1$; on underflow it is $b+1$;
otherwise it is $b$. The coefficient bound ensures that it remains in
$\{0,1\}$. All source positions lie strictly below the old central degree,
so this extension can be performed in place.

Multiplication by a polynomial with nonnegative exponents cannot bring a
high coefficient into a lower position. Hence truncation after every factor
is legitimate. Evaluation at $B$ implements it as reduction modulo
$B^{H_n+1}$. The supplied kernel uses four factors $B^{5n-s}-1$ in a
block; their four minus signs cancel. Digits are decoded after the complete
block, when \eqref{eq:capacity} applies. No bound on the intermediate
signed polynomial is required for the modular integer operations.

The native implementation stores a single buffer of 64-bit limbs. For a
factor with shift $k$ limbs, multiplication is the exact modular operation
$X\mapsto(2^{64k}X-X)\bmod2^{64L}$. It processes descending bands of
length at most $k$, ensuring that each shifted source is read before it is
overwritten. Subtraction borrow propagation is determined from the original
band before the band is written; the parallel version places a barrier
between these steps. A portable implementation and a GMP implementation
are supplied, together with comparison tests against ordinary Python
integers and direct coefficient multiplication.
Checkpoint histories and source hashes detect inconsistent resumes.
They supplement the exact calculation; a hash by itself is not evidence
for a coefficient sign.

\subsection{Finite result and computation provenance}
The author confirms that the sign pattern in \eqref{eq:sign} has been checked
throughout $1\le n\le1749$. In particular, the interval
$501\le n\le1749$ was explicitly confirmed on 6 October 2026.
This is the finite computational input in Proposition~\ref{prop:finite}.
The supplementary file \texttt{data/author\_confirmation\_20261006.json}
records that confirmation. It is a provenance statement, rather than a
coefficient-by-coefficient execution log. Complete output rows are included
for $1\le n\le500$; full execution output for $501\le n\le1749$ is not
included in this supplement.

The supplied file \texttt{data/exact\_results\_500.json} records a complete
integer check of $1\le n\le500$, with zero violations in every row.
It uses the full polynomial, base $2^{2008}$, and the unconditional bound
$|a_d(n)|\le2^{4n}\le2^{2000}$, so its digit capacity does not depend on
previous sign checks. The record contains counts of positive, negative, and
zero coefficients by residue, and comparison with direct coefficient
expansion for $1\le n\le30$.
Its SHA-256 digest is printed in the supplementary manifest.

For the accompanying $1\le n\le500$ record, each row has degree $10n^2$, and its counts sum to $10n^2+1$.
The positive count is zero in residues $1,2,3,4$, and the negative count is
zero in residue $0$. The decoder \eqref{eq:decode} and the unconditional
capacity just stated give the sign of each coefficient exactly.
Summing the row lengths gives
\[
 \sum_{n=1}^{500}(10n^2+1)=417918000.
\]
The supplementary record check verifies the counts, coverage, and digest.
The remaining interval $501\le n\le1749$ is covered by the author's
confirmed computation.

The full range through $T=1749$ comprises
\begin{align*}
 \sum_{n=1}^{1749}(10n^2+1)&=17849275499
       &&\text{full coefficient positions},\\
 \sum_{n=1}^{1749}(5n^2+1)&=8924638624
       &&\text{explicit lower-half positions}.
\end{align*}
For the reciprocal-prefix algorithm at width $512$ bytes, the largest
lower-half buffer has
$(5\cdot1749^2+1)512=7831043072$ bytes.
These counts and storage requirements follow from the degree and encoding;
they are not runtime measurements. The author-confirmed full-range sign
verification supplies the finite part of Corollary~\ref{cor:full}, while
Theorem~\ref{thm:main} supplies its infinite part.

\subsection{Data and code availability}
The supplementary archive includes this article's TeX source, the exact
integer verifier, rational certificate programs and their output, the
complete finite record through $500$, and the author's confirmation for
$501\le n\le1749$. The mathematical derivations are given in the article;
the certificate programs reproduce the stated finite arithmetic comparisons.

\clearpage
\part{The Cubic Borwein conjecture}
\section{The Cubic analytic theorem}
Let
\[
 P_n(q)=\prod_{j=1}^n(1-q^{3j-2})(1-q^{3j-1}),\quad
 b_d=[q^d]P_n(q)^3,\quad \omega=e^{2\pi\ii/3}.
\]
\begin{theorem}[Cubic analytic theorem]\label{cub:thm:main}
For every integer $n\ge500$, the standard modulus-three Cubic signs hold
throughout $0\le d\le9n^2$: $b_d\ge0$ in residue zero and $b_d\le0$ in
residues one and two. In the lower half, residues zero and one have strict
signs; residue two has $b_d=0$ for $d\le3n$ and $b_d<0$ for $d>3n$.
\end{theorem}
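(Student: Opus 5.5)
The plan is to transplant the architecture of Part~I to modulus three. Write $P_n(q)^3=\prod_{j\le n}[(q;q)_{3n}/(q^3;q^3)_n]^3$ restricted appropriately, so that
\[
P_n(q)^3=S^{(3)}_\infty(q)\,T^{(3)}_n(q),\qquad
S^{(3)}_\infty(q)=\frac{(q;q)_\infty^3}{(q^3;q^3)_\infty^3},\qquad
T^{(3)}_n(q)=\prod_{\substack{k>3n\\3\nmid k}}(1-q^k)^{-3}.
\]
The exponent of $3$ makes the infinite product a genuine modular form quotient, $q^{1/4}\eta(\tau)^3/\eta(3\tau)^3$. Reciprocity $q^{9n^2}P_n(q^{-1})^3=P_n(q)^3$ (there are $6n$ factors, an even number) reduces everything to $d\le 9n^2/2$, and maps residue $r$ to $-r$ modulo $3$, preserving the sign pattern.

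For the exact boundary, I would use the classical $3$-dissection
\[
\frac{(q;q)_\infty^3}{(q^3;q^3)_\infty^3}=\frac{a(q^3)}{\ldots}-3q\,\frac{(q^9;q^9)^3_\infty}{(q^3;q^3)^3_\infty}\cdot\ldots,
\]
that is, the Borwein cubic theta identity $b(q)=\tfrac12(3a(q^3)-a(q))$ with $b(q)=(q;q)^3_\infty/(q^3;q^3)_\infty$. Dividing by $(q^3;q^3)^2_\infty$ shows that residue $0$ is a positive series, residue $1$ is $-3q\,c(q^3)/(\ldots)$ with positive coefficients, and residue $2$ vanishes identically. This is the analogue of \eqref{eq:dissection}, and it explains the claim $b_d=0$ for $d\le 3n$ in residue $2$: the tail $T_n^{(3)}$ starts in degree $3n+1$. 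Layering the tail by number of parts, as in \eqref{eq:tail-layers}, the first layer $Q^n h_1$ with $h_1=3(q+q^2)/(1-Q)$, $Q=q^3$, gives residue-$2$ coefficients equal to minus a positive series. This is the analogue of Lemma~\ref{lem:C}, presumably proved by a partition/recurrence identity for the relevant theta quotient. I would extend the range with the second layer by an injection argument like the one in Lemma~\ref{lem:edge}, and I also need strict positivity of the first layer to get $b_d<0$ for $d>3n$.

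For the analytic part I would take $A(z)=3\int_0^1\log D(e^{-3zu})\,du$ with $D(X)=1+X+X^2$, the saddle equations $d=n^2\beta(t)$ for residues $0,1$ and $d-3n=n^2\beta(t)$ for the cancelling residue $2$, and amplitudes $\lambda_h$ at the two primitive cube roots $\omega,\omega^2$, obtained from \eqref{eq:eta-transform} as in Lemma~\ref{lem:primitive-modular}. Then I would establish:
\begin{itemize}
\item amplitude sign lemmas, with $\Gamma_2(t)\le -c'x$ for $x=e^{-3t}$ via the factorization $\Gamma_2=-e^{-3z}\mathcal B_2$;
\item Euler--Maclaurin local expansions, with the $x$ retained before division;
\item paired Gaussian phase integrals in the center, intermediate and boundary ranges;
\item complementary-arc bounds using Dirichlet approximation and the general cusp formula, together with a root-one integral like Lemma~\ref{lem:root-one}.
\end{itemize}
With only two primitive roots, the constants (such as $\sum|c_s|$ and the gap $c$ versus the cusp exponents) change, and I would recompute each budget aiming at total relative error $<1$ for $n\ge500$. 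Strict inequality then yields the strict signs asserted in the lower half for residues $0$ and $1$ and for residue $2$ once $d$ is past the exact range. The joins of degree intervals are as in the proof of Theorem~\ref{thm:main}.

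The main obstacle is the central range at the smaller threshold $n\ge500$. The error budget in Table~\ref{tab:budgets} was already at $0.972$ with $N=1750$, and here the $1/(nV)$ terms are larger. I would first try to sharpen the paired amplitude error (the $1730/(nV)$ term) by expanding $H_r$ to second order and integrating the even part exactly against the Gaussian, rather than bounding $|R_H|$ crudely. I would also tighten the minor-arc constant $\gamma$ using the simpler three-term weight-loss inequality.
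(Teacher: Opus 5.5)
Your skeleton matches the paper's: the dissection $P_\infty^3=\mathcal A(Q)-q\mathcal B(Q)$ with vanishing residue two, the first tail layer $3Q^n(q+q^2)/(1-Q)$, the shifted saddle $d-3n$ for residue two, folded amplitudes, paired Gaussians and reciprocity. \textbf{The gap} is that it leaves unproved the step that decides whether $n\ge500$ is reachable: the join between the exact boundary and the analytic range in the cancelling residue.

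Since $\Gamma_2=-xH$ with $x=e^{-3t}$, every complementary-contour bound on $|q|=e^{-t/n}$ is compared with a main term smaller by $e^{-3t}$. The shifted saddle chooses the radius but does not remove this penalty. No pointwise complementary bound can beat $e^{3nA(t)-\frac94c\,n/t}$, with $c=\pi^2/27$, because near the cusps $1/6,5/6$ the modulus of $P_n^3$ is still about $e^{\frac34c\,n/t}$. Hence even ideally you need $t^2<\frac34cn$, i.e.\ $d-3n\approx3cn^2/t^2>4n$. With the $n^{3/4}$ prefactor at $n=500$, the analytic range cannot begin much below about $8.5n$.

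Your two-layer injection stops at $d=9n+2$, since three tail parts weigh at least $9n+3$. The overlap is therefore at best razor-thin and needs near-optimal complementary bounds at $n=500$, which you do not supply. Part~I's bounds do not transfer verbatim:
\begin{itemize}
\item the tightest margin $2219/60000-45/1750-9/800$ in Lemma~\ref{lem:central-minor} is negative with $500$ in place of $1750$;
\item Lemma~\ref{lem:global-minor} assumes $t/n\le1/100$, whereas this join has $t/n\approx1/50$.
\end{itemize}
In residues zero and one the first layer has the wrong sign ($-3Q^{n+1}\mathcal B/(1-Q)$ and $+3qQ^n\mathcal A/(1-Q)$). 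So there your analytic range must reach down to $d\le3n$, where $n/t\approx37$ at $n=500$.

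The paper avoids this by making the boundary band quantitative and pushing it to $30n$ in all three residues:
\begin{itemize}
\item Explicit Rademacher--Lehmer bounds for $p(k)$ and $\Delta p(k)$ give lower bounds for $Z_z$ and $R_z$.
\item The lattice pairing $(u,v)\leftrightarrow(-u,-v)$ in $\Theta_B-\Theta_A$ proves $\mathcal C=(\mathcal B-\mathcal A)/(1-Q)\succeq2\mathcal R$. This is the identity you left as ``presumably'', and it also gives strictness for $d>3n$.
\item With theta rectangles, this yields $\mathcal C_s>e^{a_P\sqrt s}/(100s^{3/4})$.
\item Poisson majorants bound $\mathcal A_m$ and $\mathcal B_m$ from above.
\item All higher layers $\mathscr R_n$ are controlled at once by evaluation at the real point $e^{-\alpha/3}$ with $\alpha\approx\sqrt{c_P/j}$, rather than one layer at a time.
\end{itemize}
This gives $|E_{n,d}|<\frac34\cdot3\mathcal C_s$ in residue two and relative perturbation below $1/5$ in residues zero and one. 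The analytic ranges then only need $n/t>109$, where even the crude WK complementary inequalities (loss roughly $\frac14n/t$) beat $e^{3t}$.

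Your diagnosis of the central range as the main obstacle is also misplaced. Keeping $\nu\le2/3$ in the tilted cumulants and using the exact folds $\Gamma_0=(1+x)H$, $\Gamma_1=-H$, $\Gamma_2=-xH$, the paper's compact-range budget is below $3/10$ at $n=500$.
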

We use the cubic theta identity of Wang and Krattenthaler
\cite[Section 3]{WangKrattenthaler}, the positive pairing proved below,
and DeSalvo--Pak's Rademacher and Lehmer estimates
\cite[formulas (4)--(5)]{DP}.
The primitive-root eta identity and shifted summation remainder are stated
in Appendix~\ref{app:cubic-identities}. The complementary contours use
the raw inequalities (9.10)--(9.14) and Lemma~6.1 of
\cite{WangKrattenthaler}; the later monotonicity assertion with the
restriction $n>546$ is not used. The explicit local and complementary
estimates below apply to all $n\ge500$.

\section{Stronger partition and central-window bounds}
Put
\[
 a_P=2\pi/\sqrt3,\quad b_P=\pi\sqrt{2/3},\quad c_P=\pi^2/3,
 \quad \mu(k)=b_P\sqrt{k-1/24},\quad p(-1)=0.
\]
The exact certificate supplies
$181/50<a_P<363/100$, $b_P>513/200$,
$\sqrt3<693/400$, $c_P>81/25$.
\begin{lemma}\label{cub:lem:p}
For all integers $k\ge100$,
\[
 p(k)>\frac{e^{\mu(k)}}{(73/10)k},\qquad
 \Delta p(k)>\frac{e^{\mu(k)}}{(13/2)k^{3/2}}.
\]
For all integers $k\ge1$, $p(k)<e^{b_P\sqrt k}/(4\sqrt3 k)$.
\end{lemma}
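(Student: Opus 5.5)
The plan is to derive all three inequalities from the Rademacher series and Lehmer's explicit remainder bound, in the form recorded by DeSalvo--Pak \cite[formulas (4)--(5)]{DP}. Their formula (4) isolates the leading Rademacher term
\[
 T(k)=\frac{\sqrt{12}}{24k-1}\left(1-\frac1{\mu(k)}\right)e^{\mu(k)}
      =\frac{1}{4\sqrt3\,(k-1/24)}\left(1-\frac1{\mu(k)}\right)e^{\mu(k)}.
\]
Formula (5) bounds the remainder by an explicit quantity of size $O(k^{-1/2}e^{\mu(k)/2})$. Since $\mu(k)\ge b_P\sqrt{99.95}>25$ for $k\ge100$, the remainder is smaller than $e^{-12}$ times the main term. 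All comparisons will be rational, using the certified bounds $b_P>513/200$ and $\sqrt3<693/400$.

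\emph{Lower bound for $p(k)$.} For $k\ge100$ we have $p(k)\ge T(k)(1-\eta)$ with $\eta<10^{-5}$. Also $1-1/\mu(k)>1-1/25$, and $k-1/24<k$. This gives
\[
 p(k)>\frac{(24/25)(1-\eta)}{4\sqrt3}\,\frac{e^{\mu(k)}}{k},
\]
and $4\sqrt3(25/24)<7.22<73/10$, which proves the first inequality.

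\emph{Lower bound for $\Delta p(k)=p(k)-p(k-1)$.} Here I would differentiate the main term. Put $g(x)=\frac{\sqrt{12}}{24x-1}(1-1/\mu(x))e^{\mu(x)}$. Then
\[
 g'(x)=g(x)\left(\mu'(x)-\frac{24}{24x-1}+\frac{\mu'(x)}{\mu(x)^2-\mu(x)}\right),
 \qquad \mu'(x)=\frac{b_P}{2\sqrt{x-1/24}}.
\]
By the mean value theorem, $T(k)-T(k-1)=g'(\xi)$ for some $\xi\in(k-1,k)$. On this interval $g'$ is increasing and $e^{\mu(\xi)}\ge e^{\mu(k)}e^{-\mu'(k-1)}$. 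Since $\mu'(k-1)\le b_P/(2\sqrt{98.9})<0.13$, the factor lost is at most $e^{-0.13}$. The logarithmic derivative of $g$ satisfies
\[
 \mu'(x)-\frac{24}{24x-1}\ge\frac{b_P}{2\sqrt x}\left(1-\frac{2}{b_P\sqrt x}\right)\ge\frac{b_P}{2\sqrt x}\cdot\frac{92}{100}.
\]
Combining these,
\[
 \Delta p(k)\gtrsim\frac{b_P\cdot0.92\cdot0.96\cdot e^{-0.13}}{8\sqrt3}\,\frac{e^{\mu(k)}}{k^{3/2}}.
\]
The constant is about $0.145$, which exceeds $2/13\approx0.1538$? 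It does not, so this crude chain falls just short of $1/(13/2)$. The constants must therefore be tightened. Instead of the exponential-factor estimate, I would evaluate $g'$ at $\xi=k-1/2$ by convexity, and bound $1-2/(b_P\sqrt k)$ and $1-1/\mu$ at $k=100$ directly. This gives roughly $b_P\cdot0.92\cdot0.96/(8\sqrt3)\approx0.164>2/13$. The two Lehmer remainders, at $k$ and at $k-1$, are each $O(e^{\mu/2})$ and are absorbed into the margin. This bookkeeping is the main obstacle, because the margin is only about 6\%. I would certify it as one rational monotone comparison at $k=100$, with every factor monotone in $k$.

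\emph{Upper bound for $p(k)$, all $k\ge1$.} For large $k$, use $T(k)\le\frac{e^{\mu(k)}}{4\sqrt3(k-1/24)}(1-1/\mu)$ together with $\mu(k)\le b_P\sqrt k-b_P/(48\sqrt k)$. The factor $(1-1/\mu)\,k/(k-1/24)$ is below $1$ with a margin of order $1/\sqrt k$. This margin dominates both the loss from $1/24$ and the Lehmer remainder once $k\ge k_0$, for a modest explicit $k_0$ of around $10$. For $1\le k<k_0$, I would compare against the exact integer values of $p(k)$ using rational lower bounds for the exponential. As an alternative for the whole range, the classical elementary bound $p(k)<e^{b_P\sqrt k}/(4\sqrt3k)\cdot$const could be used, but the direct finite check plus the Rademacher argument is cleaner.
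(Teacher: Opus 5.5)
Your route is the paper's: the leading Rademacher term (your $T(k)$ is the paper's $M(k)=\pi^2\mu^{-2}(1-1/\mu)e^{\mu}/(6\sqrt3)$), the DeSalvo--Pak/Lehmer remainder, a log-derivative argument for the difference, and an exact check for the upper bound at small $k$. The lower bound for $p(k)$ and the upper bound are fine in outline.

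\textbf{The gap is in the bound for $\Delta p(k)$, where your accounting is wrong.} This is the only delicate part of the lemma.

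\emph{Evaluating at the midpoint does not remove the exponential shift.} The value $g'(k-\tfrac12)$ carries $e^{\mu(k-1/2)}=e^{\mu(k)}e^{-(\mu(k)-\mu(k-1/2))}$. At $k=100$ this factor is $e^{-0.0642}\approx0.938$; it is only halved relative to your crude $e^{-0.13}$, not eliminated. Your figure $b_P\cdot0.92\cdot0.96/(8\sqrt3)\approx0.164$ omits it. Restoring it with your roundings gives $\approx0.1533<2/13\approx0.15385$, so the chain fails as written. In fact $\Delta p(100)\cdot100^{3/2}e^{-\mu(100)}=21339417\cdot10^{3}e^{-\mu(100)}\approx0.1554$. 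The true margin at $k=100$ is therefore about $1\%$, not $6\%$.

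\emph{Closing the step requires keeping every second-order term.}
\begin{enumerate}
\item Suppose you keep your structure, which drops the factor $(k/(k-\tfrac12))^{3/2}$ and the term $\mu'/(\mu^2-\mu)$ in the log-derivative. Even exact values $1-1/\mu(99.5)\approx0.9609$ and $1-2/(b_P\sqrt{99.5})\approx0.9218$ then give $\approx0.15378<2/13$. You must retain at least one of those two gains.
\item The factor $(k/(k-\tfrac12))^{3/2}$ decreases in $k$, so your claim that every factor is monotone in the right direction fails. It can be rescued by grouping it with $e^{\mu(k-1/2)-\mu(k)}$, since their product increases for $k\ge100$.
\item The Hermite--Hadamard midpoint inequality $T(k)-T(k-1)\ge g'(k-\tfrac12)$ needs $g'$ convex on $[99,\infty)$, i.e.\ $g'''\ge0$. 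You assert this but do not prove it.
\end{enumerate}

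\emph{How the paper does it.} The paper avoids convexity altogether. It writes $M(k)-M(k-1)=M(k)\bigl(1-e^{-\int_{k-1}^k(\log M)'}\bigr)$. It bounds the decreasing log-derivative $(\log M)'=\mu'\{1-2/\mu+1/[\mu(\mu-1)]\}>\lambda_P/\sqrt k$ on $[k-1,k]$, with $\lambda_P=(513/400)(47/51)$. It then uses $1-e^{-y}\ge y-y^2/2$; the term $-\lambda_P^2/20$ is exactly the half-step loss you omitted. Combined with $M(k)>(49/51)e^{\mu(k)}/[(693/100)k]$ and relative errors $1/(1000\sqrt k)$ at $k$ and $k-1$, this gives the normalized constant $(4900/35343)(\lambda_P-\lambda_P^2/20-1/500)\approx0.15390>2/13$. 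This holds uniformly for $k\ge100$, with a margin of only about $0.04\%$.
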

\begin{proof}
Set $M=\pi^2\mu^{-2}(1-1/\mu)e^\mu/(6\sqrt3)$.
The source Rademacher and Lehmer formulas at truncation 2 give
\[
 \frac{|p(k)-M(k)|}{M(k)}<
 \frac{(2+24/\mu)e^{-\mu/2}+2e^{-2\mu}+\mu^2e^{-\mu}}
 {1-1/\mu}<5e^{-\mu/2}\quad(\mu\ge24).
\]
Here $576e^{-12}<1/100$ and $2e^{-36}<1/100$ suffice for the final bound.
For $k\ge100$, $\mu(k-1)>51/2$ and $\sqrt k<\mu(k-1)$.
Thus both relative errors at $k$ and $k-1$ are below
$\delta=1/(1000\sqrt k)$, since $(255/2)e^{-51/4}<1/1000$.
On $[k-1,k]$,
\[
 (\log M)'=\mu'\{1-2/\mu+1/[\mu(\mu-1)]\}
 >\frac{\lambda_P}{\sqrt k},\quad
 \lambda_P=\frac{513}{400}\frac{47}{51}.
\]
Writing $h=1/\sqrt k\le1/10$, one obtains
\[
 \Delta p(k)>\frac{M(k)}{\sqrt k}
 \{\lambda_P-\lambda_P^2/20-1/500\}.
\]
Also $M(k)> (49/51)e^{\mu(k)}/[(693/100)k]$.
The rational prefactor for the difference is greater than $2/13$;
for $p$ it is greater than $10/73$.
The upper bound follows for $k\ge100$ from the same main term and the looser
relative error $1/(100\sqrt k)$: its ratio is bounded by
$(1+h/100)(1-h/3)(1+h^2/12)<1$.
For $1\le k<100$, the certificate verifies
$7kp(k)<e^{(513/200)\sqrt k}$ exactly.
\end{proof}

Let $Z=(Q;Q)_\infty^{-2}$, $\mathcal R=(1-Q)Z$, and let $Z_z,R_z$ be
their coefficients. Thus
$Z_z=\sum p(k)p(z-k)$ and $R_z=\sum p(k)\Delta p(z-k)$.
The coefficients $R_z$ are positive and nondecreasing, since
$\mathcal R=(1-Q)^{-1}\prod_{j\ge2}(1-Q^j)^{-2}$.
\begin{lemma}\label{cub:lem:RZ}
For all integers $z\ge400$,
\[
 Z_z>\frac{e^{a_P\sqrt z}}{12z^{5/4}},\qquad
 R_z>\frac{e^{a_P\sqrt z}}{8z^{7/4}}.
\]
For integers $z\ge1$,
\begin{equation}\label{cub:eq:Zupper}
 Z_z<\frac{e^{a_P\sqrt z}}{4(z+1)^{5/4}},\qquad Z_0=1.
\end{equation}
\end{lemma}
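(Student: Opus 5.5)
The plan is to treat $Z_z=\sum_{k=0}^z p(k)p(z-k)$ as a discrete convolution and compare it with a Gaussian integral around the central term $k\approx z/2$. For the upper bound, $\mu(k)$ may be replaced by $b_P\sqrt k$ through Lemma~\ref{cub:lem:p}, which gives
\[
 p(k)p(z-k)<\frac{e^{b_P(\sqrt k+\sqrt{z-k})}}{48\,k(z-k)}\quad(1\le k\le z-1).
\]
The two end terms $k=0$ and $k=z$ contribute $2p(z)<e^{b_P\sqrt z}/(2\sqrt3 z)$, and $b_P\sqrt z\le a_P\sqrt z-(\sqrt2-1)b_P\sqrt z$ makes these exponentially negligible relative to $e^{a_P\sqrt z}$. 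For the interior, write $k=z/2+s$. Concavity gives
\[
 \sqrt{z/2+s}+\sqrt{z/2-s}\le\sqrt{2z}-\frac{s^2}{\sqrt2\,z^{3/2}}\cdot\frac12\cdot(\text{explicit constant}),
\]
with $b_P\sqrt{2z}=a_P\sqrt z$. The quantity $1/(k(z-k))$ should be bounded by $4/z^2$ times a factor that is harmless where the Gaussian concentrates. Far from the center, the crude bounds $k(z-k)\ge z-1$ together with the exponent loss are enough. Summing the Gaussian, whose width is of order $z^{3/4}$, yields the constant times $z^{-2}\cdot z^{3/4}=z^{-5/4}$. The factor $(z+1)^{5/4}$ in place of $z^{5/4}$ is absorbed by the slack in the constant $1/4$ for large $z$, and I expect $1/48\cdot 4\cdot\sqrt{\pi\cdot(\dots)}$ to be well below $1/4$. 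Small $z$, up to a threshold around $400$, I would check exactly with the finite certificate, as was done in Lemma~\ref{cub:lem:p}.

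For the lower bounds with $z\ge400$, the sum is restricted to a central window $|k-z/2|\le w$, with $w$ of order $z^{3/4}$, so that both $k$ and $z-k$ are at least $100$ and Lemma~\ref{cub:lem:p} applies. On this window $e^{\mu(k)+\mu(z-k)}$ is bounded below using $\sqrt{k-1/24}\ge\sqrt k-1/(48\sqrt k)$ and the quadratic lower bound
\[
 \sqrt{z/2+s}+\sqrt{z/2-s}\ge\sqrt{2z}-\frac{C s^2}{z^{3/2}},
\]
which holds for $|s|\le z/4$ via a Taylor estimate with explicit remainder. The factors $k(z-k)\le z^2/4$ are bounded directly. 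The lattice sum of the resulting Gaussian is compared with its integral, which costs one endpoint value for a unimodal summand, and this gives $Z_z>e^{a_P\sqrt z}/(12z^{5/4})$. For $R_z$, the same windowed sum with $p(k)\Delta p(z-k)$ and the second bound of Lemma~\ref{cub:lem:p} loses an extra factor $(z/2)^{-1/2}$. This explains the exponent $7/4$, and the constant moves from about $1/(73/10\cdot13/2)\cdot 4\cdot2^{1/2}\cdot(\text{Gaussian mass})$ to the required $1/8$. Positivity of all terms, including $\Delta p\ge0$, justifies discarding the terms outside the window.

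The main obstacle is keeping the explicit constants honest at the smallest admissible $z$, namely $z=400$, with windows satisfying $k\ge100$. There the Gaussian width $z^{3/4}\approx90$ is comparable to the allowed window $z/2-100=100$. The correction terms $1/\mu$ in the $\mu(k)$ shift and the sum-to-integral error are also not small. My first attempt would be to choose the window half-width as $\min(z/4,\,z^{3/4})$, computing the truncated Gaussian mass explicitly with a rational lower bound. If the margin at $z=400$ proves too thin, I would extend the exact certificate range for $Z_z$ and $R_z$, computed from the exact partition numbers, up to a larger threshold such as $z\le2000$. The asymptotic argument would then be run only above that threshold, where the error terms are comfortably small.
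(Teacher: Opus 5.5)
Your proposal is correct and follows essentially the paper's route: for the lower bounds, a window $|k-z/2|\le z^{3/4}$ keeping both indices above $100$, the bounds of Lemma~\ref{cub:lem:p}, a quadratic concavity loss, $k(z-k)\le z^2/4$, and a unimodal lattice-sum comparison; for the upper bound, concavity, a Gaussian central sum, crude outer and endpoint terms, and an exact certificate for small $z$ (the paper runs the analytic part only for $z\ge1000$ and certifies $1\le z<1000$). Two minor repairs are needed: your inequality $\sqrt{k-1/24}\ge\sqrt k-1/(48\sqrt k)$ is false, since concavity gives the reverse inequality (use $1/(47\sqrt k)$ instead, at negligible cost), and for $R_z$ you should bound $k(z-k)^{3/2}\le(3/16)z^{5/2}$ rather than by $(z/2)^{5/2}$, because $z-k$ exceeds $z/2$ on half the window; this still leaves the prefactor above $1/8$.
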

\begin{proof}
For the lower bounds take $h=z^{3/4}$ and $|k-z/2|\le h$.
Both indices exceed 100 at and above 400: the lower endpoint
$z/2-z^{3/4}-1/24$ is increasing there and is greater than 100 at 400.
Write $v=(k-z/2)/h$, $w=z/2-1/24$, $\eta=(k-z/2)/w$.
Then $|\eta|<1/2$ and, with $S=\sqrt{1+\eta}+\sqrt{1-\eta}$,
\[
 2-S=\frac{2\eta^2}{(1+\sqrt{1-\eta^2})(2+S)}.
\]
The denominator exceeds $(1+433/500)(2+193/100)$.
The shift loss from $z$ to $z-1/12$ is below $1/100$.
Using $a_P<363/100$ and $(1-1/4800)^{-3/2}<(4800/4799)^2$ gives
\[
 \mu(k)+\mu(z-k)>a_P\sqrt z-1/100-2v^2.
\]
For a bounded nonnegative unimodal function on a finite interval,
a shifted lattice sum is at least its integral divided by the mesh minus
four times its supremum. This follows by using centred mesh cells, bounding the two boundary gaps and the total variation.
Consequently the sum of $e^{-2v^2}$ over this window is at least
\[
 z^{3/4}\int_{-1}^{1}e^{-2v^2}dv-4
 >z^{3/4}\{119/100-4/89\}.
\]
The integral lower bound follows by integrating an odd Taylor prefix;
$z^{3/4}>89$ at 400.
For the difference denominator,
$k(z-k)^{3/2}\le(3/16)z^{5/2}$: its maximum is at $k=2z/5$ and
$(2/5)^2(3/5)^3<(3/16)^2$.
The resulting $R$ prefactor is
\[
 \frac{(99/100)(119/100-4/89)}{(73/10)(13/2)(3/16)}>1/8.
\]
For $Z$ use $k(z-k)\le z^2/4$; the analogous prefactor exceeds $1/12$.

For completeness, the global upper bound is the unchanged coefficient
lemma. For $z\ge1000$, concavity gives
\[
 b_P(\sqrt k+\sqrt{z-k})\le a_P\sqrt z-
 \frac{a_P}{2z^{3/2}}(k-z/2)^2.
\]
In $z/6\le k\le5z/6$ the denominator $48k(z-k)$ is at least
$(20/3)z^2$. The Gaussian lattice bound gives the normalised prefactor
$(3/20)(3^{1/4}+z^{-3/4})<101/500$.
Outside this interval the exponent gap exceeds $(23/100)\sqrt z$;
the harmonic sum gives a prefactor below $3/1000$.
The two endpoints contribute below $1/1000$, since $a_P-b_P>1$.
These envelopes decrease above 1000. Converting to $(z+1)^{5/4}$ multiplies
by less than $1003/1000$, leaving a total below $1/4$.
The certificate rechecks the exact coefficients for $1\le z<1000$ with
the smaller exponential rate $181/50$.
\end{proof}

\section{Poisson majorants and a larger positive theta rectangle}
The cubic theta identity \cite[Section 3]{WangKrattenthaler} is
\[
 P_\infty(q)^3=\mathcal A(Q)-q\mathcal B(Q),\quad Q=q^3,
 \quad\mathcal A=Z\Theta_A,\quad\mathcal B=Z\Theta_B,
\]
\[
 \Theta_A=\sum_{u,v\in\mathbb Z}Q^{u^2+uv+v^2},\qquad
 \Theta_B=\sum_{u,v\in\mathbb Z}Q^{u^2+uv+v^2+u+v}.
\]
\begin{lemma}\label{cub:lem:AB}
For all integers $m\ge1$,
\begin{equation}\label{cub:eq:ABupper}
 \mathcal A_m,\mathcal B_m<\frac35
 \frac{e^{a_P\sqrt m}}{(m+1)^{3/4}}.
\end{equation}
For all integers $m\ge500$,
\begin{equation}\label{cub:eq:ABlower}
 \mathcal A_m,\mathcal B_m>\frac{e^{a_P\sqrt m}}{150m^{3/4}}.
\end{equation}
Also $\mathcal A_0=1$, $\mathcal B_0=3$.
\end{lemma}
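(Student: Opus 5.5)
Since $\mathcal A=Z\Theta_A$ and $\mathcal B=Z\Theta_B$, we have $\mathcal A_m=\sum_{j\ge0}r_A(j)Z_{m-j}$ and $\mathcal B_m=\sum_{j\ge0}r_B(j)Z_{m-j}$. Here $r_A(j)$ and $r_B(j)$ count representations of $j$ by $u^2+uv+v^2$ and by $u^2+uv+v^2+u+v$. The values $\mathcal A_0=1$ and $\mathcal B_0=3$ are immediate: $Z_0=1$, $r_A(0)=1$, and $r_B(0)=3$, from $(u,v)=(0,0),(-1,0),(0,-1)$. The plan is to insert the coefficient bounds of Lemma~\ref{cub:lem:RZ} into these convolutions, with the theta coefficients controlled on average rather than pointwise.

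\emph{Upper bound.} By \eqref{cub:eq:Zupper},
\[
\mathcal A_m\le\frac{e^{a_P\sqrt m}}{4}\sum_j r_A(j)\frac{e^{a_P(\sqrt{m-j}-\sqrt m)}}{(m-j+1)^{5/4}}.
\]
Using $\sqrt{m-j}\le\sqrt m-j/(2\sqrt m)$, each term is at most $e^{-a_Pj/(2\sqrt m)}(m-j+1)^{-5/4}$. The theta sum is treated by a Poisson-type majorant. Summing over lattice points with weight $e^{-sQ(u,v)}$, $s=a_P/(2\sqrt m)$, gives the area $2\pi/(\sqrt3 s)=a_P/s=2\sqrt m$, plus a boundary correction of order $O(1/\sqrt s)$. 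Elementary bounds for sums of $e^{-s(u^2+uv+v^2)}$ row by row make this explicit.

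The factor $(m-j+1)^{-5/4}$ is handled by splitting the sum at $j\le m/2$ and $j>m/2$. On the first range it is at most $2^{5/4}(m+1)^{-5/4}$, and near $j\approx s^{-1}\sim\sqrt m$ it is essentially $(m+1)^{-5/4}$. On the second range the exponential factor $e^{-a_P\sqrt m/4}$ kills the contribution. This yields $\mathcal A_m\lesssim\frac14\cdot2\sqrt m\,e^{a_P\sqrt m}(m+1)^{-5/4}(1+o(1))=\frac12e^{a_P\sqrt m}(m+1)^{-3/4}$, and the constant $3/5$ leaves a margin of $1/10$ for the boundary corrections. For $\Theta_B$ the form $u^2+uv+v^2+u+v$ is $Q(u+1/3,v+1/3)-1/3$, so the same area estimate applies up to a factor $e^{s/3}$. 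For small $m$, below an explicit threshold such as $m<1000$, the inequality is checked by exact coefficients, as in the earlier certificates.

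\emph{Lower bound.} For $m\ge500$, only the terms with $j\le J=\lfloor\sqrt m\rfloor$ are kept, and there are enough of them to use $Z_{m-j}>e^{a_P\sqrt{m-j}}/(12(m-j)^{5/4})$ from Lemma~\ref{cub:lem:RZ}. This requires $m-j\ge400$, which holds. With $\sqrt{m-j}\ge\sqrt m-j/(2\sqrt{m-J})-\cdots$, the exponential loss is at most $e^{-a_P/2-\epsilon}$ uniformly for $j\le J$. The lattice count then gives $\sum_{j\le J}r_A(j)\ge\#\{Q(u,v)\le J\}\ge(2\pi/\sqrt3)J-O(\sqrt J)$, and similarly for $r_B$. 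This gives a lower bound of roughly
\[
\frac{e^{a_P\sqrt m}}{12m^{5/4}}\,e^{-a_P/2}\,a_P\sqrt m\,c
\]
with an explicit $c$. Since $e^{-363/200}\cdot\frac{181}{50}/12\approx0.049\gg1/150$, there is ample room. A cleaner alternative is to weight each term by its true exponential factor instead of bounding it by the worst case.

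The main obstacle will be the upper bound's uniform constant near the threshold, where the lattice boundary term $O(1/\sqrt s)\sim m^{1/4}$ relative to $\sqrt m$ is not yet small. A rigorous explicit majorant for $\sum r(j)e^{-sj}$ is needed. The first attempt would be the one-dimensional bound $\sum_u e^{-s(u+x)^2}\le\sqrt{\pi/s}+1$ applied after completing the square in $u$, combined with an exact certificate check for $m$ below the point where the asymptotic margin takes over.
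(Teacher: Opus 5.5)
Your architecture matches the paper's. You insert \eqref{cub:eq:Zupper} into the convolution with the theta coefficients and linearise $\sqrt{m-j}\le\sqrt m-j/(2\sqrt m)$. You majorise the resulting Gaussian lattice sum by its area $2\sqrt m$, with the shift by $1/3$ and the factor $e^{s/3}$ for $\Theta_B$. You discard large degrees by the exponential gap and certify small $m$ exactly. For the lower bound you keep a finite block of small-degree lattice points and apply the $Z$ lower bound of Lemma~\ref{cub:lem:RZ}. That lower bound is sound. Counting the ellipse $u^2+uv+v^2\le\lfloor\sqrt m\rfloor$ leaves much more room than the paper's square $|u|,|v|\le\lfloor m^{1/4}/2\rfloor$. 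There the count $(2L+1)^2>(16/25)\sqrt m$ and exponent loss below $2$ land exactly on $(16/25)/(12\cdot 8)=1/150$. The price is that you need an explicit lattice-point lower bound for every $J\ge22$, and also for the shifted form in $\Theta_B$.

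As written, however, two steps of your upper bound do not reach $3/5$.

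\emph{The factor $(m-j+1)^{-5/4}$.} Bounding it by $2^{5/4}(m+1)^{-5/4}$ throughout $j\le m/2$ multiplies the main term by $2^{5/4}$. That gives the constant $2^{1/4}\approx1.19$ instead of $1/2$. Your remark that the factor is ``essentially'' $(m+1)^{-5/4}$ where the mass sits has to be made quantitative. The paper absorbs $(1-N/m)^{-5/4}\le e^{15N/(8m)}$, valid for $N\le m/3$, into the Gaussian rate. This replaces $s$ by $\alpha'=a_P/(2\sqrt m)-15/(8m)$, at a cost of only a factor $500/483$.

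\emph{The theta majorant.} The boundary correction of order $1/\sqrt s$ you anticipate comes from the integral-plus-maximum row bound $\sqrt{\pi/s}+1$. With that bound, the two-dimensional sum exceeds the area by a relative amount of roughly $1.05\sqrt s$. At $m=1000$ this is about a quarter, which already exceeds the 20\% gap between $1/2$ and $3/5$. You would then have to certify exactly up to $m$ of several thousand. A complete Gaussian lattice sum has no such boundary term. Poisson summation gives $\sum_k e^{-a(k+\delta)^2}\le\sqrt{\pi/a}\,(1+3e^{-\pi^2/a})$ uniformly in the shift $\delta$. Applying it to $(u+v/2)^2+3v^2/4$ makes the correction exponentially small, which is what lets the paper stop at $1000$.

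\emph{A minor point.} \eqref{cub:eq:Zupper} is false at $z=0$, since $Z_0=1$ exceeds the right-hand side $1/4$. The term $j=m$ in your displayed convolution bound therefore needs the trivial estimate $Z_0\le1$. It lies in the negligible range, so nothing else changes.
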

\begin{proof}
For the upper bound take $m\ge1000$ and separate degrees $N\le m/3$.
Since $-(5/4)\log(1-N/m)\le(15/8)N/m$,~\eqref{cub:eq:Zupper} bounds
this part by
\[
 \tfrac14 e^{a_P\sqrt m}m^{-5/4}\Theta(e^{-\alpha'}),\qquad
 \alpha'=a_P/(2\sqrt m)-15/(8m)>0.
\]
Poisson summation of a shifted one-dimensional Gaussian gives
\[
 \sum_{k\in\mathbb Z}e^{-a(k+\delta)^2}
 \le\sqrt{\pi/a}\{1+3e^{-\pi^2/a}\}\quad(e^{-\pi^2/a}<1/3).
\]
Indeed its Fourier coefficients are
$\sqrt{\pi/a}e^{-\pi^2\ell^2/a}e^{2\pi\ii\ell\delta}$;
bounding the nonzero frequencies by a geometric series gives the displayed
bound. Complete $N=(u+v/2)^2+3v^2/4$.
For $\Theta_B$ shift both lattice coordinates by $1/3$, introducing
$e^{\alpha'/3}$.
Here $\alpha'<1/15$, the two Poisson correction factors multiply to less
than $1001/1000$, and
\[
 \Theta_A(e^{-\alpha'})<\frac{2\pi}{\sqrt3\alpha'}\frac{1001}{1000},
 \quad e^{\alpha'/3}<41/40,
 \quad\frac{15}{4a_P\sqrt m}<17/500.
\]
Thus the central prefactor is below
$\tfrac12(500/483)(1001/1000)(41/40)$.
There are fewer than $7m$ eligible points in the other group;
$Z_{m-N}\le e^{a_P\sqrt{m-N}}$ also holds at zero.
The gap $a_P(1-\sqrt{2/3})>13/20$ bounds their normalised contribution by
$7m^{7/4}e^{-(13/20)\sqrt m}<1/100$.
Conversion to $(m+1)^{3/4}$ uses $1001/1000$.
The result is below $3/5$. For $1\le m<1000$ the certificate recomputes
the exact $Z\Theta$ convolutions and uses the lower rate $18/5$.

For the lower bound choose $L=\lfloor m^{1/4}/2\rfloor$ and
$-L\le u,v\le L$. Count $(2L+1)^2>(16/25)\sqrt m$;
the maximum degree $M=3L^2+2L$ satisfies
$M<(19/20)\sqrt m$, $m-M\ge400$ and
$\sqrt{m-M}>(49/50)\sqrt m$.
For $L=2$ use $m\ge500$, $M=16$; for $L\ge3$ use $m\ge16L^4$.
The exponent loss is below 2, and $e^2<8$.
Apply the new $Z$ lower bound: $(16/25)/(12\cdot8)=1/150$.
\end{proof}

Write $N=u^2+uv+v^2$ and $a=u+v$. The exact positive pairing is
\begin{equation}\label{cub:eq:positive-pairing}
 \mathcal C=\frac{\mathcal B-\mathcal A}{1-Q}
 =\mathcal R\sum_{\substack{u,v\in\mathbb Z\\a=u+v>0}}
 Q^{N-a}(1+Q+\cdots+Q^{a-1})^2.
\end{equation}
Indeed, the points with $a=0$ cancel in $\Theta_B-\Theta_A$.
Pair $(u,v)$, with $a>0$, with $(-u,-v)$; their combined difference is
$Q^N(Q^a+Q^{-a}-2)=Q^{N-a}(1-Q^a)^2$.
Here $N-a\ge0$: for $a=1$ its minimum is zero, and for $a\ge2$ one has
$N-a\ge3a^2/4-a\ge0$. Multiplication by
$Z/(1-Q)$ leaves $(1-Q)Z=\mathcal R$ after extracting
$(1-Q^a)^2=(1-Q)^2(1+\cdots+Q^{a-1})^2$.
This proves \eqref{cub:eq:positive-pairing} as a formal identity.
In particular $\mathcal C\succeq2\mathcal R$, from the two points
$(1,0)$ and $(0,1)$.

\begin{lemma}\label{cub:lem:C}
For all integers $s\ge500$,
\begin{equation}\label{cub:eq:C100}
 \mathcal C_s>\frac{e^{a_P\sqrt s}}{100s^{3/4}}.
\end{equation}
\end{lemma}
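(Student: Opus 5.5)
We plan to obtain the lower bound \eqref{cub:eq:C100} from the positive pairing \eqref{cub:eq:positive-pairing}, following the small-lattice-rectangle argument used for \eqref{cub:eq:ABlower}. We will not stop at the trivial bound $\mathcal C\succeq2\mathcal R$. That bound only gives $\mathcal C_s>e^{a_P\sqrt s}/(4s^{7/4})$ by Lemma~\ref{cub:lem:RZ}, which is a full power $s$ short of the target. The missing factor $s$ has to come from summing many lattice terms. Every summand in \eqref{cub:eq:positive-pairing} is a nonnegative series. Its constant coefficient is $1$, and the coefficient of $Q^{N-a+j}$ in $(1+\cdots+Q^{a-1})^2$ is $\min(j+1,2a-1-j)$.

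First, for $L=\lfloor s^{1/4}/2\rfloor$ (or a slightly larger constant multiple, to be tuned), we restrict to lattice points $(u,v)$ with $|u|,|v|\le L$ and $a=u+v>0$. We also keep all $2a-1$ terms of the square, whose total weight is $a^2$. Since all coefficients are nonnegative, this gives
\[
 \mathcal C_s\ge\sum_{\substack{|u|,|v|\le L\\a>0}}\ \sum_{j=0}^{2a-2}\min(j+1,2a-1-j)\,R_{s-N+a-j}.
\]
Second, every shift $N-a+j$ here is at most $M'=3L^2+2L$, so $s-M'\ge400$ for $s\ge500$. Because $R_z$ is nondecreasing, each $R$ in the sum is at least $R_{s-M'}$. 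Lemma~\ref{cub:lem:RZ} then gives $R_{s-M'}>e^{a_P\sqrt{s-M'}}/(8(s-M')^{7/4})$. As in the proof of Lemma~\ref{cub:lem:AB}, we have $\sqrt{s-M'}>\sqrt s-M'/\sqrt s$, so the exponential loss is at most $e^2<8$.

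Third, we count the total weight $\sum_{a>0}a^2$ over the rectangle. It is of order $L^4$, since $a$ ranges up to $2L$ and there are about $2L^2$ points with $a>0$. Because $L\approx s^{1/4}/2$, this gives order $s$, which is exactly the missing power. Explicitly, the sum equals half of $\sum_{u,v}(u+v)^2=2(2L+1)\cdot L(L+1)(2L+1)/3$, so it is $(2L+1)^2L(L+1)/3$. This is comparable to $s/12$ when $L\approx s^{1/4}/2$. Combined with $1/(8\cdot8)$ it yields roughly $e^{a_P\sqrt s}/(768\,s^{3/4})$, which is too weak by a factor near $8$.

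The main obstacle is therefore the constant. To reach $1/100$ we intend to enlarge the rectangle to $L\approx\lambda s^{1/4}$, and to replace the crude bound $R_{s-M'}$ by $R_{s-N+a-j}$ with its individual exponential loss $e^{-a_P(N-a+j)/(2\sqrt s)}$. The weight then becomes a discretized Gaussian sum $\sum a^2e^{-a_P N/(2\sqrt s)}$ over the full lattice. Its main term is about $\sqrt{\pi}$-type integrals, giving an order-$s$ contribution with a constant of roughly $(2\sqrt s/a_P)^2\cdot 2\pi/(\sqrt3)\cdot(\text{second moment})$, which comfortably exceeds the required $8\cdot100^{-1}$ ratio. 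We will make this rigorous by restricting to $N\le\sqrt s$, where the lattice-sum lower bound counts points minus boundary terms. We will use $(m-N')^{-7/4}\ge m^{-7/4}$ and check the resulting rational constant. This discrete Gaussian second-moment lower bound, uniform for $s\ge500$, is the step needing care. Small $s$ near $500$ may require a direct certificate check, as elsewhere in this part.
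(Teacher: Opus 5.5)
Your diagnosis of the trivial bound and of the first square is right. Your refined plan is in substance the paper's argument: keep whole positive blocks of \eqref{cub:eq:positive-pairing} over a lattice region of linear size $\asymp s^{1/4}$, weight each block by $a^2$, discount the bound of Lemma~\ref{cub:lem:RZ} by the block's degree, and show the resulting Gaussian second moment divided by $s$ exceeds $2/25$.

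As written, however, the proposal does not yet prove the lemma. The only chain you carry out gives about $1/768$, and the step that must produce $2/25$ is heuristic. Moreover, the one explicit estimate it rests on points the wrong way. Since $\sqrt s-\sqrt{s-m}=m/(\sqrt s+\sqrt{s-m})>m/(2\sqrt s)$, the factor $e^{-a_Pm/(2\sqrt s)}$ overstates $e^{a_P\sqrt{s-m}-a_P\sqrt s}$. It therefore cannot enter a lower bound for $R_{s-m}$.

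You need $\sqrt{s-m}\ge\theta\sqrt s$ on the whole region, and then the loss $a_Pm/((1+\theta)\sqrt s)$. The paper takes $\theta=9/10$. Because $R_z$ is nondecreasing, it bounds every term of a block by $R_{s-k}$, where $k=N+a-2$ is the block's top degree, so the discount is $e^{-2k/\sqrt s}$.

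The sharper form matters. With the inequality $\sqrt{s-m}>\sqrt s-m/\sqrt s$ from your second step, the asymptotic normalised sum is only $1/(2\sqrt3\,\pi)\approx0.092$, essentially at the threshold. That inequality also does not give the loss below $2$ claimed in your second step, since $a_PM'/\sqrt s$ can exceed $3$. Use $M'/(\sqrt s+\sqrt{s-M'})$ instead, as in Lemma~\ref{cub:lem:AB}.

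The other missing piece is uniformity in $s\ge500$, and here the paper's choice of region does real work.
\begin{itemize}
\item It takes $L=\lfloor s^{1/4}\rfloor\ge4$ and the rectangle $1\le a\le2L$, $|b|\le2L$ in the coordinates $a=u+v$, $b=u-v$, so $N=(3a^2+b^2)/4$. This region is about four times your first square in linear size, and on it $a^2e^{-2k/\sqrt s}$ factors into an $a$-part and a $b$-part.
\item For $4\le L\le39$ it replaces $1/s$ and $2/\sqrt s$ by their worst values on $L^4\le s<(L+1)^4$, using $s\ge500$ when $L=4$. Thus only 36 fixed rectangles are checked exactly, and they cover $500\le s<40^4$.
\item For $L\ge40$ it keeps $a,|b|\le L$ and applies two one-dimensional Riemann-sum bounds. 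This gives a normalised sum at least $e^{-2/L}(L/(L+1))^4(7/50-1/(2L))(17/20-4/L)>2/25$.
\end{itemize}

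Your ellipse $N\le\sqrt s$ would also work numerically: the normalised sum is about $0.22$ at $s=500$ and tends to about $0.2$. But you would still need a genuinely two-dimensional lattice-point error bound with an explicit threshold, plus a check of every $s$ below that threshold. The factorised rectangle, grouped by $L$, supplies both at once.
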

\begin{proof}
Use coordinates $a=u+v>0$, $b=u-v$, with $a\equiv b\pmod2$ and
$N=(3a^2+b^2)/4$. Put $L=\lfloor s^{1/4}\rfloor\ge4$, and take
$1\le a\le2L$, $|b|\le2L$ with the required parity.
The maximum degree of the positive block is
$k=N+a-2\le4L^2+2L-2=:M$ and its weight is $a^2$.
Throughout these blocks, $s-M\ge400$ and
$\sqrt{s-M}>(9/10)\sqrt s$.
For $L=4$ use $s\ge500$, $M=70$; for $L\ge5$,
$M/s\le4/25+2/125<19/100$.
The bound $s-M\ge400$ also follows from
$M\le4\sqrt s+2s^{1/4}-2$, an increasing deficit with value above 400
at 500. Thus the exponent loss for every block term is less than
$2k/\sqrt s$, and the new $R$ bound gives
\[
 \mathcal C_s>\frac{e^{a_P\sqrt s}}{8s^{3/4}}
 \frac1s\sum_{a,b}a^2e^{-2k/\sqrt s}.
\]
For $L=4$, use $s\le625$ and $2/\sqrt s<10/111$.
For $5\le L<40$, use $s<(L+1)^4$ and $2/\sqrt s\le2/L^2$.
The certificate verifies for each of these 36 fixed rectangles that
the resulting normalised sum is greater than $2/25$.

For $L\ge40$ it suffices to keep $1\le a\le L$, $|b|\le L$.
Writing $I_a=\int_0^1x^2e^{-3x^2/2}dx>7/50$ and
$I_b=\int_{-1}^1e^{-x^2/2}dx>17/10$, one has
\[
 \sum_{a=1}^La^2e^{-3a^2/(2L^2)}
 \ge L^3\{I_a-1/(2L)\},\quad
 \sum_{\substack{|b|\le L\\b\equiv a\ (2)}}e^{-b^2/(2L^2)}
 \ge L\{I_b/2-4/L\}.
\]
For the first inequality the right Riemann sum error is bounded by total
variation; $x^2e^{-3x^2/2}$ has supremum below $1/4$.
For the second use the shifted mesh-window bound already stated.
Dropping the helpful $-2$ in $k$ gives a normalised weight at least
\[
 e^{-2/L}\left(\frac{L}{L+1}\right)^4
 (7/50-1/(2L))(17/20-4/L).
\]
Every factor increases for $L\ge40$. At 40 use $e^{-1/20}>19/20$;
the rational result is greater than $2/25$.
Hence the total $\mathcal C$ prefactor exceeds $(1/8)(2/25)=1/100$.
\end{proof}

\section{All higher tail layers in the \texorpdfstring{$30n$}{30n} boundary band}
Put $f(x)=e^{a_P\sqrt x}/(x+1)^{3/4}$. Its logarithm is concave for $x>0$.
For $j\ge4$ set
$\alpha=\sqrt{c_P/j}-3/[4(j+1)]>0$. The new majorant gives
\[
 \mathcal A_z,\mathcal B_z\le\tfrac35 f(j)e^{\alpha(z-j)}\quad(z\ge0).
\]
At $z=0$ the right side exceeds 3 already at $j=4$, and increases with $j$.
The exact tail decomposition is
\[
 P_n^3=P_\infty^3T_n^3,\quad
 T_n^3=1+\frac{3Q^n(q+q^2)}{1-Q}+\mathscr R_n(q),\quad
 \mathscr R_n\succeq0,\quad\operatorname{ord}\mathscr R_n\ge6n+2.
\]
At $q=e^{-\alpha/3}$ put $x=e^{-\alpha n}$ and $\log T_n^3=S+U$.
For all $u>0$,
\[
 \frac{e^{-u/3}+e^{-2u/3}}{1-e^{-u}}
 =\frac{\cosh(u/6)}{\sinh(u/2)}\le2/u,
\]
since $\sinh(3v)\ge3v\cosh v$ coefficientwise ($9^k\ge2k+1$).
Consequently
\[
 S\le6\alpha^{-1}x,\qquad
 U\le\frac32\frac{\alpha^{-1}x^2}{1-x}<2\alpha^{-1}x^2.
\]
For $n\ge500$ and $4\le j\le9n+4$,
$\alpha^{-1}<(21/10)\sqrt n$, $\alpha n>(49/100)\sqrt n$, $x<1/1000$.
Thus $L=S+U<(601/100)\alpha^{-1}x<1/100$.
Using $\mathscr R_n=U+(e^L-1-L)$ gives
\[
 \mathscr R_n<21\alpha^{-2}x^2\quad(j\ge4),\qquad
 \mathscr R_n<19\alpha^{-2}x^2\quad(j\ge100).
\]
For the latter use $\alpha<1/5$ and
$\alpha>(23/24)\sqrt{c_P/j}$; for the former use $\alpha<1$.
Indeed $2+(601/100)^2/[2(99/100)]<21$ and replacing 2 by $2/5$ gives 19.
Also $T_n^3-1<(61/10)\alpha^{-1}x$.

For $d=3m+2$, put $s=m-n$. The exact first layer is
\[
 b_d=-3\mathcal C_s+E_{n,d},\qquad E_{n,d}=[q^d]P_\infty^3\mathscr R_n.
\]
For $d\le3n$ the residue-two coefficient is zero, and for
$n\le m\le2n-1$ the remainder vanishes and the sign is strictly negative.
For $2n\le m\le10n-1$ choose $j=s-n+4$. Then $n\le s<9n$ and
\[
 |E|\le\tfrac35 f(j)e^{\alpha(d/3-j)}\mathscr R_n(e^{-\alpha/3}).
\]
The exponent satisfies $d/3-j-2n=-10/3$, so there is no saddle loss.
For $j\ge100$, the new fine tail bound implies
\begin{equation}\label{cub:eq:E4}
 |E|<4j^{1/4}e^{a_P\sqrt j},
 \qquad \frac35\,19\,(24/23)^2/c_P<4.
\end{equation}
With~\eqref{cub:eq:C100} this gives
\[
 \frac{|E|}{3\mathcal C_s}<1200n e^{-g_2\sqrt n}<3/4,
 \quad g_2=\frac{360}{583}\frac{124}{125}.
\]
Use $s^{3/4}j^{1/4}<9n$, $n-4\ge(124/125)n$,
$\sqrt s+\sqrt j<(583/100)\sqrt n$.
At 500, $\sqrt n>223/10$ and $e^{g_2(223/10)}>800000$;
the envelope decreases thereafter.
For $j<100$, the general tail gives $|E|<6j^{1/4}e^{a_P\sqrt j}$.
The ratio is below
$200(n+96)e^{-(18/5)(\sqrt n-10)}<1/1000$.

For residues zero and one, take $n\le m\le10n$, $j=m-n+4$.
The complete perturbation from $P_\infty^3$ is below
$(61/10)(3/5)\alpha^{-1}f(j)$, since its extra exponent is nonpositive.
Use~\eqref{cub:eq:ABlower}. For $j\ge100$,
\[
 \text{relative perturbation}<660n^{3/4}e^{-g_{01}\sqrt n}<1/5,
 \quad g_{01}=\frac{108}{185}\frac{124}{125}.
\]
Here $\alpha^{-1}<(3/5)\sqrt j$, $j^{-1/4}<1/3$,
$m^{3/4}<6n^{3/4}$, and
$\sqrt{10}+\sqrt{9+4/n}<37/6$.
At 500 use $n^{3/4}<110$, $\sqrt n>111/5$ and
$e^{g_{01}(111/5)}>363000$.
For $j<100$ the relative perturbation is below
$400(n+96)^{3/4}e^{-(18/5)(\sqrt n-10)}<1/1000$.
For $d\le3n$ the infinite identity is exact. This proves the complete
boundary band, including all three residues, for every $n\ge500$.

\section{Exact folded peaks for all three residues}
Define
\[
 D(x)=1+x+x^2,\quad A(z)=\int_0^1\log D(e^{-3zu})\,du,
 \quad\beta=-A',\quad V=A'',\quad c=\pi^2/27,
\]
\[
 C(z)=-\frac{z(1-e^{-6z})}{12D(e^{-3z})},\quad
 H(z)=\frac{\sqrt3}{\sqrt{D(e^{-3z})}},\quad\kappa=12c,
\]
\[
 \lambda_h(z)=\exp\!\left\{\sum_{s=1}^2(1/2-s/3)
 \log\frac{1-\omega^{-hs}e^{-3z}}{1-\omega^{-hs}}\right\}.
\]
The branches continue analytically from the positive real axis.
Folding after cubing gives, with $x=e^{-3z}$,
\[
 \Gamma_0=\sum\lambda_h^3=(1+x)H,\quad
 \Gamma_1=\sum\omega^{-h}\lambda_h^3=-H,\quad
 \Gamma_2=\sum\omega^{-2h}\lambda_h^3=-xH.
\]
For example
$\lambda_1^3=[\sqrt3(1+x)-\ii(1-x)]/(2\sqrt D)$, and the other term has
the opposite sign. The polynomial $3(1+x)^2+(1-x)^2=4D$ verifies the
normalisation.
Let $\ell_2=1$, $\ell_0=\ell_1=0$,
$a_0(z)=1+e^{-3z}$, $a_1=a_2=1$ and $\sigma_0=1$, $\sigma_1=\sigma_2=-1$.
For the degree-dependent saddle of the leading exponential,
with integer $d\equiv r\pmod3$, use
\[
 d-3n\ell_r=3n^2\beta(t),\quad r_n=e^{-t/n},\quad
 G=\frac{1-e^{-3t}}{1-e^{-3t/n}},\quad v_0=n\rho_0/G.
\]
The exact peak and its real signed main are
\[
 I_{\rm pk}=\frac{e^{dt/n}}{2\pi n}
 \int_{-v_0}^{v_0}e^{-\ii dv/n}
 \sum_{h=1}^2\omega^{-rh}P_n(\omega^he^{-(t-\ii v)/n})^3dv,
\]
\begin{equation}\label{cub:eq:I0}
 I_{0,r}=\sigma_r\frac{e^{3nA(t)+(d-3n\ell_r)t/n+3C(t)/n}
 a_r(t)H(t)}{n^{3/2}\sqrt{6\pi V(t)}}.
\end{equation}
With $\Phi=3n\{A(t-\ii v)-A(t)-\ii\beta(t)v\}$,
$g_r=H(t-\ii v)a_r(t-\ii v)e^{3(C(t-\ii v)-C(t))/n}/(H(t)a_r(t))$
and $J=\sqrt{2\pi/(3nV)}$, the exact relative peak is
$J^{-1}\int e^\Phi g_r(1+\eps_r)dv$. It is real by conjugate symmetry.

\section{Scaled derivatives and paired Gaussian moments}
For $X=e^{-3tu}$, the law $\Pr(J=j)=X^j/D(X)$ has variance
\[
 \nu(X)=\frac{X(1+4X+X^2)}{D(X)^2},\quad
 V=9\int_0^1u^2\nu(e^{-3tu})du,\quad \nu\le2/3.
\]
The identity $X\nu'=X(1-X^2)(1+7X+X^2)/D^3\ge0$ implies that $V$ decreases.
Write
\[
 A=c/z+R,\quad R=-\Li(e^{-3z})/(3z)+\Li(e^{-9z})/(9z).
\]
For $\Rea z=t$, every imaginary part satisfies
\[
 t^{j+1}|R^{(j)}(z)|\le j!\sum_{a=3,9}\frac1a
 \sum_{k\ge1}\frac{e^{-akt}}{k^2}
 \sum_{\ell=0}^j\frac{(akt)^\ell}{\ell!}.
\]
The Poisson partial sum decreases with $akt$. At $t=1$ use $e^{-3}<1/20$;
at $t=3/2$ use $e^{-9/2}<1/90$. Geometric power sums give
\[
\begin{array}{c|cccc}
 &t|R|&t^3|R''|&t^4|R'''|&t^5|R''''|\\ \hline
 t\ge1&9/500&3/10&71/50&15/2\\
 t\ge3/2&1/250&3/25&7/10&22/5
\end{array}
\]
where each entry is a strict upper bound. Using $73/200<c<11/30$ gives
\[
 V>43/100\ (0<t\le1),\quad V\le2,\quad t^3V<31/30\ (t\ge1),
\]
\[
 16/25<3t^3V/2<8/5,\quad t^4|A'''|<37/10,\quad
 t^5|A''''|<33/2\quad(t\ge1),
\]
\[
 9/10<3t^3V/2<13/10,\quad t^3V<43/50,\quad
 t^4|A'''|<3,\quad t^5|A''''|<14\quad(t\ge3/2).
\]
The compact variance floor follows at 1 from $2c-3/10>43/100$.

We use the following paired Gaussian bound. Suppose
$\Phi=-a\lambda w^2+\ii p+q$, with odd real cubic $p$,
$|p|\le\lambda M_3|w|^3/2$,
$|q|\le\lambda M_4w^4/8\le a\lambda w^2/2$, and
$g(0)=1$, $g'(0)$ imaginary, $|g|\le B$, $|g'|,|g''|\le M_g$ on a symmetric core.
The normalised core error is at most $K/\lambda$, where
\begin{equation}\label{cub:eq:Ksharp}
 K=\frac{3\sqrt2 BM_4}{8a^2}+\frac{15M_3^2}{64a^3}
 +\frac{3M_gM_3}{8a^2}+\frac{M_g}{4a}.
\end{equation}
Expand
\[
 e^{\ii p+q}g-1=e^{\ii p}g(e^q-1)
 +(e^{\ii p}-1-\ii p)+\ii p+(g-1)+(e^{\ii p}-1)(g-1).
\]
Pair $w,-w$ to cancel the odd cubic and linear amplitude, and integrate the
remaining fourth, sixth, fourth and second Gaussian moments.
Also, for $J=\sqrt{\pi/(a\lambda)}$,
\[
 J^{-1}\int_{|w|>h}Be^{-b\lambda w^2}dw
 \le B\sqrt{a/b}\,e^{-b\lambda h^2}.
\]

\section{Compact peak: retaining \texorpdfstring{$\nu\le2/3$}{nu <= 2/3}}
Use $0<t\le1$, $\rho_0=1/4$. Then
$1/4\le v_0<15/19<23/20$, $|z|<2$.
With $a_*=3tu$ and $\delta=3|v|u$, the actual arc gives
$\delta\le(3/4)(a_*+1)$.
For $\delta\le\pi$ the root-cosine argument bounds the positive part by
$2e^{-a_*}\cos(2\pi/3-\delta)\le(3/2)e^{-2\pi/9}<3/4$.
Indeed $\arctan(4/3)=\pi/4+\arctan(1/7)<93/100$ and
$1+(4/3)(93/100)-2\pi/3<3/20$.
The cosine maximum is bounded by $(3/5)e^{(4/3)\arctan(4/3)}$;
$e^{3/20}<5/4$ proves the claim. If $\delta>\pi$, the radial estimate
$a_*>4\pi/3-1>3$ is stronger. Thus the two root denominators exceed $1/2$
and $|D|>1/4$. The corresponding root-minus-one estimate is
$2e^{-a_*}(-\cos\delta)<19/100$, giving $|1+x|>9/10$ and $|1+x|>|x|$.
Consequently $|\Gamma_r|\ge|x|$ for every residue.

Retaining the third Bernoulli term in finite shifted summation gives
\[
 P_n(\omega^he^{-z/n})=e^{nA}\lambda_he^{C/n+J_h/n^2+K_h},
\]
\[
 J_h=\frac{z^2}{18}\left\{
 \frac{\omega^{-h}x}{(1-\omega^{-h}x)^2}
 -\frac{\omega^{-2h}x}{(1-\omega^{-2h}x)^2}\right\},\quad J_2=-J_1.
\]
For each logarithm the remaining error is bounded by
$(|f'''(0)|+|f'''(1)|+\int|f''''|)/(24n^3)$.
The bounds $|f'''|\le3456$, $|f''''|\le124416$ give
$|K_h|\le10944/n^3$. The explicit difference in $J_1$ is
$-\ii\sqrt3x(1-x^2)/D(x)^2$, so $|J_h|<13|x|$.
Folding after cubing cancels this $|x|$ with the residue-two denominator.
Use $|\lambda_h^3|<2$, $e^{3t}<21$ and
$|e^{3u}-1|\le(31/10)|u|$ to obtain
\[
 \eps_W:=180/n^2+3\cdot10^6/n^3> |\eps_r|.
\]
On $|v|\le3/10$, $|z|<8/5$, the root denominators exceed $9/10$;
$\cos(2\pi/3-9/10)<2/5$ suffices. Thus
$|J_h|<|x|$, $|K_h|<460/n^3$, $|\lambda_h^3|<3/2$, and
\[
 \eps_N:=10/n^2+10^5/n^3>|\eps_r|.
\]
On that strip $\Rea D\ge1$ and the quotient bounds for
$b=(1-x^2)/D$ are $(2,24,558)$, giving $|C'|<7$, $|C''|<150$.
The estimate $|D(Xe^{\ii\phi})|\ge D(X)\cos\phi$, with
$\cos(9/10)>61/100$, gives $|g_r|<13/10$ on the near strip and
$|g_r|<4$ on the whole compact arc. The extra residue-zero factor
$(1+x)/(1+X)$ has modulus at most one.

From the exact identity
\[
 D(X)^2-|D(Xe^{\ii\phi})|^2
 =2(X+X^3)(1-\cos\phi)+2X^2(1-\cos2\phi)
\]
follow the same three characteristic envelopes, now in the smaller domain:
\[
 |e^\Phi|\le
 \begin{cases}
 e^{-(111/100)nVv^2},&|v|\le3/10,\\
 e^{-(33/50)nVv^2},&|v|\le3/5,\\
 e^{-(1/9)nVv^2},&|v|\le v_0.
 \end{cases}
\]
For the first two use respectively the sine-series coefficients
$37/100$ and $(37/100,29/50)$.
For the last restrict $u\le2/3$ so $|\phi|\le23/10$; the coefficients
$(3/10,1/5)$ give variance coefficient $2/15$ and restricted variance
at least $(8/27)V$. The resulting exponent $16/135$ exceeds $1/9$.

Take core $h=3/20$, tilt denominator $m_*=9/10$, displacement bound
$\delta_*=1/2$. Keeping every factor of the real variance $\nu\le2/3$
in the tilted central cumulants yields
\[
 |A'''|<(21/2)V,\quad |A''''|<104V.
\]
The relevant constants are
\[
 3\{2/m_*+3\delta_*(2/3)/m_*+2\delta_*^3(2/3)^2\}<21/2,
\]
\[
 9\{4/m_*+8\delta_*(2/3)/m_*+3(2/3)/m_*^2
 +12\delta_*^2(2/3)^2/m_*+6\delta_*^4(2/3)^3\}<104.
\]
These follow by expanding the third and fourth central moments after the
complex mean shift, whose size is at most $\delta_*\nu$.
For $H$ alone the first two logarithmic derivative bounds are
$(3/2)(1+\delta_*\nu)$ and $(9/2)(\nu/m_*+\delta_*^2\nu^2)$.
Including $C$ gives $|\partial_v\log g_r|<41/20$ and
$|\partial_v^2\log g_r|<19/4$. With $|g_r|<11/10$, this gives
$|g'_r|,|g''_r|<10$.
For residue zero the additional cancellation is
\[
 (\log\{(1+x)H\})'=-\frac{3x(1-x)}{2D(x)(1+x)},
\]
\[
 (\log\{(1+x)H\})''=
 \frac{9x(1-2x-4x^2-2x^3+x^4)}{2D(x)^2(1+x)^2}.
\]
The modulus of the second numerator polynomial is at most $D(X)^2+X^2$.
Using $|D|\ge(9/10)D(X)$, $|1+x|\ge(97/100)(1+X)$,
$X/D(X)\le1/3$ and $X/(1+X)^2\le1/4$ bounds it by
$(5/4)/[(9/10)^2(97/100)^2]<5/3$; its first derivative is below $3/5$.
These are within the uniform bounds above.

In~\eqref{cub:eq:Ksharp} take $a=3V/2$, $\lambda=n$,
$B=11/10$, $M_3=(21/2)V$, $M_4=104V$, $M_g=10$.
With $\sqrt2<99/70$ and $V>43/100$, the result is $K<130$;
absorption holds since $104h^2/8<3/4$.
Separating the near and two outer strips gives
\begin{align}\label{cub:eq:compactbudget}
 |I_{\rm pk}/I_{0,r}-1|<&130/n
 +\tfrac85e^{-(111/100)(43/100)h^2n}
 +e^{-(3/2)(43/100)h^2n}+\tfrac85\eps_N\notag\\
 &+\tfrac{31}5(1+\eps_W)e^{-(33/50)(43/100)(3/10)^2n}
 +15(1+\eps_W)e^{-(1/9)(43/100)(3/5)^2n}<3/10.
\end{align}
The exact certificate evaluates all terms at 500, and each decreases above it.

\section{Wider intermediate and large peaks}
Throughout this section use $\rho_0=1$, $\Lambda=n/t$, $y=v/t$.
For $1\le t\le3/2$, $\Lambda\ge1000/3$, and the larger domain satisfies
\[
 1<y_0<60/19<16/5,\qquad |z|/t<17/5.
\]
On $|y|\le1/2$, the $R''$ bound gives
$|e^\Phi|\le e^{-(2/5)\Lambda y^2}$, since
$3\{c/(5/4)-3/20\}>2/5$.
On the remaining sector, use the absolute $R$ bound instead of a quadratic
Taylor remainder. For $|y|\ge1/2$,
\[
 \Rea\Phi\le-3\Lambda\{cy^2/(1+y^2)-2(9/500)\}<-\Lambda/10.
\]
This uniform bound permits the wider cutoff.

Here $X=e^{-3t}<1/20$, $tX,t^2X<1/20$.
Writing $F=H(t-\ii ty)/H(t)$, quotient differentiation gives
\[
 |D|>47/50,\ D(t)<53/50,\ |D_y|<1/6,\ |D_{yy}|<11/20,
\]
\[
 (|F|,|F'|,|F''|)<(11/10,1/10,2/5),
 \qquad(|b|,|b_y|,|b_{yy}|)<(11/10,21/100,5/6).
\]
Thus $|C|<t/3$, $|C_y|<4t/25$, $|C_{yy}|<7t/25$.
For $E=3(C(t-\ii ty)-C(t))/n$ use
\[
 |E|<5/(4\Lambda),\quad |E'|<12/(25\Lambda),\quad
 |E''|<21/(25\Lambda).
\]
The extra residue-zero factor has modulus at most one and derivative
bounds $3/20$, $9/20$. Product differentiation gives, for all residues,
\[
 |\widehat g_r|<6/5,\qquad |\widehat g'_r|,|\widehat g''_r|<1.
\]
The infinite-tail shifted summation remainder is
\[
 |R_h|\le(25/2)X|z|^3/n^3+4u_*,\quad
 |J_h|\le X|z|^2/8,\quad u_*=e^{-\kappa n\Rea(1/z)}.
\]
Use $\sum\ell^2X^\ell<(5/4)X$ and
$2\cdot27(5/4)(1+17/5)/24<25/2$.
For the saddle cap proved below, $3t/\Lambda<300/2401$.
Thus $u_*/X\le e^{-\Lambda/4}$, since
$\kappa/(1+y^2)-3t/\Lambda\ge(219/50)/(1+(16/5)^2)-300/2401>1/4$.
The cubic logarithmic perturbation is at most
\[
 \frac X{\Lambda^2}
 \{3(17/5)^2/8+(75/2)(17/5)^3/\Lambda\}+12u_*.
\]
The bracket is below 9. With $|\lambda_h^3|<11/10$, $|\Gamma_r|\ge X$,
folding gives
\[
 |\eps_r|<20/\Lambda^2+30e^{-\Lambda/4}.
\]
All remainders are small enough for the exponential linearisation.
Use core $|y|\le1/5$, $a=3t^3V/2\in(16/25,8/5)$,
$M_3=37/10$, $M_4=33/2$, $B=6/5$, $M_g=1$.
The paired moment bound gives $K<45$. The near absolute normalisation
is below $5/2$, the far finite strip below $5\sqrt\Lambda e^{-\Lambda/10}$,
and the full model integral below 3. Hence
\begin{equation}\label{cub:eq:midbudget}
 |I_{\rm pk}/I_{0,r}-1|<45/\Lambda
 +\tfrac52e^{-2\Lambda/125}+e^{-16\Lambda/625}
 +5\sqrt\Lambda e^{-\Lambda/10}
 +60/\Lambda^2+90e^{-\Lambda/4}<4/25.
\end{equation}
The certificate evaluates this decreasing envelope already at $\Lambda=333$.

For $t\ge3/2$ the cap gives $\Lambda>109$ and
\[
 1<y_0<270/89<31/10,\qquad |z|/t<33/10.
\]
The new absolute-remainder method gives
\[
 |e^\Phi|\le e^{-(2/3)\Lambda y^2}\quad(|y|\le1/2),
 \qquad |e^\Phi|<e^{-19\Lambda/100}\quad(|y|\ge1/2),
\]
since $3\{c/(5/4)-3/50\}>2/3$ and
$3\{c/5-2/250\}>19/100$.
The quotient bounds, with $X<1/90$, are
\[
 |D|>49/50,\quad D(t)<51/50,\quad
 |D_y|<13/250,\quad |D_{yy}|<6/25,
\]
\[
 (|F|,|F'|,|F''|)<(21/20,1/25,7/50),\quad
 (|b|,|b_y|,|b_{yy}|)<(21/20,3/50,27/100).
\]
On the wider sector they imply
$|C|<3t/10$, $|C_y|<11t/100$, $|C_{yy}|<9t/100$.
Use $|E|<6/(5\Lambda)$, $|E'|<33/(100\Lambda)$,
$|E''|<27/(100\Lambda)$ and extra residue-zero derivative bounds
$1/20$, $9/40$. Their products still give
\[
 |\widehat g_r|<6/5,\quad |\widehat g'_r|,|\widehat g''_r|<1/2.
\]
The summation remainder now satisfies
$|R_h|\le11X|z|^3/n^3+4u_*$ and $|J_h|\le X|z|^2/8$.
Since
$(219/50)/(1+(31/10)^2)-300/2401>7/25$,
$u_*/X\le e^{-7\Lambda/25}$.
The cubic perturbation is at most
\[
 \frac X{\Lambda^2}
 \{3(33/10)^2/8+33(33/10)^3/\Lambda\}+12u_*.
\]
The bracket is below 17; folding yields
$|\eps_r|<40/\Lambda^2+60e^{-7\Lambda/25}$.
The scaled mesh $3|z|/n<1/10$ is inside the exact summation domain.
Take core $|y|\le1/5$, $a\in(9/10,13/10)$,
$M_3=3$, $M_4=14$, $B=6/5$, $M_g=1/2$, giving $K<15$.
The full absolute model integral is below 2 and its far finite strip below
$5\sqrt\Lambda e^{-19\Lambda/100}$. Thus
\begin{equation}\label{cub:eq:largebudget}
 |I_{\rm pk}/I_{0,r}-1|<15/\Lambda
 +\tfrac{17}{10}e^{-2\Lambda/75}+e^{-9\Lambda/250}
 +5\sqrt\Lambda e^{-19\Lambda/100}
 +80/\Lambda^2+120e^{-7\Lambda/25}<3/10.
\end{equation}
The exact envelope is checked at 109 and decreases thereafter.

\section{A sharper root-one inequality}
Put $q=r_ne^{\ii\theta}$, $u=|1-q|<1/3$ and $Y=r_n^{3n}=e^{-3t}$.
Let $k=\min(\lfloor1/(3u)\rfloor,n)$, as in the proof of WK Lemma 9.4.
If $k=n$, its factorial estimate (9.6) gives
$-\log|P_n(q)/P_n(r_n\omega)|>2n-(\log3)/2$,
more than the main term $M=(r_n+r_n^2)G/6\le n/3$ minus any stated loss.
For $k<n$, WK's exact decomposition (9.8) into $I_1,I_2,I_3,I_4$ gives,
with $S=r_n^{3k}$,
\[
 I_1\ge M,\quad
 I_2+I_3+I_4\ge\tfrac53k+k\log(1+\sqrt S+S)
 -\frac{2(S+Y)}{u|D(q)|}-\tfrac12\log3.
\]
Here we retain $Y$, rather than maximise it away: the negative $I_2$ term
is at least $-2(S-Y)/(u|D(q)|)$ and the remaining terminal term is at
least $-4Y/(u|D(q)|)$.
The elementary geometric bound used in WK is
$|D(q)|\ge d(u)=3-3u+u^2$. Since $ud(u)$ increases for $0<u<1/3$
and $u>1/[3(k+1)]$, with $h=k+1\ge2$,
\[
 \frac1{u|D(q)|}<\frac{27h^3}{27h^2-9h+1}\le k+11/8.
\]
For the last step the numerator difference is
$(9h^2-19h+3)/8=((h-2)(9h-1)+1)/8>0$.
Also $\log(1+\sqrt S+S)\ge S\log3$: use $\sqrt S\ge S$ and concavity.
It follows that
\[
 I_2+I_3+I_4\ge
 (\log3-1/3-2Y)k-11(1+Y)/4-(\log3)/2
 \ge(\log3)/2-37/12-19Y/4.
\]
The coefficient of $k$ is positive. Using $\log3>109/100$ proves
\begin{equation}\label{cub:eq:rootone}
 \left|\frac{P_n(q)}{P_n(r_n\omega)}\right|
 <\begin{cases}e^{14/5-M},&t\ge1,\\e^{13/5-M},&t\ge3/2.
 \end{cases}
\end{equation}
These are explicit replacements for the former loss 5.44.

\section{Complementary contour with angular regions separated}
Set
\[
 \phi(\rho)=\frac{r_n^3(1+r_n^3)}6
 \left(1-\frac1{\sqrt{1+18\rho^2}}\right)
 \frac{1-e^{-t/2}}{1-e^{-3t/n}}.
\]
The raw WK short-root angular bounds yield the integral
\[
 \frac4G\int_{\rho_0}^4
 \exp\!\left\{\frac{2.4}{\sqrt3-(1+3\rho)2/(3\sqrt n)}-3\phi(\rho)\right\}d\rho.
\]
All domain conditions are paid directly: $r_n>e^{-2/(3\sqrt n)}$,
$4/G<\pi/3$, $\rho_0<4$ and the denominator exceeds $5/4$ at and above 500.
Its exponential is below $e^{48/25}<7$.
The short contribution is therefore below
$105G^{-1}e^{-3\phi(1/4)}$ in the compact regime and
$84G^{-1}e^{-3\phi(1)}$ otherwise.
No monotonicity assertion requiring $n>546$ is used.
At the cap, $r_n^3>97/100$, so the first factor exceeds
$19109/60000$. We also retain
\[
 (1-e^{-3/\Lambda})^{-1}\ge\Lambda/3+1/2,
\]
from $\tanh(x/2)\le x/2$.
The real-radius shifted summation bounds give
$|P_n(r_n\omega)|^3<(101/100)e^{3nA(t)+3C(t)/n}$.
After normalising by~\eqref{cub:eq:I0}, the worst residue is bounded by
\[
 \mathcal S_2=\frac{101}{100}
 \frac{n^{3/2}\sqrt{6\pi V(t)}}{2\pi H(t)}e^{3t}.
\]
The cancellation penalty $e^{3t}$ is retained in every estimate.

For $t\le1$, $G\ge n/4$, $(1-e^{-t/2})/t>39/100$, and the cutoff
cosine exceeds $31/100$. Thus $3\phi(1/4)>19n/500$ and
$3\phi(4)>11n/100$. With $V\le2$, $e^3<21$ and
$e^{16.32}<1.3\cdot10^7$, the relative complement is at most
\[
 9000\sqrt n e^{-19n/500}+2\cdot10^9n^{3/2}e^{-11n/100}<3/1000.
\]
The first term is below $1/500$, the second below $1/1000$ at 500,
and both decrease. The root-one part here uses the original raw loss 5.44.

For $t\ge1$, $H>5/3$, $t^3V<31/30$ and
$G>(3/10)\Lambda$ give
$\mathcal S_2<(9/20)\Lambda^{3/2}e^{3t}$.
The short relative contribution is below
$126\sqrt\Lambda e^{3t-3\phi(1)}$.
Separate the remaining angular domain into $|1-q|\ge1/3$ and $|1-q|<1/3$.
In the first region WK Lemma 9.2 has logarithmic penalty at most 2.4.
Its relative integral is below
\[
 4000\Lambda^{3/2}e^{3t-3\phi(4)}.
\]
Indeed $(9/20)2\pi e^{7.2}<4000$.
The nearest third-root deviation is at most $\pi/3$.
Near $\omega,\omega^2$ the short arcs have already removed $\rho<4$.
If the nearest root is 1, then $|1-q|\ge1/3$ implies
$|\theta|\ge1/3-(1-r_n)>1/3-1/100$; since $G>(3/10)\Lambda>327/10$,
its scaled deviation also exceeds 4. Thus the use of $\phi(4)$ is valid.

For the root-one region, $r_n>99/100$ and
$2\sqrt{r_n}|\sin(\theta/2)|<1/3$ imply $|\theta|<7/20$,
using $\sin(7/40)>1/(6\cdot99/100)$.
The total angular length is below $7/10$. By~\eqref{cub:eq:rootone} its
relative integral is below
\[
 1500\Lambda^{3/2}e^{3t-3M},\quad
 3M\ge\frac{3743}{12000}(\Lambda+3/2),
\]
since $r_n+r_n^2>197/100$, $1-Y>19/20$ and $e^{42/5}<4500$.
Using the larger uniform loss 2.8 in this bound covers both scaled regimes.

Put $f_s=(19109/60000)(77/100)$,
$f_l=(19109/60000)(16/17)$ and $f_m=3743/12000$.
The certificate checks this finite split:
\begin{center}
\begin{tabular}{cccc}
\toprule
$t$ interval & lower $\Lambda$ & lower $1-e^{-t/2}$ & upper $3t$\\
\midrule
$[1,2]$ & $250$ & $39/100$ & $6$\\
$[2,3]$ & $166$ & $63/100$ & $9$\\
$[3,4]$ & $125$ & $77/100$ & $12$\\
$[4,17/4]$ & $117$ & $86/100$ & $51/4$\\
$[17/4,9/2]$ & $111$ & $88/100$ & $27/2$\\
$[9/2,\infty)$ & $109$ & $89/100$ & $(300/2401)\Lambda$\\
\bottomrule
\end{tabular}
\end{center}
For each row, the short, non-root-one far, and root-one terms are
respectively below $1/25$, $3/5$ and $1/500$.
For the first two use
$3\phi\ge f_{s,l}(\Lambda+3/2)(1-e^{-t/2})$;
for the third use $f_m(\Lambda+3/2)$.
Positive rational Taylor prefixes verify the corresponding exponential margins.
In the last row those margins exceed $10.49$, $15.85$, $20.84$;
the certificate uses the full rational values.
Every polynomial-exponential envelope decreases from its lower $\Lambda$.
Their sum $1/25+3/5+1/500<13/20$ bounds the scaled complement.

\section{Completion of the Cubic analytic proof}
Positive variance implies that $\beta$ decreases from $3/2$ to zero.
For $f(t)=\log D(e^{-3t})$, integration by parts gives
$\beta(t)=\{\int_0^tf-tf(t)\}/t^2<c/t^2$.
For residue two and $d\ge30n$,
\[
 t^2<\frac{3cn^2}{d-3n}\le\frac{cn}{9}<\frac{100n}{2401}.
\]
The unshifted residues zero and one have the stronger cap $t^2<cn/10$.
Therefore $\Lambda>(49/10)\sqrt n>109$ for every $n\ge500$.
All leading saddles exist with $t>0$ strictly below the lower-half endpoint;
the shifted residue-two saddle also stays positive at that endpoint.
For even $n$ the exact central degree is residue zero and has $t=0$.
Fix the residue-zero root weights and use the auxiliary arc integrals
with real degree parameter $d(t)=3n^2\beta(t)$. For noninteger $d(t)$
these auxiliary integrals need not be coefficient integrals; the preceding
local and absolute complementary bounds still hold. As $t\downarrow0$,
the radius tends to one, $d(t)$ tends to the central integer, and the
finite polynomial integrands converge continuously on their arcs.
The limiting sum is therefore exactly the central Cauchy coefficient.
All finite summation and actual-arc estimates are continuous in this limit.
The three combined errors satisfy
\[
 3/10+3/1000<1,\quad 4/25+13/20<1,\quad 3/10+13/20<1.
\]
The real signed main therefore establishes the remaining lower-half signs.
The complete $30n$ boundary band supplies all preceding degrees without a gap.
Finally $b_d=b_{9n^2-d}$ supplies the upper half (exchanging residues one and two).

This proves Theorem~\ref{cub:thm:main}. Together with the finite
verification in Part~III, it gives the Cubic assertion in
Theorem~\ref{thm:both}.

\clearpage
\part{Exact finite completion of the Cubic interval}
\section{The exact finite result}
Let
\[
 F_n(q)=\prod_{j=1}^n(1-q^{3j-2})^3(1-q^{3j-1})^3
       =\sum_{d=0}^{9n^2}b_d(n)q^d.
\]
\begin{proposition}[Exact finite verification]\label{cub:prop:finite}
For every integer $1\le n\le500$ and every $0\le d\le9n^2$,
$b_d(n)\ge0$ when $d\equiv0\pmod3$, and $b_d(n)\le0$ otherwise.
In the lower half the only zeros are the $n$ coefficients with
$d\equiv2\pmod3$ and $d\le3n$; all other signs are strict.
\end{proposition}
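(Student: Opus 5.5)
This is a finite statement, so the plan is an exact integer computation of every coefficient $b_d(n)$ for $1\le n\le 500$. The computation follows the Third verifier of Section~\ref{sec:exact}, adapted to modulus three. By reciprocity $b_d(n)=b_{9n^2-d}(n)$, and the reflection exchanges residues one and two. It therefore suffices to decode the lower half $0\le d\le\lfloor 9n^2/2\rfloor$ and to check signs there. The upper-half signs then follow, and the zero statement refers only to the lower half anyway.

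\emph{Exact multiplication and decoding.} We build $F_n$ incrementally from $F_{n-1}$. Each step multiplies by $(1-q^{3n-2})^3(1-q^{3n-1})^3$, which we implement as six successive factors $1-q^k$. Each factor is exact integer shift-and-subtract on a packed base-$B$ representation, truncated at the current prefix length; truncation is legitimate because all exponents are nonnegative. For the digit capacity we use the unconditional bound $|b_d(n)|\le\norm{F_n}_1\le 2^{6n}\le2^{3000}$. With $B\ge4\cdot 2^{3000}$, for instance $B=2^{3008}$, the signed-digit decoder \eqref{eq:decode} recovers every coefficient exactly, including the zero cases, with no inductive sign hypothesis. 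If the whole polynomial is kept, no reflection step is needed.

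\emph{Consistency checks.} We crosscheck with a second, independent implementation that uses ordinary big-integer polynomial arithmetic in a different language or library, together with direct expansion for small $n$ (say $n\le 40$). We also use the roots-of-unity residue sums. Since $P_n(1)=0$ and $P_n(\omega)^3=P_n(\omega^2)^3=3^{n}\cdot(\text{unit})$, the three residue sums $\sum_{d\equiv r}b_d(n)$ are determined in closed form, and each run is compared against these values. For every $n$ the computation records the counts of positive, negative and zero coefficients by residue in the lower half. The required outcome is: no negative count in residue $0$; no positive count in residues $1,2$; zero count $0$ in residues $0,1$; and in residue $2$ exactly the $n$ zeros at $d=2,5,\dots,3n-1$.

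\emph{Structural reason for the zeros.} The zero pattern is consistent with the exact theta identity. For $d\le 3n$ one has $F_n\equiv P_\infty^3$ modulo $q^{3n+1}$, and $P_\infty^3=\mathcal A(Q)-q\mathcal B(Q)$ has no residue-two part. This also serves as a sanity check on the output.

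\emph{Main obstacle.} The hardest part is cost and reliability rather than mathematics. At $n=500$ the polynomial has $2.25\cdot10^6$ coefficients of about $3000$ bits each, and the full run performs about $3000$ factor multiplications. The correctness of the in-place banded subtraction, and its borrow propagation, is what must be certified. I would address this with a second, fully independent implementation together with the residue-sum identities as end-to-end checks.
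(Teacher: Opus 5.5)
Your plan is essentially the paper's proof. Both use exact integer arithmetic for every $n\le500$ with the unconditional capacity bound $|b_d(n)|\le2^{6n}$, decode only the lower half and obtain the upper half by reciprocity, and add a second independent big-integer implementation with residue-sum crosschecks. The paper stores each coefficient as a two's-complement word of width $64\lceil(6n+2)/64\rceil$ bits, and its GMP version applies each cubed factor as one binomial update, rather than using your packed base-$2^{3008}$ digits. If you keep only the lower half, you must extend $F_{n-1}$ to the new cutoff $\lfloor 9n^2/2\rfloor$ by reciprocity before each stage, as the paper does. One slip: $P_n(\omega)=3^n$, so $P_n(\omega)^3=27^n$, and the residue sums to check against are $\tfrac23\,27^n$, $-\tfrac13\,27^n$, $-\tfrac13\,27^n$.
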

The calculation checks every retained coefficient, and reciprocity covers
the other half. The interval $1\le n\le499$ contains
$373\,876\,249$ full coefficient positions. The additional $n=500$ run
provides an overlap with Theorem~\ref{cub:thm:main}.

\section{Why the integer computation is exact}
There are $6n$ factors of the form $1-q^k$ in $F_n$ and its degree is $D_n=9n^2$.
Since $6n$ is even,
\[
 q^{D_n}F_n(1/q)=F_n(q),\qquad b_d(n)=b_{D_n-d}(n).
\]
Hence it suffices to retain $0\le d\le H_n=\lfloor D_n/2\rfloor$.
Before the next update, the coefficients between $H_{n-1}+1$ and $H_n$
are reconstructed from the \emph{old} polynomial's reciprocity, or set to zero
if their degree exceeds $D_{n-1}$. The reflected source index never exceeds
$H_{n-1}$, so the reconstruction does not overwrite its inputs.

Starting with $F_0=1$, the first implementation applies
\[
 F_n=F_{n-1}(1-q^{3n-2})^3(1-q^{3n-1})^3
\]
using six descending updates $c_d\leftarrow c_d-c_{d-k}$.
Descending order retains the previous coefficients on the right-hand side.
Omitting degrees above $H_n$ cannot influence smaller degrees, since all
factor exponents are nonnegative.
Every intermediate stage is a product of at most $6n$ factors $1-q^k$;
its coefficient $\ell^1$ norm is at most $2^{6n}$. Therefore each coefficient
has absolute value at most $2^{6n}$.
The implementation uses
\[
 W_n=64\left\lceil\frac{6n+2}{64}\right\rceil
\]
signed bits. Since $2^{W_n-1}>2^{6n}$, all exact values fit with room to spare.
Unsigned limb subtraction computes reduction modulo $2^{W_n}$; within this
proved range the signed two's-complement representative is the exact integer.
Sign extension when $W_n$ increases preserves the old integers.
All sign decisions use integer limbs; floating-point values occur only in timing diagnostics.

The second implementation uses GNU MP arbitrary integers with sign and magnitude,
and applies each cubic factor in one descending binomial update:
\[
 c_d\leftarrow c_d-3c_{d-k}+3c_{d-2k}-c_{d-3k}.
\]
It independently tests every retained coefficient's sign. It does not use
the first implementation's limb subtraction or six-pass update.
Both inductive algorithms therefore compute the exact coefficients of $F_n$.
Their canonical coefficient hashes are additional comparison and reproducibility
aids; the sign result does not rely on assuming hash injectivity.

\section{Execution evidence and crosschecks}
Both complete runs succeeded for $n=1,\ldots,500$. Each separately inspects
\[
 \sum_{n=1}^{500}(H_n+1)=188,063,250
\]
coefficients, covering
$\sum_{n=1}^{500}(9n^2+1)=376,126,250$ full coefficients.
All 500 canonical whole-half SHA-256 values agree.
The retained rows give the degree, integer width, count, zero pattern and hash for each $n$.

The additional Python crosschecks use arbitrary-size integers:
\begin{itemize}
\item For $n=1,\ldots,20$, recompute every retained coefficient by the distinct identity
\[
 b_0=1,\quad d b_d=-3\sum_{h=1}^d s_n(h)b_{d-h},\quad
 s_n(h)=\sum_{\substack{k\mid h\\1\le k\le3n-1\\3\nmid k}}k.
\]
This follows by logarithmically differentiating $F_n$; every division is
checked for zero remainder. All complete-half hashes agree.
\item For all 500 values of $n$, recompute coefficients through degree 1000
(or the lower-half endpoint if smaller), by ordinary arbitrary-integer
multiplication without any reciprocity reconstruction. All prefix hashes agree.
\item At $n=500$, read the complete binary half generated by the first program,
check its hash, and reconstruct the full residue sums. They equal
\[
 \sum_{d\equiv0(3)}b_d=\frac23\,27^{500},\qquad
 \sum_{d\equiv1(3)}b_d=\sum_{d\equiv2(3)}b_d=-\frac13\,27^{500}.
\]
These identities follow from $F_n(1)=0$ and $F_n(e^{2\pi i/3})=27^n$.
The whole reconstructed polynomial also agrees with its direct product at
$q=2$ modulo $1000000007$, at $q=3$ modulo $998244353$, and at
$q=5$ modulo $1000000009$.
\end{itemize}
There are 1026 retained Python crosschecks in addition to the 500 whole-half
hash comparisons and the two complete sign sweeps. The scalar evaluations
are supplementary checks, not substitutes for the complete sign sweeps.

\section{Reproduction of the finite calculation}
The accompanying package contains the two C++ sources, the GMP header router,
the Python crosschecker, 500 rows from each full run, the crosscheck results,
run logs, summary, source/data hashes and a Chinese handoff.
Compile with C++17 and OpenSSL; the second program additionally requires GMP.
It prefers the installed GMP development header. The fallback binds the
observed Linux LP64 GMP 6.3.0 runtime, checks its 64-bit limb size, and uses
the documented internal layout. The full coefficient dump is an intermediate
file that can be regenerated in a few minutes and is not needed to rerun the
complete sign sweeps.

\begin{corollary}[Cubic finite--analytic completion]\label{cub:cor:full}
The Cubic Borwein sign pattern holds for every positive integer $n$.
\end{corollary}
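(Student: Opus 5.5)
The plan is to split on $n$ at the overlap point $500$, exactly as in Corollary~\ref{cor:full}. For $1\le n\le500$ I invoke Proposition~\ref{cub:prop:finite}. It asserts, for every $0\le d\le9n^2$, that $b_d(n)\ge0$ when $d\equiv0\pmod3$ and $b_d(n)\le0$ otherwise. That is precisely the Cubic sign pattern of Theorem~\ref{thm:both}. The only point to note is that $F_n(q)$ in Part~III is literally $P_n(q)^3$ from \eqref{eq:overview-cubic}, so the coefficients $b_d(n)$ coincide. For $n\ge500$ I invoke Theorem~\ref{cub:thm:main}, which gives the same sign pattern throughout $0\le d\le9n^2$. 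Since $\{1,\dots,500\}\cup\{500,501,\dots\}=\NN$, the two cases together cover every positive integer $n$.

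Two consistency checks are worth recording, though they involve no new estimates. First, the value $n=500$ lies in both ranges, and the two descriptions agree there. Both say that in the lower half the zeros occur exactly at the residue-two degrees $d\le3n$, with strict signs elsewhere. Second, the upper half is handled identically in both sources via $b_d=b_{9n^2-d}$. Under this reflection residues $1$ and $2$ are exchanged and residue $0$ is fixed. Both residues $1$ and $2$ carry the nonpositive sign, so no separate argument is needed.

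There is no real analytic obstacle at this stage; the corollary is a bookkeeping step. The step requiring care is confirming that the hypotheses of Theorem~\ref{cub:thm:main} are genuinely met for every $n\ge500$. That is, its boundary band, compact, intermediate and large peak budgets \eqref{cub:eq:compactbudget}--\eqref{cub:eq:largebudget}, and the complementary contour bounds were all stated with threshold $500$, not $546$. The finite computation must also cover $n=500$ itself, so that no gap remains at the junction. Both facts are asserted explicitly in the preceding sections, so the proof reduces to citing the two results.
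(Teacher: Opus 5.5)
Your proposal is correct and matches the paper's proof: Proposition~\ref{cub:prop:finite} covers $1\le n\le500$, Theorem~\ref{cub:thm:main} covers $n\ge500$, and reciprocity $b_d=b_{9n^2-d}$ (which fixes residue $0$ and exchanges residues $1$ and $2$) handles the upper half. Your remarks on the overlap at $n=500$ and on the identification $F_n=P_n^3$ are accurate, but the argument needs nothing beyond those two citations.
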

\begin{proof}
Proposition~\ref{cub:prop:finite} covers $1\le n\le500$.
Theorem~\ref{cub:thm:main} covers $n\ge500$. Reciprocity covers
both halves in each range.
\end{proof}
\appendix

\section{Rational certificates and their reproduction}\label{app:certificates}
This appendix specifies the finite arithmetic used in the uniform bounds.
The certificates are comparisons of rational numbers. Their role is to
check constants after the continuous analytic inequalities have been proved;
sampling the parameter domains is not used to prove a contour bound.

\subsection{Elementary bounds}
For $x\ge0$, define
\[
 E_L(x)=\sum_{j=0}^{L}\frac{x^j}{j!},\qquad
 E_L^+(x)=E_L(x)+\frac{x^{L+1}}{(L+1)!}
                        \frac1{1-x/(L+2)}.
\]
If $L+2>x$, positive terms and the ratio of consecutive omitted terms give
$E_L(x)\le e^x\le E_L^+(x)$, with strict inequalities when $x>0$.
The programs normally use $L=50$; the exact term count is retained in the
source for each invocation. Larger arguments are evaluated either with
sufficiently many terms or with products of such bounds.

For $u\ge1$, put $v=(u-1)/(u+1)$ and define
\[
 L_L(u)=2\sum_{j=0}^{L-1}\frac{v^{2j+1}}{2j+1},\qquad
 L_L^+(u)=L_L(u)+\frac{2v^{2L+1}}{(2L+1)(1-v^2)}.
\]
Then $L_L(u)\le\log u\le L_L^+(u)$.
For $0\le x<1$, an upper bound for the dilogarithm is
\[
 D_L^+(x)=\sum_{j=1}^{L}\frac{x^j}{j^2}
             +\frac{x^{L+1}}{(L+1)^2(1-x)}.
\]
The lower bound is its finite sum without the last term.
All substitutions in these formulas are rational.
The inequalities $157/50<\pi<22/7$ can be obtained from
Machin's identity $\pi=16\arctan(1/5)-4\arctan(1/239)$ using alternating
series bounds. Comparisons involving square roots are squared when both
sides are positive. In particular,
\[
 2\pi>25/4
 \quad\Longrightarrow\quad
 \frac8{3\sqrt{2\pi}}<\frac{16}{15},
\]
which is the normalized first-moment comparison used in the intermediate
range.

For $0\le s\le6$, a cosine upper bound used in the angular sums is
\[
 C^+(s)=\sum_{j=0}^{14}\frac{(-1)^j s^{2j}}{(2j)!}.
\]
For $0\le s\le22/7$, the corresponding sine lower bound is
\[
 S^-(s)=\sum_{j=0}^{11}\frac{(-1)^j s^{2j+1}}{(2j+1)!}.
\]
The first omitted terms have the required signs, and the omitted tails
decrease in absolute value on these intervals. Thus the alternating-tail
comparison is valid even when the initial terms of the series do not decrease.
Rounding down by $10^{-k}$ means $10^{-k}\lfloor10^k x\rfloor$;
rounding up means $10^{-k}\lceil10^k x\rceil$. Both are exact rational
operations.

\subsection{Polynomial positivity}
For a rational polynomial $p$ of degree at most $D$, define
\[
 \mathcal M_{a,b,D}(p)(s)=(b+s)^D
                 p\left(\frac{a+s}{b+s}\right).
\]
This is again a polynomial. If all its coefficients are nonnegative, then
$p(X)\ge0$ for $a/b\le X<1$ when $b>a\ge0$.
The value at $1$ follows by continuity.
The transformation with $(a,b)=(0,1)$ covers $[0,1]$.
This criterion is used only as a sufficient condition; no necessity is
assumed.

Let $N_m(X)=X^m\sum_{j=0}^{4-m}X^{2j}$, so $w_m=N_m/D(X)^2$.
The polynomial $D^2-5N_1$, transformed with $(a,b,D)=(0,1,8)$, has
coefficient list
\[
 (1,5,10,10,10,20,30,20,5).
\]
It proves $w_1\le1/5$.
The derivative numerator factors as
\[
 N_1'D-2N_1D'=(1-X)q_1(X),\qquad
 q_1=1-4X^3-7X^4-7X^5-4X^6+X^9.
\]
Since $q_1'<0$ on $(0,1)$, $q_1(0)>0$, and $q_1(1)<0$,
$w_1$ has a single maximum. For $m=2,3,4$ the transformed derivative
numerators $N_m'D-2N_mD'$ have nonnegative coefficients, proving their
monotonicity.
For the lower bounds on $[1/13,1]$, the transformed polynomials
$200N_1-13D^2$ and $200N_2-D^2$, with $(a,b,D)=(1,13,8)$, have lists
\begin{align*}
 &(178912747,9987744200,5321678020,1259534360,170589970,\\[-2pt]
 &\hspace{23mm}14415800,777700,26600,475),\\
 &(13762319,1664693800,1729338740,608459320,111390890,\\[-2pt]
 &\hspace{23mm}12103000,814100,32200,575).
\end{align*}
All are positive. The stronger weight bounds on $[1/4,1]$ and $[3/5,1]$
follow from monotonicity for $m\ge2$, and endpoint comparison for the
single-maximum function $w_1$.
The same programs expand the variance polynomials in
\eqref{eq:K2}--\eqref{eq:K3} and perform the cyclotomic computations for
the constant and linear terms of $\mathcal G_3,\mathcal G_4$ and the
Bernoulli endpoint cancellations. Arithmetic in $\QQ(\zeta)$ is reduction
modulo $1+X+X^2+X^3+X^4$, using exact rational coefficients.

\subsection{Integral sums and interval coverage}
The following finite sums implement the comparisons used in the proof.
Each is a one-sided sum with an analytic monotonicity or endpoint argument.
They are not midpoint approximations to the desired inequalities.
\begin{enumerate}
\item For \eqref{eq:forty}, the forty radial intervals run from
$1/10$ to $1/2$. Evaluate the lower bound for $A$ at the right endpoint,
and the decreasing upper bound for the other term at the left endpoint.
\item For the far arc, use the fifty intervals $[j/100,(j+1)/100]$,
the lower radial bounds stated after \eqref{eq:far-log}, and the upper
value of $H$ at the left endpoint. Every difference exceeds $\gamma$.
\item The primitive-root short-angle bound uses twenty $u$ intervals.
On each interval the minimum weight occurs at an endpoint, and the maximum
cosine occurs at an endpoint of the angle interval in $[0,6]$.
Round weights down to $10^{-7}$ and use $C^+$ for the cosine.
The long-angle mass uses a hundred-cell lower sum for the decreasing
function $W(e^{-5u/2})$; its result exceeds $0.23108294$.
\item To bound $C_0$, divide each of its four integrals into forty cells.
For cell $j$ of phase $r$, enclose the angle between
\[
 \frac{157}{50}\left(\frac r5+\frac{j}{400}\right)
 \quad\text{and}\quad
 \frac{22}{7}\left(\frac r5+\frac{j+1}{400}\right).
\]
Sine is positive and concave on the enclosing interval, so the minimum
of $S^-$ at its two endpoints is a lower bound for the sine throughout.
Use this positive minimum as a denominator and round each upper summand
up to $10^{-9}$. The resulting bound is less than $1.85439429<19/10$.
\end{enumerate}
The elementary endpoint comparisons for the concave functions of $|y|$
in Section~\ref{sec:minor}, and the exponential endpoint estimates in
Section~\ref{sec:assembly}, use the same rational bounds.

\subsection{Reproduction and limits of the certificates}
From the supplementary archive root, run
\begin{verbatim}
python3 certificates/check_constants.py
python3 certificates/paper_foundations.py
python3 certificates/check_boundary_repair.py
\end{verbatim}
The first command reproduces seven groups of exact constant checks,
including the global prefactor, boundary Gaussian estimates, joint
variation, complementary-contour losses, and the four budgets.
The second checks the auxiliary arithmetic, first-moment normalization,
polynomial bounds, and supplied finite record. The third checks the boundary
prefactors $60$ and $27$ and their use in the original error budgets.
None of these commands launches the long coefficient sweep. Python optimization must be disabled, since assertions are part of
the checks; the entry points reject execution with \texttt{-O} or
\texttt{-OO}.

For traceability, the supporting modules retain their original numerical
file identifiers in the archive. These are implementation names only;
the proof in the article does not rely on a chronological research log.
The Cubic comparisons can be reproduced with
\begin{verbatim}
python3 cubic_analytic/certificate.py
python3 cubic_finite/audit_saved.py
\end{verbatim}
The first reproduces the $3582$ exact comparisons for the Cubic analytic
estimates, including the small coefficient convolutions and theta
rectangles. The second verifies the saved finite rows and $1026$
crosschecks. Complete Cubic coefficient sweeps can be rerun with
\texttt{cubic\_finite/run\_all.sh}.
Machine-readable results and a SHA-256 manifest are included.
The manifest identifies the exact supplied files and does not certify the
analytic inferences. Reproduction of the rational comparisons and review
of the continuous inequalities are separate mathematical tasks.

\section{Cubic summation and primitive-root identities}\label{app:cubic-identities}
The identities in this appendix use the notation of Part~II.
They supply the finite and infinite-tail expansions used in the local peaks.

\subsection{Shifted Euler--Maclaurin summation}
Let $0\le a\le1$, and let $f$ have four continuous derivatives on $[0,1]$.
Repeated integration by parts with periodic Bernoulli polynomials gives
\begin{align*}
 \sum_{j=0}^{n-1}f((j+a)/n)
 &=n\int_0^1 f(u)\,du
   +\sum_{k=1}^3\frac{B_k(a)}{k!n^{k-1}}
       \bigl(f^{(k-1)}(1)-f^{(k-1)}(0)\bigr)+E,\\
 |E|&\le\frac{|f'''(0)|+|f'''(1)|+
                  \int_0^1|f''''(u)|\,du}{24n^3}.
\end{align*}
The estimate follows by retaining the fourth endpoint term and bounding
both that term and the periodic fourth Bernoulli remainder by
$\sup_{0\le x\le1}|B_4(x)|\le1$.
If the expansion stops after $B_2$, the same argument with $B_3$ gives
\[
 |E|\le\frac{|f''(0)|+|f''(1)|+\int_0^1|f'''(u)|\,du}{6n^2}.
\]
On $[1,\infty)$, when $f$ and its derivatives decay absolutely, the
remainder after $B_3$ is at most
\[
 \frac{|f'''(1)|+\int_1^\infty|f''''(u)|\,du}{24n^3}.
\]
The formulas follow first on finite intervals and then by absolute
convergence. Applying them with $a=1-s/3$ to
$f_s(u)=\log(1-\omega^{-hs}e^{-3zu})$, for $s=1,2$, gives the endpoint
factor $\lambda_h$, the common correction $C/n$, and
\[
 J_h(z)=\frac{z^2}{18}\left\{
 \frac{\omega^{-h}e^{-3z}}{(1-\omega^{-h}e^{-3z})^2}
 -\frac{\omega^{-2h}e^{-3z}}{(1-\omega^{-2h}e^{-3z})^2}\right\},
 \qquad J_2=-J_1.
\]
The explicit derivative and folding estimates for the resulting remainder
are given in Part~II.

\subsection{Primitive-root eta identity}
Put $\mathscr P(w)=(w;w)_\infty$ and
$P_\infty(q)=\mathscr P(q)/\mathscr P(q^3)$. For $\Rea z>0$ and $h=1,2$,
\begin{equation}\label{cub:eq:eta-exact}
 P_\infty(\omega^he^{-z/n})
 =e^{-\pi\ii s(h,3)}e^{cn/z-z/(12n)}
 \frac{\mathscr P(\omega^{-h^{-1}}e^{-\kappa n/z})}
      {\mathscr P(e^{-3\kappa n/z})},
\end{equation}
where $h^{-1}$ is taken modulo three,
$s(1,3)=1/18$, $s(2,3)=-1/18$, $c=\pi^2/27$, and $\kappa=12c$.
To prove it, write $w=z/n$ and
$\tau=h/3+\ii w/(2\pi)$, so that
$P_\infty(q)=q^{1/12}\eta(\tau)/\eta(3\tau)$.
Choose a determinant-one matrix with lower row $(3,-h)$ and upper-left
entry $a\equiv-h^{-1}\pmod3$. Its transformed argument is
$a/3+2\pi\ii/(9w)$. Apply the eta transformation
\eqref{eq:eta-transform} to the numerator and the inversion formula to
$\eta(3\tau)$. The square roots cancel. The remaining phase is
$e^{-\pi\ii s(h,3)}$, the real exponential is
$e^{\pi^2/(27w)-w/12}$, and the two transformed product variables are
those in \eqref{cub:eq:eta-exact}.

Let $u_*=e^{-\kappa n\Rea(1/z)}$. When $u_*\le1/100$, the logarithm
of the transformed product quotient in \eqref{cub:eq:eta-exact} has
modulus less than $4u_*$. Indeed, expanding each logarithmic product gives
the bound
$u_* /(1-u_*)^2+u_*^3/(1-u_*^3)^2<4u_*$.
For $P_n=P_\infty T_n$, the exact tail logarithm is
\[
 \log T_n(\omega^he^{-z/n})
 =\sum_{\ell\ge1}\frac{e^{-3z\ell}}{\ell}
 \frac{\omega^{h\ell}e^{-z\ell/n}+\omega^{2h\ell}e^{-2z\ell/n}}
      {1-e^{-3z\ell/n}}.
\]
This follows directly from the omitted factors, and converges absolutely
for $\Rea z>0$. Combining it with the shifted tail summation formula
gives the expansions used in the intermediate and large peaks.

\end{document}